\newtheorem{theorem}{Theorem}[section]
\theoremstyle{plain}
\newtheorem{definition}{Definition}[section]
\newtheorem{lemma}{Lemma}[section]
\numberwithin{equation}{section}
\begin{document}
\title[Strongly damped wave equation]{Strongly damped wave equation with
exponential nonlinearities}
\author{\ Azer Khanmamedov\ }
\address{{\small Department of Mathematics,} {\small Faculty of Science,
Hacettepe University, Beytepe 06800}, {\small Ankara, Turkey}}
\email{azer@hacettepe.edu.tr; azer\_khan@yahoo.com }
\subjclass[2000]{ 35L05, 35B41}
\keywords{wave equation, global attractor}

\begin{abstract}
In this paper, we study the initial boundary value problem for two
dimensional strongly damped wave equation with exponentially growing source
and damping terms. We first show the well-posedness of this problem and then
prove the existence of the global attractor in $(H_{0}^{1}(\Omega )\cap
L^{\infty }(\Omega ))\times L^{2}(\Omega )$.
\end{abstract}

\maketitle


\section{Introduction}

The paper is devoted to the study of the strongly damped wave equation%
\begin{equation}
w_{tt}-\Delta w_{t}+f(w_{t})-\Delta w+g(w)=h\text{.}  \tag{1.1}
\end{equation}%
The semilinear strongly damped wave equations are quite interesting from a
physical viewpoint. For example, they arise in the modeling of the flow of
viscoelastic fluids (see [1, 2]) as well as in the theory of heat conduction
(see [3, 4]). One of the most important problems regarding these equations
is to analyse their long-time dynamics in terms of attractors. The
attractors for such equations have intensively been studied by many authors
under different types of hypotheses. We refer to [5-7] and the references
therein for strongly damped wave equations with the linear damping and
subcritical source term. In the critical source term case, the existence of
the attractors for strongly damped wave equations with the linear damping
was proved in \cite{8} and later in \cite{9}. The regularity of the
attractor, established in [8, 9], was proved in \cite{10}, for the critical
source term case. Later in \cite{11}, it was shown that the attractor of the
strongly damped wave equation with the critical source term, indeed,
attracts every bounded subset of $H_{0}^{1}(\Omega )\times L^{2}(\Omega )$
in the norm of $H_{0}^{1}(\Omega )\times H_{0}^{1}(\Omega )$. In \cite{12},
the authors proved the existence and regularity of the uniform attractor for
the nonautonomous strongly damped wave with the critical source term. The
attractors for the strongly damped wave equations with the source term like
polynomial of arbitrary degree were investigated in \cite{13}. In the
nonlinear subcritical damping term case, the attractors for the strongly
damped wave equations were studied in \cite{14} and \cite{15}. In \cite{16},
the authors investigated the attractors of the abstract second order
evolution equation with the damping term depending both on displacement and
velocity. In particular, the results obtained in \cite{16} can be applied to
the strongly damped wave equation with subcritical nonlinearities.
Attractors for\ strongly damped wave equations with the critical
displacement dependent damping and source terms were established in \cite{17}%
. Recently, in \cite{18} the authors have proved the existence of the
attractors for the equation (1.1), when the source term $g$ is subcritical
and nonmonotone damping term $f$ is critical. Later in \cite{19}, they have
improved this result for the case when both of $f$ and $g$ are critical and $%
\underset{x\in R}{\inf }$ $f^{\prime }(x)>-\lambda _{1}$, where $\lambda
_{1} $ is the first eigenvalue of Laplace operator.

The goal of this paper is to study the two dimensional equation (1.1) with
exponentially growing damping and source terms, in the space $%
(H_{0}^{1}(\Omega )\cap L^{\infty }(\Omega ))\times L^{2}(\Omega )$ instead
of the usual phase space\ $H_{0}^{1}(\Omega )\times L^{2}(\Omega )$. Hence,
in comparison with the papers mentioned above, we additionally need the $%
L^{\infty }(\Omega )-$estimate for the weak solutions. This estimate is also
important for the uniqueness and continuous dependence on initial data, in
the case when there is no growth condition on the derivative of the source
term. To achieve $L^{\infty }$ regularity of the weak solutions, we reduce
the strongly damped wave equation to the heat equation and use the
regularity property of the latter (see Lemma 3.2-3.3).

The paper is organized as follows. In the next section, we state the problem
and the main results. In section 3, we first prove the existence of the weak
solution and then establish its $L^{\infty }$ regularity, which plays a key
role for the uniqueness of the weak solution. After proving the uniqueness,
we show $L^{2}$ regularity for $w_{tt}$ and then continuous dependence of
the weak solution on initial data. In section 4, we first establish the
dissipativity, in particular the global boundedness of solutions in $%
L^{\infty }(\Omega )$ uniformly with respect to the initial data from a
bounded subset of $(H_{0}^{1}(\Omega )\cap L^{\infty }(\Omega ))\times
L^{2}(\Omega )$, and then prove asymptotic compactness which together with
the existence of the strict Lyapunov function lead to the existence of the
global attractor. Finally, in the last section, we give some auxiliary
lemmas.

\section{Statement of the problem and results}

We consider the following initial-boundary value problem:%
\begin{equation}
\left\{
\begin{array}{c}
w_{tt}-\Delta w_{t}+f(w_{t})-\Delta w+g(w)=h(x)\text{\ \ \ \ in \ }(0,\infty
)\times \Omega ,\text{ \ } \\
w=0\text{ \ \ \ \ \ \ \ \ \ \ \ \ \ \ \ \ \ \ \ \ \ \ \ \ \ \ \ \ \ \ \ \ \
\ \ \ \ \ \ \ \ \ \ \ \ \ \ on \ }(0,\infty )\times \partial \Omega , \\
w(0,\cdot )=w_{0}\text{ },\text{ \ \ \ \ \ \ \ \ \ \ }w_{t}(0,\cdot )=w_{1}%
\text{\ \ \ \ \ \ \ \ \ \ in \ }\Omega ,\text{ \ \ \ \ \ \ \ \ \ \ \ \ \ }%
\end{array}%
\right.  \tag{2.1}
\end{equation}%
where $\Omega \subset R^{2}$ is a bounded domain with smooth boundary, $h\in
L^{2}(\Omega )$ and the nonlinear functions $f,$ $g$ satisfy the following
conditions:%
\begin{equation}
\bullet \text{ }f\in C^{1}(R)\text{, \ }f(0)=0\text{,\ }\underset{s\in R}{%
\inf }f^{\prime }(s)>-\lambda _{{\small 1}}\text{, \ \ \ \ \ \ \ \ \ \ \ \ \
\ \ \ \ \ \ \ \ \ \ \ \ \ \ \ \ \ \ \ \ \ \ \ \ \ \ \ \ \ \ \ \ \ \ \ \ \ \
\ \ \ \ \ \ }  \tag{2.2}
\end{equation}%
\begin{equation}
\bullet \text{ }g\in C^{1}(R)\text{, }\underset{\left\vert s\right\vert
\rightarrow \infty }{\liminf }\frac{g(s)}{s}>-\lambda _{{\small 1}}\text{, \
}\left\vert g(s)\right\vert \leq c(1+e^{\left\vert s\right\vert ^{\gamma }})%
\text{, \ \ }\forall s\in R\text{, \ \ for some }\gamma \in \lbrack 1,2)%
\text{,}  \tag{2.3}
\end{equation}%
\begin{equation}
\bullet \text{ }\int\limits_{-\infty }^{\infty }\frac{\left\vert f^{\prime
}(s)\right\vert }{s\left( f(s)+\lambda _{{\small 1}}s\right) +1}ds<\infty
\text{,\ \ \ \ \ }\left\vert f(-s)\right\vert \leq c(1+\left\vert
f(s)\right\vert )\text{, \ \ \ }\forall s\in R\text{,\ \ \ \ \ \ \ \ \ \ \ \
\ \ \ \ \ \ \ }  \tag{2.4}
\end{equation}%
where $\lambda _{{\small 1}}=\underset{\varphi \in H_{0}^{1}(\Omega
),\varphi \neq 0}{\inf }\frac{\left\Vert \nabla \varphi \right\Vert
_{L^{2}(\Omega )}^{2}}{\left\Vert \varphi \right\Vert _{L^{2}(\Omega )}^{2}}$%
.\

It is easy to verify that for $\alpha \in \lbrack 0,1)$ the functions $%
f(s)=se^{\left\vert s\right\vert ^{\alpha }}$ satisfies the conditions (2.2)
and (2.4).

\begin{definition}
The function $w\in C([0,T];H_{0}^{1}(\Omega ))$ satisfying $w_{t}\in
C_{s}(0,T;L^{2}(\Omega ))\cap $\newline
$L^{2}(0,T;H_{0}^{1}(\Omega ))$, $f(w_{t})\in L^{1}(0,T;L^{1}(\Omega ))$, $%
g(w)\in L^{1}(0,T;L^{1}(\Omega ))$, $w(0,x)=w_{0}(x)$, $w_{t}(0,x)=w_{1}(x)$
and the equation%
\begin{equation*}
\frac{d}{dt}\left\langle w_{t},v\right\rangle +\left\langle \nabla
w_{t},v\right\rangle +\left\langle \nabla w,v\right\rangle +\left\langle
f(w_{t}),v\right\rangle +\left\langle g(w),v\right\rangle =\left\langle
h,v\right\rangle
\end{equation*}%
in the sense of distributions on $(0,T)$, for all $v\in H_{0}^{1}(\Omega
)\cap L^{\infty }(\Omega )$, is called the weak solution to the problem
(2.1) on $\left[ 0,T\right] \times \Omega $, where $C_{s}(0,T;L^{2}(\Omega
))=\left\{ u:u\in L^{\infty }(0,T;L^{2}(\Omega )),\left\langle u,\varphi
\right\rangle \in C[0,T]\text{, for}\right. $\newline
$\left. \text{every }\varphi \in L^{2}(\Omega )\right\} $ and $\left\langle
\psi ,\varphi \right\rangle =$ $\int\limits_{\Omega }\psi (x)\varphi (x)dx$.
\end{definition}

Our first result is the following well-posedness theorem:

\begin{theorem}
Assume that the conditions (2.2)-(2.4) are satisfied. Then for every $T>0$
and $(w_{0},w_{1})\in (H_{0}^{1}(\Omega )\cap L^{\infty }(\Omega ))\times
L^{2}(\Omega )$, the problem (2.1) admits a unique weak solution which
satisfies
\begin{equation*}
w\in C([0,T];H_{0}^{1}(\Omega )\cap L^{\infty }(\Omega )),\text{ }w_{t}\in
C([0,T];L^{2}(\Omega ))\cap L^{2}(0,T;H_{0}^{1}(\Omega )),\text{ }w_{tt}\in
L_{loc}^{2}(0,T;L^{2}(\Omega ))
\end{equation*}%
and the inequalities%
\begin{equation}
\left\{
\begin{array}{c}
\left\Vert w(t)\right\Vert _{H_{0}^{1}(\Omega )}+\left\Vert
w_{t}(t)\right\Vert _{L^{2}(\Omega )}+\underset{0}{\overset{t}{\int }}%
\left\Vert \nabla w_{t}(\tau )\right\Vert _{L^{2}(\Omega )}^{2}d\tau \\
+\underset{0}{\overset{t}{\int }}\left\langle f(w_{t}(\tau )),w_{t}(\tau
)\right\rangle d\tau \leq c_{1}(r(w_{0},w_{1}))\text{, \ \ \ \ \ \ \ \ } \\
\left\Vert w(t)\right\Vert _{L^{\infty }(\Omega )}\leq c_{2}(T,\text{ }%
r(w_{0},w_{1}))\text{ \ \ \ \ \ \ \ \ \ \ \ \ \ \ \ \ \ \ \ \ }%
\end{array}%
,\right. \text{ }\forall t\in \lbrack 0,T]\text{.}  \tag{2.5}
\end{equation}%
Moreover, if $\ v\in $ $C([0,T];H_{0}^{1}(\Omega )\cap L^{\infty }(\Omega
))\cap C^{1}([0,T];L^{2}(\Omega ))\cap W^{1,2}(0,T;H_{0}^{1}(\Omega ))\cap
W_{loc}^{2,2}(0,T;L^{2}(\Omega ))$ is also a weak solution to (2.1) with
initial data $(v_{0},v_{1})\in (H_{0}^{1}(\Omega )\cap L^{\infty }(\Omega
))\times L^{2}(\Omega )$, then
\begin{equation*}
\left\Vert w(t)-v(t)\right\Vert _{H_{0}^{1}(\Omega )}+\left\Vert
w_{t}(t)-v_{t}(t)\right\Vert _{L^{2}(\Omega )}\leq
\end{equation*}%
\begin{equation}
\leq c_{3}(T,\widetilde{r})\left( \left\Vert w_{0}-v_{0}\right\Vert
_{H_{0}^{1}(\Omega )}+\left\Vert w_{1}-v_{1}\right\Vert _{L^{2}(\Omega
)}\right) \text{, \ }\forall t\in \lbrack 0,T]\text{,}  \tag{2.6}
\end{equation}%
where $c_{1}:R_{+}\rightarrow R_{+}$ and $c_{i}:R_{+}\times R_{+}\rightarrow
R_{+}$ ($i=2,$ $3$) are nondecreasing functions with respect to each
variable, $r(w_{0},w_{1})=\left\Vert (w_{0},w_{1})\right\Vert
_{(H_{0}^{1}(\Omega )\cap L^{\infty }(\Omega ))\times L^{2}(\Omega )}$ and $%
\widetilde{r}=\max \left\{ r(w_{0},w_{1}),r(v_{0},v_{1})\right\} $.
\end{theorem}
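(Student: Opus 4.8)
The plan is to prove existence by a Faedo--Galerkin scheme, derive the two a priori bounds in (2.5) --- the energy bound first and then the delicate $L^\infty$ bound via a parabolic reduction --- and finally obtain uniqueness and the Lipschitz estimate (2.6) by an energy argument for the difference of two solutions.

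First I would set up the Galerkin approximation $w^n$ in the span of the first $n$ Dirichlet eigenfunctions of $-\Delta$ and test the approximate equation with $w^n_t$. Writing $G(s)=\int_0^s g$, this produces
\[
\tfrac{1}{2}\tfrac{d}{dt}\Big(\|w^n_t\|_{L^2}^2+\|\nabla w^n\|_{L^2}^2+2\!\int_\Omega G(w^n)\Big)+\|\nabla w^n_t\|_{L^2}^2+\langle f(w^n_t),w^n_t\rangle=\langle h,w^n_t\rangle .
\]
The condition $\inf f'>-\lambda_1$ gives $sf(s)\ge(-\lambda_1+\varepsilon)s^2$, so by Poincar\'e $\|\nabla w^n_t\|^2+\langle f(w^n_t),w^n_t\rangle\ge\varepsilon\|w^n_t\|^2\ge0$, while $\liminf_{|s|\to\infty}g(s)/s>-\lambda_1$ makes $\|\nabla w^n\|^2+2\int G(w^n)$ bound $\|\nabla w^n\|^2$ from below up to an additive constant. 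This yields the first inequality of (2.5) uniformly in $n$. Passing to the limit is routine for the linear terms; for the nonlinear terms I would use that the strong damping gives strong $L^2$ compactness of $w^n_t$ (hence a.e. convergence, so $f(w^n_t)\to f(w_t)$, with $L^1$ convergence secured by a de la Vall\'ee-Poussin uniform-integrability argument based on the bound on $\langle f(w^n_t),w^n_t\rangle$), and that $w^n\to w$ strongly in $L^2$, combined with the subcritical exponential growth $\gamma<2$ and Trudinger--Moser to pass to the limit in $g(w^n)$.

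The heart of the proof is the $L^\infty$ bound, i.e.\ the second inequality of (2.5), which I expect to be the main obstacle and which presumably corresponds to Lemmas 3.2--3.3. The idea is to reduce the equation to a heat equation. Setting $\psi:=w_t-\Delta w$, the problem is equivalent to the coupled system
\[
w_t-\Delta w=\psi,\qquad \psi_t+\psi=w_t-f(w_t)-g(w)+h,
\]
with $w(0)=w_0$ and $\psi(0)=w_1-\Delta w_0$. Solving the ODE for $\psi$ and inserting it into Duhamel's formula $w(t)=e^{t\Delta}w_0+\int_0^t e^{(t-s)\Delta}\psi(s)\,ds$, I would estimate $\|w(t)\|_{L^\infty}$ using the two--dimensional smoothing bound $\|e^{\tau\Delta}\|_{L^p\to L^\infty}\le C\tau^{-1/p}$. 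The decisive point is that the subcriticality $\gamma<2$, together with the growth restriction (2.4) on $f$, places $g(w)$ and $f(w_t)$ in $L^p(\Omega)$ for every finite $p$, with norms controlled solely by the energy bound already obtained, by the Trudinger--Moser inequality. The only genuinely rough contribution is $\Delta w_0$ inside $\psi(0)$; this is handled by integrating by parts in $s$ in the Duhamel integral, which trades $\Delta e^{(t-s)\Delta}w_0$ for $\tfrac{d}{ds}e^{(t-s)\Delta}w_0$ and leaves only terms of the form $e^{(t-s)\Delta}w_0$, bounded in $L^\infty$ because $w_0\in L^\infty$ and the heat semigroup is an $L^\infty$ contraction. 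Closing the estimate requires checking that the Trudinger--Moser bounds are integrable against the kernel $(t-s)^{-1/p}$ over $[0,T]$, which is where the quantitative subcritical inequalities (the auxiliary lemmas of Section 5) enter.

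Finally, for uniqueness and (2.6) I would take two solutions $w,v$, set $z=w-v$, and test the difference equation with $z_t$; the extra regularity $w_{tt},v_{tt}\in L^2_{loc}$ required of the comparison solution makes this pairing rigorous. As in the energy step, $\langle f(w_t)-f(v_t),z_t\rangle\ge(\inf f')\|z_t\|^2$ is absorbed by $\|\nabla z_t\|^2\ge\lambda_1\|z_t\|^2$ since $\inf f'>-\lambda_1$. For the source difference I would write $g(w)-g(v)=g'(\eta)z$ and crucially use the uniform $L^\infty$ bound (2.5): since $w$ and $v$ are bounded in $L^\infty$ by $c_2(T,\widetilde{r})$, the factor $g'(\eta)$ is bounded by a constant depending only on $\widetilde{r}$, so $|\langle g(w)-g(v),z_t\rangle|\le C(\widetilde{r})\|\nabla z\|\,\|z_t\|$ with no growth hypothesis on $g'$. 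Gronwall's inequality applied to $\|z_t\|^2+\|\nabla z\|^2$ then gives (2.6), and uniqueness follows on taking $w_0=v_0$, $w_1=v_1$. The intermediate claim $w_{tt}\in L^2_{loc}(0,T;L^2)$ is obtained by a parabolic bootstrap within the heat reduction above.
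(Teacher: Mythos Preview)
Your overall architecture matches the paper --- Galerkin existence, energy bound, $L^\infty$ bound via a heat reduction, then uniqueness and continuous dependence --- but there is a real gap in the $L^\infty$ step and a logical circularity in the uniqueness step.

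For the $L^\infty$ bound you assert that condition (2.4) together with Trudinger--Moser places $f(w_t)$ in $L^p(\Omega)$ for every finite $p$ with norms controlled by the energy. This is not justified. The energy estimate gives only $f_1(w_t)w_t\in L^1((0,T)\times\Omega)$ and $w_t\in L^2(0,T;H_0^1(\Omega))$; Trudinger--Moser needs a uniform-in-time $H_0^1$ bound, which $w_t$ does not enjoy, so no $L^p$ control of $f(w_t)$ with $p>1$ follows. With only $L^1$ available, the two-dimensional smoothing $\|e^{\tau\Delta}\|_{L^1\to L^\infty}\sim\tau^{-1}$ is not integrable near $\tau=0$, and your Duhamel argument breaks down precisely on this term. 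The paper's cure is genuinely different: it splits $w=v+u$, with $v$ driven by $(1+\lambda_1)w_t-g(w)+h\in L^\infty(0,T;L^2)$ (here T--M on $w\in L^\infty_tH_0^1$ and $\gamma<2$ do give $g(w)\in L^\infty_tL^2_x$, so $L^2\to H^s$ smoothing suffices; this is Lemma~3.2), and $u$ driven by $-f_1(w_t)$. For $u$ the paper uses a \emph{pointwise} heat-kernel comparison (Lemma~A.3) and then splits the convolution according to the level sets $\{f_1^{-1}(-\varepsilon(\tau-s)^{-\alpha})\le w_t\le f_1^{-1}(\varepsilon(\tau-s)^{-\alpha})\}$; on the complement one divides by the level to replace $|f_1(w_t)|$ by $f_1(w_t)w_t$, and the remaining time integral $\int\frac{d\sigma}{\sigma\, f_1^{-1}(\pm\varepsilon\sigma^{-\alpha})}$ is shown to be finite by the substitution $\nu=f_1^{-1}(\cdot)$, which is \emph{exactly} where the first part of (2.4) is used. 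Condition (2.4) is not a growth bound exploitable by Trudinger--Moser; it is tailored to make this change of variables converge.

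There is also a circularity in your uniqueness argument. You test the difference equation with $z_t$, which needs $w_{tt},v_{tt}\in L^2_{\mathrm{loc}}$; but in the paper this regularity is established \emph{after} uniqueness, by passing to the limit in the Galerkin estimate on $\int\!\!\int\|w_{ntt}\|^2$ --- legitimate only once one knows the Galerkin limit is the unique weak solution. At the uniqueness stage a weak solution in the sense of Definition~2.1 need not have $w_{tt}\in L^2_{\mathrm{loc}}$, so the pairing $\langle z_{tt},z_t\rangle$ is unavailable. The paper instead tests with the difference quotients $\frac{u(t\pm h)-u(t)}{2h}$ and uses Lemmas~A.4--A.5 to recover the correct sign of the $f$-term in the limit $h\searrow0$. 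Your argument for (2.6) itself is fine, since there both $w$ and $v$ are already assumed to lie in $W^{2,2}_{\mathrm{loc}}(0,T;L^2)$.
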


Hence, by Theorem 2.1, the solution operator $%
S(t)(w_{0},w_{1})=(w(t),w_{t}(t))$ of the problem (2.1) generates a weakly
continuous (in the sense, if $\varphi _{n}\rightarrow \varphi $ strongly
then $S(t)\varphi _{n}\rightarrow S(t)\varphi $ weakly star)$\ $semigroup in
$(H_{0}^{1}(\Omega )\cap L^{\infty }(\Omega ))\times L^{2}(\Omega )$.

\begin{definition}
(\cite{20})\textit{\ Let }$\{V(t)\}_{{\small t\geq 0}}$\textit{\ be a
semigroup on a metric space }$(X,$\textit{\ }$d).$\textit{\ A compact set }$%
A\subset X$\textit{\ is called a global attractor for the semigroup }$%
\left\{ V(t)\right\} _{t\geq 0}$\textit{\ iff}
\end{definition}

$\bullet $\textit{\ }$A$\textit{\ is invariant, i.e. }$V(t)A=A,$\textit{\ }$%
\forall t\geq 0;$

$\bullet $\textit{\ }$\underset{t\rightarrow \infty }{\lim }$\textit{\ }$%
\underset{v\in B}{\sup }$\textit{\ }$\underset{u\in \mathcal{A}}{\inf }%
d(V(t)v,u)=0$\textit{\ \ for each bounded set }$B\subset X.$\textit{\newline
}

Our second result is the following theorem:

\begin{theorem}
Under conditions (2.2)-(2.4) the semigroup $\left\{ S(t)\right\} _{t\geq 0}$
generated by the problem (2.1) possesses a global attractor in $%
(H_{0}^{1}(\Omega )\cap L^{\infty }(\Omega ))\times L^{2}(\Omega )$.
\end{theorem}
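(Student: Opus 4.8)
The plan is to establish the existence of a global attractor by verifying the standard criteria for a gradient (Lyapunov) system: the semigroup must be asymptotically compact and possess a bounded absorbing set, together with a strict Lyapunov functional whose structure forces the attractor to coincide with the unstable manifold of the set of equilibria. Let me think about the components.

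Let me sketch the key steps.

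First, dissipativity. I need a bounded absorbing set in $(H_0^1(\Omega)\cap L^\infty(\Omega))\times L^2(\Omega)$. The energy estimate (2.5) already controls the $H_0^1 \times L^2$ part with a constant depending only on $r(w_0,w_1)$. To get genuine dissipativity (an absorbing set rather than mere boundedness), I'd use the standard multiplier technique: test the equation with $w_t + \varepsilon w$ for small $\varepsilon>0$, exploiting the sign condition $\inf f' > -\lambda_1$ and the coercivity $\liminf_{|s|\to\infty} g(s)/s > -\lambda_1$ to produce a differential inequality $\frac{d}{dt}E + \delta E \le C$ for the energy functional, giving exponential decay into a bounded set in $H_0^1\times L^2$. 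The more delicate part is the uniform $L^\infty$ bound: the paper announces that one obtains global boundedness in $L^\infty(\Omega)$ uniform over initial data in a bounded set, and I would import this via the heat-equation reduction used for Lemma 3.2--3.3, treating $w$ as solving a heat equation with forcing, and using the smoothing to bound $\|w(t)\|_{L^\infty}$ uniformly for $t$ large.

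Second, the Lyapunov functional. I would introduce
\begin{equation*}
\mathcal{L}(w,w_t)=\tfrac{1}{2}\|w_t\|_{L^2}^2+\tfrac{1}{2}\|\nabla w\|_{L^2}^2+\int_\Omega G(w)\,dx-\langle h,w\rangle,
\end{equation*}
where $G(s)=\int_0^s g(\tau)\,d\tau$. Differentiating along trajectories and using the equation gives $\frac{d}{dt}\mathcal{L}=-\|\nabla w_t\|_{L^2}^2-\langle f(w_t),w_t\rangle\le 0$, where the sign condition (2.2) guarantees the damping term is nonnegative up to control by $\|\nabla w_t\|^2$. This $\mathcal{L}$ is continuous on the phase space and decreases strictly unless $w_t\equiv 0$, in which case the trajectory is an equilibrium; this is precisely the strict Lyapunov property, and its existence rules out nontrivial minimal attractors other than connections between equilibria.

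Third, and this is where the main obstacle lies, asymptotic compactness. Because the source term grows exponentially and the phase space includes the $L^\infty$ component, I cannot rely on compact Sobolev embeddings in the critical/supercritical regime directly; instead I expect to use the energy (or ``contractive function'') method of Khanmamedov, decomposing the difference of two trajectories and showing that the non-compact part of the energy is asymptotically contractive. Concretely, for two solutions I would write the equation for their difference, multiply by the velocity difference, integrate, and isolate the problematic nonlinear terms $\langle g(w)-g(\bar w), w_t-\bar w_t\rangle$ and $\langle f(w_t)-f(\bar w_t), w_t-\bar w_t\rangle$; the monotone-plus-lower-order structure of $f$ and the local Lipschitz control of $g$ on the $L^\infty$-bounded absorbing set, combined with the already-established strong $L^2(0,T;H_0^1)$ regularity of $w_t$ and the uniform $L^\infty$ bound, should let me verify the contractivity condition and thereby asymptotic compactness. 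The exponential growth of $g$ is handled precisely because trajectories are uniformly bounded in $L^\infty$, so $g(w)$ and $g'(w)$ are effectively bounded along the dynamics.

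Finally, I would assemble these ingredients: by the abstract theorem for gradient systems (as in \cite{20}), a dissipative, asymptotically compact gradient semigroup with a bounded set of equilibria admits a global attractor coinciding with the unstable manifold of the equilibria. The boundedness of the equilibrium set follows from the elliptic problem $-\Delta w+g(w)=h$ together with the coercivity in (2.3) and the uniform $L^\infty$ estimate. The hardest technical point will be the asymptotic compactness verification, since controlling the exponential nonlinearity requires carefully combining the uniform $L^\infty$ bound with the energy method rather than any single compactness embedding.
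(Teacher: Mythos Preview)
Your outline captures the gradient-system strategy and matches the paper's proof in its overall architecture: a strict Lyapunov functional (exactly the $\mathcal{L}$ you wrote), a boundedness/dissipativity lemma, and asymptotic compactness, assembled via an abstract result (the paper cites \cite[Corollary 2.29]{25} rather than \cite{20}). Your treatment of the $H_0^1\times L^2$ asymptotic compactness via the energy method on the difference of two trajectories is essentially what the paper does in Step~1 of Lemma~4.2.

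However, there is a genuine gap: you never address asymptotic compactness in the $L^\infty(\Omega)$ component of the phase space. The attractor must be compact in $(H_0^1(\Omega)\cap L^\infty(\Omega))\times L^2(\Omega)$, so you must show that for any bounded sequence $\varphi_m$ and $t_m\to\infty$, the first components of $S(t_m)\varphi_m$ are relatively compact in $L^\infty(\Omega)$, not just in $H_0^1(\Omega)$. Your contractive-function argument yields only the energy-norm compactness; the uniform $L^\infty$ bound you import is necessary but not sufficient, since a bounded set in $L^\infty$ is far from compact there. The paper devotes a separate and rather delicate Step~2 to this: it tests the equation against a truncated velocity to show that $\int\!\!\int_{\{|w_{mt}|>M\}} f_1(w_{mt})w_{mt}$ is uniformly small for $M$ large (estimate (4.12)), then decomposes $w_m^\tau=v_{\varepsilon m}^\tau+u_{\varepsilon m}^\tau$ in the spirit of Lemmas~3.2--3.3, where $v_{\varepsilon m}^\tau$ lands in a bounded set of $H^s(\Omega)$ for $s\in(1,2)$ (hence compact in $L^\infty$) and $u_{\varepsilon m}^\tau$ is uniformly small in $L^\infty$. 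An iteration then places $w_m(t_m)$ in an arbitrarily small $L^\infty$-neighbourhood of a compact set. Without something of this kind, the attractor you construct would only be compact in $H_0^1\times L^2$.

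A secondary point: your proposed route to an absorbing set via the multiplier $w_t+\varepsilon w$ requires making sense of $\langle f(w_t),w\rangle$, which is delicate since $f$ has no a priori upper growth bound (only the integral condition (2.4)). The paper sidesteps this entirely: its Lemma~4.1 establishes only uniform boundedness of orbits (not an absorbing ball with exponential entry), obtained from (2.5) for the $H_0^1\times L^2$ part and an iteration $\|w(t+s)\|_{L^\infty}\le e^{-t}\|w(s)\|_{L^\infty}+c_B$ for the $L^\infty$ part. For a gradient system with a strict Lyapunov function, this boundedness together with asymptotic compactness already suffices for the abstract theorem, so no genuine absorbing set is needed.
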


\section{Well-posedness}

We start with the following existence result:

\begin{lemma}
Assume that the conditions (2.2)-(2.4) are satisfied. Then for every $%
(w_{0},w_{1})\in (H_{0}^{1}(\Omega )\cap L^{\infty }(\Omega ))\times
L^{2}(\Omega )$, the problem (2.1) admits a weak solution on $\left[ 0,T%
\right] \times \Omega $ such that
\begin{equation}
\underset{t\searrow 0}{\lim }\left\Vert w_{t}(t)-w_{1}\right\Vert
_{L^{2}(\Omega )}=0\text{.}  \tag{3.1}
\end{equation}
\end{lemma}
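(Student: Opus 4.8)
The plan is to prove existence of a weak solution via the Faedo–Galerkin method, which is the standard approach for such nonlinear evolution equations. Let $\{e_j\}_{j=1}^\infty$ be the eigenfunctions of $-\Delta$ on $H_0^1(\Omega)$, forming an orthonormal basis of $L^2(\Omega)$, and set $V_n = \mathrm{span}\{e_1,\dots,e_n\}$. I would seek approximate solutions $w_n(t) = \sum_{j=1}^n a_j^n(t) e_j$ solving the projected system $\langle w_{n,tt},e_j\rangle + \langle \nabla w_{n,t},\nabla e_j\rangle + \langle \nabla w_n,\nabla e_j\rangle + \langle f(w_{n,t}),e_j\rangle + \langle g(w_n),e_j\rangle = \langle h,e_j\rangle$ for $j=1,\dots,n$, with initial data $w_n(0),w_{n,t}(0)$ being the $V_n$-projections of $(w_0,w_1)$, chosen so that they converge to $(w_0,w_1)$ in the appropriate norms. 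This is a system of ODEs with locally Lipschitz right-hand side (since $f,g\in C^1$), so local existence on $[0,T_n)$ follows from Picard–Lindel\"of.

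The key step is to derive the a priori energy estimate that both guarantees global existence of the Galerkin system and yields the uniform bounds needed to pass to the limit. Testing with $w_{n,t}$ (i.e. multiplying the $j$-th equation by $(a_j^n)'$ and summing) produces the energy identity whose time-derivative involves $\frac{d}{dt}\big(\tfrac12\|w_{n,t}\|_{L^2}^2 + \tfrac12\|\nabla w_n\|_{L^2}^2 + \langle G(w_n),1\rangle\big) + \|\nabla w_{n,t}\|_{L^2}^2 + \langle f(w_{n,t}),w_{n,t}\rangle = \langle h,w_{n,t}\rangle$, where $G$ is the primitive of $g$. The condition $\inf_s f'(s) > -\lambda_1$ in (2.2) together with $\liminf_{|s|\to\infty} g(s)/s > -\lambda_1$ in (2.3) is exactly what controls the sign-indefinite terms: these let me bound $\langle G(w_n),1\rangle$ from below and absorb the damping and the forcing via Poincar\'e's inequality and Young's inequality, giving a bound uniform in $n$ and $t$ of the form appearing in the first line of (2.5), including the integrated dissipation terms $\int_0^t\|\nabla w_{n,t}\|^2\,d\tau$ and $\int_0^t\langle f(w_{n,t}),w_{n,t}\rangle\,d\tau$.

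With these bounds in hand I can extract subsequences converging weakly-$*$ in $L^\infty(0,T;H_0^1)$ for $w_n$, weakly-$*$ in $L^\infty(0,T;L^2)$ and weakly in $L^2(0,T;H_0^1)$ for $w_{n,t}$. Standard compactness (Aubin–Lions, using that $w_{n,t}$ is bounded in $L^2(0,T;H_0^1)$ and $w_{n,tt}$ is controlled in a negative-order space) gives strong convergence of $w_n$ in $C([0,T];L^2)$ and, crucially, of $w_{n,t}$ in $L^2(0,T;L^2)$, hence a.e.\ convergence of $w_{n,t}$ along a further subsequence. The main obstacle is then the passage to the limit in the nonlinear terms $f(w_{n,t})$ and $g(w_n)$: because $f$ and $g$ grow exponentially, one cannot rely on polynomial growth and the usual $L^p$ dominated-convergence arguments. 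The standard device here is a lemma of Strauss type — from the a.e.\ convergence $w_{n,t}\to w_t$ together with the uniform $L^1$ bound on $f(w_{n,t})w_{n,t}$ provided by the energy estimate, one concludes $f(w_{n,t})\to f(w_t)$ in $L^1((0,T)\times\Omega)$ (and similarly $g(w_n)\to g(w)$ using the a.e.\ convergence of $w_n$ and the growth bound (2.3)). This is where the integrability hypotheses are genuinely used, and it is the delicate part of the argument.

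Finally, having identified the limits I pass to the limit in the weak formulation tested against each fixed $e_j$, and then against arbitrary $v\in H_0^1(\Omega)\cap L^\infty(\Omega)$ by density, obtaining the weak-solution identity of Definition 2.1; the regularity $w\in C([0,T];H_0^1)$ and $w_t\in C_s(0,T;L^2)$ follows from the bounds and a standard continuity argument. The initial-data attainment $w(0)=w_0$ is immediate, while the strong right-continuity (3.1), $\lim_{t\searrow 0}\|w_t(t)-w_1\|_{L^2}=0$, is obtained by combining the weak continuity $w_t(t)\rightharpoonup w_1$ with lower semicontinuity of the norm and an upper bound on $\limsup_{t\searrow 0}\|w_t(t)\|_{L^2}$ extracted from the energy inequality as $t\searrow 0$, which upgrades weak to strong convergence.
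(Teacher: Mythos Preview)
Your approach is essentially the same as the paper's: Galerkin approximation, energy identity from testing with $w_{n,t}$, uniform bounds from (2.2)--(2.3), Aubin--Lions compactness, Strauss-type lemma for $f(w_{n,t})$, and the weak--convergence--plus--energy--inequality argument for (3.1). However, there are two related gaps.

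First, your treatment of $g(w_n)$ does not go through as written. The Strauss lemma you invoke for $f$ relies on the uniform bound $\int_0^T\langle f(w_{n,t}),w_{n,t}\rangle\,dt\le C$ coming from the dissipation term in the energy identity; there is no analogous bound on $\int g(w_n)w_n$, and the growth condition (2.3) alone is not enough since $g$ grows exponentially. The paper handles this differently: it uses the two-dimensional Trudinger--Moser embedding $H_0^1(\Omega)\hookrightarrow L^*_\Psi(\Omega)$ with $\Psi(z)=e^{z^2}$, which together with $\gamma<2$ in (2.3) yields a uniform bound $\|g(w_n(t))\|_{L^2(\Omega)}\le c$. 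Combined with a.e.\ convergence, this gives $g(w_n)\rightharpoonup g(w)$ weakly in $L^2(0,T;L^2)$, which suffices. This is the key 2D ingredient you are missing.

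Second, your choice of initial data as the ordinary $V_n$-projections is problematic: spectral projections onto eigenfunctions need not be uniformly bounded in $L^\infty$, yet you need control of $\langle G(w_n(0)),1\rangle$ (with $G$ exponentially growing) to launch the energy estimate, and later you need $\langle G(w_n(0)),1\rangle\to\langle G(w_0),1\rangle$ to derive the energy inequality used for (3.1). The paper proves a separate approximation lemma (Lemma~A.1) producing $w_n(0)\to w_0$ in $H_0^1$ with $\sup_n\|w_n(0)\|_{L^\infty}\le\|w_0\|_{L^\infty}$, so that Lebesgue's dominated convergence applies. You should either use this, or invoke Trudinger--Moser again to obtain the needed uniform integrability.
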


\begin{proof}
By using Galerkin's method, let us to construct approximate solutions of
(2.1). Let $\left\{ \varphi _{j}\right\} _{j=1}^{\infty }$ be a basis of $%
H^{2}(\Omega )\cap H_{0}^{1}(\Omega )$ consisting of the eigenfunctions of
the Dirichlet problem%
\begin{equation*}
\left\{
\begin{array}{c}
-\Delta \varphi _{j}=\lambda _{j}\varphi _{j}\text{ \ in }\Omega \\
\varphi _{j}=0\text{ \ on }\partial \Omega%
\end{array}%
\right. ,\text{ }j=1,2,...\text{ .}
\end{equation*}%
According to Lemma A.1 and the embedding $H^{2}(\Omega )\subset C(\overline{%
\Omega })$, for $(w_{0},w_{1})\in (H_{0}^{1}(\Omega )\cap L^{\infty }(\Omega
))\times L^{2}(\Omega )$, there exist the sequences $\left\{ \alpha
_{n}\right\} _{n=1}^{\infty }$ and $\left\{ \beta _{n}\right\}
_{n=1}^{\infty }$ such that%
\begin{equation}
\left\{
\begin{array}{c}
\sum\limits_{k=1}^{n}\alpha _{k}\varphi _{k}\rightarrow w_{0}\text{ in }%
H_{0}^{1}(\Omega )\text{, \ }\sum\limits_{k=1}^{n}\beta _{k}\varphi
_{k}\rightarrow w_{1}\text{ in }L^{2}(\Omega )\text{ as }n\rightarrow \infty
\text{,} \\
\underset{n}{\sup }\left\Vert \sum\limits_{k=1}^{n}\alpha _{k}\varphi
_{k}\right\Vert _{C(\overline{\Omega })}\leq \left\Vert w_{0}\right\Vert
_{L^{\infty }(\Omega )}\text{.}%
\end{array}%
\right.  \tag{3.2}
\end{equation}%
We define the approximate solution $w_{n}(t)$ in the form%
\begin{equation*}
w_{n}(t)=\sum\limits_{k=1}^{n}c_{nk}(t)\varphi _{k}\text{,}
\end{equation*}%
where $c_{nk}(t)$ are determined by the system of second order ordinary
differential equations
\begin{equation}
\begin{array}{c}
\left\langle \sum\limits_{k=1}^{n}c_{nk}^{\prime \prime }(t)\varphi
_{k},\varphi _{j}\right\rangle +\left\langle
\sum\limits_{k=1}^{n}c_{nk}^{\prime }(t)\nabla \varphi _{k},\nabla \varphi
_{j}\right\rangle +\left\langle f\left( \sum\limits_{k=1}^{n}c_{nk}^{\prime
}(t)\varphi _{k}\right) ,\varphi _{j}\right\rangle \\
+\left\langle \sum\limits_{k=1}^{n}c_{nk}(t)\nabla \varphi _{k},\nabla
\varphi _{j}\right\rangle +\left\langle g\left(
\sum\limits_{k=1}^{n}c_{nk}(t)\varphi _{k}\right) ,\varphi _{j}\right\rangle
=\left\langle h,\varphi _{j}\right\rangle \text{, \ }j=1,2,...,n,%
\end{array}
\tag{3.3}
\end{equation}%
with the initial data
\begin{equation}
c_{nj}(0)=\alpha _{j}\text{, \ \ }c_{nj}^{\prime }(0)=\beta _{j}\text{, \ \
\ }j=1,2,...,n\text{.}  \tag{3.4}
\end{equation}%
Since $det(\left\langle \varphi _{j},\varphi _{k}\right\rangle )\neq 0$ and
the nonlinear functions $f$ and $g$ are continuous, by the Peano existence
theorem, there exists at least one local solution to (3.3)-(3.4) in the
interval $[0,T_{n})$. Hence this allows to construct the approximate
solution $w_{n}(t)$. Multiplying the equation (3.3)$_{j}$ by the function $%
c_{nj}^{\prime }(t)$, summing from $j=1$ to $n$ and integrating over $(0,t),$
we have%
\begin{equation*}
\frac{1}{2}\left\Vert w_{nt}(t)\right\Vert _{L^{2}(\Omega )}^{2}+\frac{1}{2}%
\left\Vert \nabla w_{n}(t)\right\Vert _{L^{2}(\Omega )}^{2}+\left\langle
G(w_{n}(t)),1\right\rangle +\underset{0}{\overset{t}{\int }}\left\Vert
\nabla w_{nt}(\tau )\right\Vert _{L^{2}(\Omega )}^{2}d\tau
\end{equation*}%
\begin{equation*}
+\underset{0}{\overset{t}{\int }}\left\langle f(w_{nt}(\tau )),w_{nt}(\tau
)\right\rangle d\tau =\frac{1}{2}\left\Vert w_{nt}(0)\right\Vert
_{L^{2}(\Omega )}^{2}+\left\langle h,w_{n}(t)\right\rangle
\end{equation*}%
\begin{equation}
+\frac{1}{2}\left\Vert \nabla w_{n}(0)\right\Vert _{L^{2}(\Omega
)}^{2}+\left\langle G(w_{n}(0)),1\right\rangle -\left\langle
h,w_{n}(0)\right\rangle ,\text{ \ \ }\forall t\in \lbrack 0,T_{m}),
\tag{3.5}
\end{equation}%
where $G(w)=\underset{0}{\overset{w}{\int }}g(u)du$. Taking into account
(2.2), (2.3) and (3.2) in (3.5), we get%
\begin{equation*}
\left\{
\begin{array}{c}
\left\Vert w_{n}(t)\right\Vert _{H^{1}(\Omega )}^{2}\leq c, \\
\left\Vert w_{nt}(t)\right\Vert _{L^{2}(\Omega )}^{2}\leq c, \\
\underset{0}{\overset{t}{\int }}\left\Vert w_{nt}(s)\right\Vert
_{H^{1}(\Omega )}^{2}ds\leq c, \\
\underset{0}{\overset{t}{\int }}\left\langle f(w_{nt}(\tau ))+\lambda
_{1}w_{nt}(\tau ),w_{nt}(\tau )\right\rangle d\tau \leq c%
\end{array}%
\right. ,\text{ \ }\forall t\in \lbrack 0,T_{n})\text{.}
\end{equation*}%
where the constant $c$ depends on $\left\Vert (w_{0},w_{1})\right\Vert
_{(H_{0}^{1}(\Omega )\cap L^{\infty }(\Omega ))\times L^{2}(\Omega )}$ and
is independent of $n$ and $t$. Hence, we can extend the approximate solution
to the interval $[0,\infty )$ and
\begin{equation*}
\left\Vert w_{n}(t)\right\Vert _{H^{1}(\Omega )}^{2}+\left\Vert
w_{nt}(t)\right\Vert _{L^{2}(\Omega )}^{2}+\underset{0}{\overset{t}{\int }}%
\left\Vert w_{nt}(s)\right\Vert _{H^{1}(\Omega )}^{2}ds
\end{equation*}%
\begin{equation}
+\underset{0}{\overset{t}{\int }}\left\langle f_{1}(w_{nt}(\tau
)),w_{nt}(\tau )\right\rangle d\tau \leq 4c,\text{ \ }\forall t\geq 0\text{,}
\tag{3.6}
\end{equation}%
where $f_{1}(s)=f(s)+\lambda _{1}s$. Taking into account the embedding $%
H_{0}^{1}(\Omega )\subset L_{\Psi }^{\ast }(\Omega )$ (see \cite{21}), by
the last inequality, we have
\begin{equation*}
\underset{\Omega }{\int }\Psi (kw_{n}(t,x))dx\leq c_{1},\text{ }\forall
t\geq 0\text{,}
\end{equation*}%
for some $k>0$, where $L_{\Psi }^{\ast }(\Omega )$ is the Orlicz space with
the $N$-function $\Psi (z)=e^{z^{2}}$. The last inequality together with
(2.3) gives us
\begin{equation}
\left\Vert g(w_{n}(t))\right\Vert _{L^{2}(\Omega )}\leq c_{2}\text{, \ }%
\forall t\geq 0\text{,}  \tag{3.7}
\end{equation}%
where the constant $c_{2}$, as $c$ and $c_{1}$, depends on $\left\Vert
(w_{0},w_{1})\right\Vert _{(H_{0}^{1}(\Omega )\cap L^{\infty }(\Omega
))\times L^{2}(\Omega )}$ and is independent of $n$.

Now, multiplying the equation (3.3)$_{j}$, by the function $\frac{1}{\lambda
_{j}^{2}}c_{nj}^{\prime \prime }(t)$ and then summing from $j=1$ to $n$, we
get%
\begin{equation*}
\left\Vert (-\Delta )^{-1}w_{ntt}(t)\right\Vert _{L^{2}(\Omega )}^{2}
\end{equation*}%
\begin{equation*}
=\left\langle w_{nt}(t)+w_{n}(t)-(-\Delta )^{-1}f(w_{nt}(t))-(-\Delta
)^{-1}g(w_{n}(t))+(-\Delta )^{-1}h,(-\Delta )^{-1}w_{ntt}(t)\right\rangle
\text{,}
\end{equation*}%
and consequently%
\begin{equation*}
\left\Vert w_{ntt}(t)\right\Vert _{V}\leq c_{3}\left( \left\Vert
w_{nt}(t)\right\Vert _{L^{2}(\Omega )}+\left\Vert w_{n}(t)\right\Vert
_{L^{2}(\Omega )}\right.
\end{equation*}%
\begin{equation}
+\left. \left\Vert h\right\Vert _{L^{2}(\Omega )}+\left\Vert
f(w_{nt}(t))\right\Vert _{V}+\left\Vert g(w_{n}(t))\right\Vert
_{L^{2}(\Omega )}\right) \text{, \ }\forall t\geq 0\text{,}  \tag{3.8}
\end{equation}%
where $V$ is the dual of $H^{2}(\Omega )\cap H_{0}^{1}(\Omega )$. Since, by
the embedding $L^{1}(\Omega )\subset V$,%
\begin{equation*}
\left\Vert f(w_{nt}(t))\right\Vert _{V}\leq c_{4}\left\Vert
f(w_{nt}(t))\right\Vert _{L^{1}(\Omega )}\leq c_{4}\left\Vert f\right\Vert
_{C[-1,1]}mes(\Omega )
\end{equation*}%
\begin{equation*}
+c_{4}\underset{\left\{ x:x\in \Omega ,\text{ }\left\vert wt,x)\right\vert
>1\right\} }{\int }\left\vert f(w_{nt}(t,x))\right\vert \left\vert
w_{nt}(t,x)\right\vert dx\leq c_{4}\left\Vert f\right\Vert
_{C[-1,1]}mes(\Omega )
\end{equation*}%
\begin{equation*}
+c_{4}\left\langle f_{1}(w_{nt}(t)),w_{nt}(t)\right\rangle +c_{5}\left\Vert
w_{nt}(t)\right\Vert _{L^{2}(\Omega )}^{2}
\end{equation*}%
\begin{equation*}
\leq c_{6}(1+\left\Vert w_{nt}(t)\right\Vert _{L^{2}(\Omega
)}^{2}+\left\langle f_{1}(w_{nt}(t)),w_{nt}(t)\right\rangle )
\end{equation*}%
by (3.6)-(3.8), we have%
\begin{equation}
\underset{0}{\overset{1}{\int }}\left\Vert w_{ntt}(s)\right\Vert _{V}ds\leq
c_{8}\text{,}  \tag{3.9}
\end{equation}%
where $c_{8}$ is independent of $n$. Thus it follows from the estimates
(3.6) and (3.9) that the sequences $\left\{ w_{n}\right\} _{n=1}^{\infty }$,
$\left\{ w_{nt}\right\} _{n=1}^{\infty }$ and $\left\{ w_{ntt}\right\}
_{n=1}^{\infty }$ are bounded in $L^{\infty }(0,T;H_{0}^{1}(\Omega ))$, $%
L^{\infty }(0,T;L^{2}(\Omega ))\cap L^{2}(0,T;H_{0}^{1}(\Omega ))$ and $%
L^{1}(0,T;V)$, respectively. Applying the compact embedding theorem (see
\cite[Corollary 4]{22}), we obtain that the sequences $\left\{ w_{n}\right\}
_{n=1}^{\infty }$and $\left\{ w_{nt}\right\} _{n=1}^{\infty }$ are
precompact in $L^{2}(0,T;L^{2}(\Omega ))$. As a consequence, there exists a
subsequence of $\left\{ w_{n}\right\} _{n=1}^{\infty }$ (still denoted by $%
\left\{ w_{n}\right\} _{n=1}^{\infty }$) and a function $w\in L^{\infty
}(0,T;H_{0}^{1}(\Omega ))$ with $w_{t}\in $ $L^{\infty }(0,T;L^{2}(\Omega
))\cap L^{2}(0,T;H_{0}^{1}(\Omega ))$ such that%
\begin{equation}
\left\{
\begin{array}{c}
w_{n}\rightarrow w\text{ \ weakly star in }L^{\infty }(0,T;H_{0}^{1}(\Omega
))\text{,} \\
w_{n}\rightarrow w\text{ \ strongly in }C(\left[ 0,T\right] ;L^{2}(\Omega ))%
\text{,} \\
w_{nt}\rightarrow w_{t}\text{ \ weakly star in }L^{\infty }(0,T;L^{2}(\Omega
))\text{,} \\
w_{nt}\rightarrow w_{t}\text{ \ weakly in }L^{2}(0,T;H_{0}^{1}(\Omega ))%
\text{,} \\
w_{nt}\rightarrow w_{t}\text{ \ strongly in }L^{2}(0,T;L^{2}(\Omega ))\text{,%
} \\
w_{n}\rightarrow w\text{ \ a.e. in }(0,T)\times \Omega \text{,} \\
w_{nt}\rightarrow w_{t}\text{ \ a.e. in }(0,T)\times \Omega \text{.}%
\end{array}%
\right.  \tag{3.10}
\end{equation}%
Now, applying Lemma A.2, by (3.6) and (3.10)$_{7}$, we get%
\begin{equation*}
f_{1}(w_{nt})\rightarrow f_{1}(w_{t})\text{ strongly in }L^{1}(0,T;L^{1}(%
\Omega ))\text{,}
\end{equation*}%
which, together with (3.10)$_{5}$, implies%
\begin{equation}
f(w_{nt})\rightarrow f(w_{t})\text{ strongly in }L^{1}(0,T;L^{1}(\Omega ))%
\text{.}  \tag{3.11}
\end{equation}%
Also, by (3.7) and (3.10)$_{6}$, we find%
\begin{equation}
g(w_{n})\rightarrow g(w)\text{ weakly in }L^{2}(0,T;L^{2}(\Omega ))\text{.}
\tag{3.12}
\end{equation}%
Thus, considering the last approximations and passing to the limit in (3.3),
we obtain%
\begin{equation}
\frac{d}{dt}\left\langle w_{t},\varphi _{j}\right\rangle +\left\langle
\nabla w_{t},\nabla \varphi _{j}\right\rangle +\left\langle \nabla w,\nabla
\varphi _{j}\right\rangle +\left\langle f(w_{t}),\varphi _{j}\right\rangle
+\left\langle g(w),\varphi _{j}\right\rangle =\left\langle h,\varphi
_{j}\right\rangle ,\text{ \ }j=1,2,...\text{.}  \tag{3.13}
\end{equation}%
in the sense of distributions on $(0,T)$. Since $\left\{ \varphi
_{j}\right\} _{j=1}^{\infty }$ is base in $H^{2}(\Omega )\cap
H_{0}^{1}(\Omega )$, by (3.2), for every $\upsilon \in H_{0}^{1}(\Omega
)\cap L^{\infty }(\Omega )$, there exists $\left\{ \alpha _{nj}\right\}
_{j=1}^{k_{n}}$ such that%
\begin{equation*}
\sum\limits_{j=1}^{k_{n}}\alpha _{nj}\varphi _{j}\rightarrow v\text{ in }%
H_{0}^{1}(\Omega )\text{, as }n\rightarrow \infty \text{, \ \ and }\underset{%
n}{\sup }\left\Vert \sum\limits_{j=1}^{k_{n}}\alpha _{nj}\varphi
_{j}\right\Vert _{C(\overline{\Omega })}\leq \left\Vert v\right\Vert
_{L^{\infty }(\Omega )}\text{,}
\end{equation*}%
which, together with (3.13), yields that%
\begin{equation*}
\frac{d}{dt}\left\langle w_{t},v\right\rangle +\left\langle \nabla
w_{t},\nabla v\right\rangle +\left\langle \nabla w,\nabla v\right\rangle
+\left\langle f(w_{t}),v\right\rangle +\left\langle g(w),v\right\rangle
=\left\langle h,v\right\rangle \text{,}
\end{equation*}%
in the sense of distributions on $(0,T)$, for every $\upsilon \in
H_{0}^{1}(\Omega )\cap L^{\infty }(\Omega )$. From this equation, it is easy
to see that $w_{tt}\in L^{1}(0,T;H^{-1}(\Omega )+L^{1}(\Omega ))$ and
\begin{equation*}
w_{tt}-\Delta w_{t}-\Delta w+f(w_{t})+g(w)=h\text{ \ in }L^{1}(0,T;H^{-1}(%
\Omega )+L^{1}(\Omega ))\text{.}
\end{equation*}%
Now, by $w\in L^{\infty }(0,T;H_{0}^{1}(\Omega ))$, $w_{t}\in
L^{2}(0,T;H_{0}^{1}(\Omega ))$ and $w_{tt}\in L^{1}(0,T;H^{-1}(\Omega
)+L^{1}(\Omega ))$, it follows that $w\in C(\left[ 0,T\right]
;H_{0}^{1}(\Omega ))$ and $w_{t}\in C(\left[ 0,T\right] ;H^{-1}(\Omega
)+L^{1}(\Omega ))$. Further, applying \cite[Lemma 8.1, p. 275]{23}, by $%
w_{t}\in L^{\infty }(0,T;L^{2}(\Omega ))$ and $w_{t}\in C(\left[ 0,T\right]
;H^{-1}(\Omega )+L^{1}(\Omega ))$, we have $w_{t}\in C_{s}(0,T;L^{2}(\Omega
))$.

By (3.10)$_{1}$ and (3.10)$_{4}$, it follows that
\begin{equation*}
w_{n}\rightarrow w\text{ \ weakly in }C(\left[ 0,T\right] ;H_{0}^{1}(\Omega
))
\end{equation*}%
and particularly
\begin{equation*}
w_{n}(0)\rightarrow w(0)\text{ \ weakly in }H_{0}^{1}(\Omega )\text{,}
\end{equation*}%
which, together with (3.2)$_{1}$, yields that $w(0)=w_{0}$. Also, by (3.3),
(3.10)-(3.12) and (3.13), we find
\begin{equation*}
\frac{d}{dt}\left\langle w_{nt},\varphi _{j}\right\rangle \rightarrow \frac{d%
}{dt}\left\langle w_{t},\varphi _{j}\right\rangle \text{ weakly in }%
L^{1}(0,T),\text{ }j=1,2,...\text{.}
\end{equation*}%
The last approximation, using (3.10)$_{3}$, gives us%
\begin{equation*}
\left\langle w_{nt}(0),\varphi _{j}\right\rangle \rightarrow \left\langle
w_{t}(0),\varphi _{j}\right\rangle ,\text{ \ }j=1,2,...\text{.}
\end{equation*}%
Hence, by (3.2)$_{1}$ and (3.4), we have $w_{t}(0)=w_{1}$. Thus, the
function $w\in C(\left[ 0,T\right] ;H_{0}^{1}(\Omega ))$ with $w_{t}\in
C_{s}(0,T;L^{2}(\Omega ))$ is a weak solution to (2.1) on $\left[ 0,T\right]
\times \Omega $.

Now, let us prove (3.1). By (3.2), (3.4) and Lebesgue's convergence theorem,
it is easy to see that
\begin{equation*}
\underset{n\rightarrow \infty }{\lim }\left\langle
F(w_{n}(0)),1\right\rangle =\left\langle F(w_{0}),1\right\rangle \text{.}
\end{equation*}%
Also, by (2.2), (2.3), (3.10) and Fatou's lemma, we have%
\begin{equation*}
\underset{n\rightarrow \infty }{\lim \inf }\left\langle
G(w_{n}(t)),1\right\rangle \geq \left\langle G(w(t)),1\right\rangle
\end{equation*}%
and
\begin{equation*}
\underset{n\rightarrow \infty }{\lim \inf }\underset{0}{\overset{t}{\int }}%
\left\langle f(w_{nt}(\tau )),w_{nt}(\tau )\right\rangle d\tau \geq \underset%
{0}{\overset{t}{\int }}\left\langle f(w_{t}(\tau )),w_{t}(\tau
)\right\rangle d\tau \text{.}
\end{equation*}%
Hence, passing to the limit in (3.5) and taking the weak lower
semi-continuity of the norm into consideration leads to
\begin{equation*}
\frac{1}{2}\left\Vert w_{t}(t)\right\Vert _{L^{2}(\Omega )}^{2}+\frac{1}{2}%
\left\Vert \nabla w(t)\right\Vert _{L^{2}(\Omega )}^{2}+\left\langle
G(w(t)),1\right\rangle +\underset{0}{\overset{t}{\int }}\left\Vert \nabla
w_{t}(\tau )\right\Vert _{L^{2}(\Omega )}^{2}d\tau
\end{equation*}%
\begin{equation*}
+\underset{0}{\overset{t}{\int }}\left\langle f(w_{t}(\tau )),w_{t}(\tau
)\right\rangle d\tau \leq \frac{1}{2}\left\Vert w_{1}\right\Vert
_{L^{2}(\Omega )}^{2}+\left\langle h,w(t)\right\rangle
\end{equation*}%
\begin{equation}
+\frac{1}{2}\left\Vert \nabla w_{0}\right\Vert _{L^{2}(\Omega
)}^{2}+\left\langle G(w_{0}),1\right\rangle -\left\langle
h,w_{0}\right\rangle ,\text{ \ \ }\forall t\in \lbrack 0,T]\text{.}
\tag{3.14}
\end{equation}%
Since $w\in C(\left[ 0,T\right] ;H_{0}^{1}(\Omega ))$, passing to the limit
in (3.14) as $t\searrow 0$, we get%
\begin{equation*}
\underset{n\rightarrow \infty }{\lim \inf }\left\Vert w_{t}(t)\right\Vert
_{L^{2}(\Omega )}^{2}\leq \left\Vert w_{1}\right\Vert _{L^{2}(\Omega )}^{2}%
\text{,}
\end{equation*}%
which, together with $w_{t}\in C_{s}(0,T;L^{2}(\Omega ))$, yields (3.1).
\end{proof}

Now, let us prove $L^{\infty }$ regularity for the weak solutions. Decompose
the weak solution determined by Lemma 3.1 as follows%
\begin{equation*}
w(t,x)=v(t,x)+u(t,x)\text{,}
\end{equation*}%
where
\begin{equation}
\left\{
\begin{array}{c}
v_{tt}-\Delta v_{t}+v_{t}-\Delta v=(1+\lambda _{1})w_{t}-g(w)+h\text{\ \ \ \
in \ }(0,T)\times \Omega \text{,} \\
v=0\text{ \ \ \ \ \ \ \ \ \ \ \ \ \ \ \ \ \ \ \ \ \ \ \ \ \ \ \ \ \ \ \ \ \
\ \ \ \ \ \ \ \ \ \ \ \ \ \ \ \ \ \ on \ }(0,T)\times \partial \Omega \text{,%
} \\
v(0,\cdot )=w_{0},\text{ \ \ \ \ }v_{t}(0,\cdot )=w_{1}\text{\ \ \ \ \ \ \ \
\ \ \ \ \ \ \ \ \ \ \ \ \ \ \ \ \ \ \ \ \ \ \ \ \ \ \ \ in \ }\Omega \text{ }%
\end{array}%
\right.  \tag{3.15}
\end{equation}%
and

\begin{equation}
\left\{
\begin{array}{c}
u_{tt}-\Delta u_{t}+u_{t}-\Delta u=-f_{1}(w_{t})\text{\ \ \ \ in \ }%
(0,T)\times \Omega \text{, \ \ \ \ \ \ \ \ \ \ } \\
u=0\text{ \ \ \ \ \ \ \ \ \ \ \ \ \ \ \ \ \ \ \ \ \ \ \ \ \ \ \ \ \ \ \ \ \
on \ }(0,T)\times \partial \Omega \text{, \ \ \ \ \ \ \ \ \ \ } \\
u(0,\cdot )=0,\text{ \ \ \ \ \ \ \ \ \ \ }u_{t}(0,\cdot )=0\text{\ \ \ \ \ \
\ \ \ \ \ \ \ \ \ in \ }\Omega \text{. \ \ \ \ \ \ \ \ \ \ \ \ }%
\end{array}%
\right.  \tag{3.16}
\end{equation}

\begin{lemma}
Let $(w_{0},w_{1})\in (H_{0}^{1}(\Omega )\cap L^{\infty }(\Omega ))\times
L^{2}(\Omega )$ and $w(t,x)$ be the weak solution of the problem (2.1). Then
the problem (3.15) has a unique weak solution $v\in C(\left[ 0,T\right]
;H_{0}^{1}(\Omega )\cap L^{\infty }(\Omega ))$ with $v_{t}\in C(\left[ 0,T%
\right] ;L^{2}(\Omega ))\cap L^{2}(0,T;H_{0}^{1}(\Omega ))$ such that%
\begin{equation*}
\left\Vert v(t)-e^{-t}w_{0}\right\Vert _{H^{s}(\Omega )}\leq \frac{c}{2-s}%
\left( 1+\left\Vert g(w)\right\Vert _{L^{\infty }(0,T;L^{2}(\Omega ))}\right.
\end{equation*}%
\begin{equation}
+\left. \left\Vert w_{t}\right\Vert _{L^{\infty }(0,T;L^{2}(\Omega
))}\right) ,\text{ \ }\forall t\in \lbrack 0,T],\text{ \ }\forall s\in
\lbrack 0,2)\text{.}  \tag{3.17}
\end{equation}
\end{lemma}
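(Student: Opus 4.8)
The plan is to exploit the factorization of the linear operator in (3.15) as $(\partial_{t}+1)(\partial_{t}-\Delta)$, which reduces the second-order problem to a first-order equation in time followed by a linear heat equation; the smoothing of the Dirichlet heat semigroup then produces the $H^{s}$-bound. Throughout, write $F=(1+\lambda_{1})w_{t}-g(w)+h$ and note that, since $w$ is the weak solution from Lemma 3.1 (so $w_{t}\in L^{\infty}(0,T;L^{2}(\Omega))$ and $g(w)\in L^{\infty}(0,T;L^{2}(\Omega))$ by (3.7)) and $h\in L^{2}(\Omega)$, one has $F\in L^{\infty}(0,T;L^{2}(\Omega))$ with $\|F\|_{L^{\infty}(0,T;L^{2}(\Omega))}\le c\big(1+\|g(w)\|_{L^{\infty}(0,T;L^{2}(\Omega))}+\|w_{t}\|_{L^{\infty}(0,T;L^{2}(\Omega))}\big)$.

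\textbf{Reduction.} Setting $z:=v_{t}-\Delta v$ and using $(\partial_{t}+1)(\partial_{t}-\Delta)v=v_{tt}-\Delta v_{t}+v_{t}-\Delta v$, the problem (3.15) becomes $z_{t}+z=F$ with $z(0)=w_{1}-\Delta w_{0}$, whence
\[
z(t)=e^{-t}(w_{1}-\Delta w_{0})+\int_{0}^{t}e^{-(t-\tau)}F(\tau)\,d\tau .
\]
Because $w_{0}\in H_{0}^{1}(\Omega)$ only, $\Delta w_{0}\in H^{-1}(\Omega)$, so this identity is to be read weakly; I would first perform the whole computation with the smooth Galerkin data of Lemma 3.1 and pass to the limit, which is the point where the main technical care is needed. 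Solving next $v_{t}-\Delta v=z$, $v(0)=w_{0}$, by Duhamel's formula gives $v(t)=e^{t\Delta}w_{0}+\int_{0}^{t}e^{(t-\tau)\Delta}z(\tau)\,d\tau$, where $\{e^{t\Delta}\}_{t\ge 0}$ denotes the Dirichlet heat semigroup.

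\textbf{The key integration by parts.} Substituting $z$ and treating the delicate term by commuting $\Delta$ with the semigroup and using $-\Delta e^{(t-\tau)\Delta}=\frac{d}{d\tau}e^{(t-\tau)\Delta}$, an integration by parts in $\tau$ converts $-\int_{0}^{t}e^{-\tau}e^{(t-\tau)\Delta}\Delta w_{0}\,d\tau$ into $e^{-t}w_{0}-e^{t\Delta}w_{0}+\int_{0}^{t}e^{-\tau}e^{(t-\tau)\Delta}w_{0}\,d\tau$. The term $e^{t\Delta}w_{0}$ cancels the leading Duhamel term, which is exactly why $e^{-t}w_{0}$ is the correct quantity to subtract in (3.17), and one is left with the representation
\[
v(t)-e^{-t}w_{0}=\int_{0}^{t}e^{-\tau}e^{(t-\tau)\Delta}w_{0}\,d\tau+\int_{0}^{t}e^{-\tau}e^{(t-\tau)\Delta}w_{1}\,d\tau+\int_{0}^{t}e^{(t-\tau)\Delta}\Big(\int_{0}^{\tau}e^{-(\tau-\sigma)}F(\sigma)\,d\sigma\Big)d\tau ,
\]
in which $w_{0}$, $w_{1}$ and $F$ now each sit behind a genuinely smoothing heat kernel.

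\textbf{The estimate and regularity.} Applying in each integral the smoothing bound $\|e^{(t-\tau)\Delta}\phi\|_{H^{s}(\Omega)}\le c\,(t-\tau)^{-s/2}\|\phi\|_{L^{2}(\Omega)}$, together with $\|w_{1}\|_{L^{2}(\Omega)}\le\|w_{t}\|_{L^{\infty}(0,T;L^{2}(\Omega))}$, the bound on $\|F\|_{L^{\infty}(0,T;L^{2})}$ above, and the fact that $\|w_{0}\|_{L^{2}(\Omega)}$ is controlled by the data, every time-integral reduces to $\int_{0}^{t}(t-\tau)^{-s/2}\,d\tau=\frac{2}{2-s}\,t^{(2-s)/2}$, which is finite precisely for $s<2$ and produces exactly the factor $\frac{1}{2-s}$ of (3.17). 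Existence, uniqueness, and the regularity $v\in C([0,T];H_{0}^{1}(\Omega))$, $v_{t}\in C([0,T];L^{2}(\Omega))\cap L^{2}(0,T;H_{0}^{1}(\Omega))$ follow from standard linear theory for (3.15), while the membership $v\in C([0,T];L^{\infty}(\Omega))$ is then automatic: choosing any $s\in(1,2)$, the two-dimensional Sobolev embedding $H^{s}(\Omega)\hookrightarrow C(\overline{\Omega})$ with (3.17) gives $v(t)-e^{-t}w_{0}\in L^{\infty}(\Omega)$, and $e^{-t}w_{0}\in L^{\infty}(\Omega)$ since $w_{0}\in L^{\infty}(\Omega)$. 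The step I expect to require the most care is the integration by parts above for data $w_{0}$ merely in $H_{0}^{1}(\Omega)$ (so $\Delta w_{0}\in H^{-1}(\Omega)$); I would justify it by regularizing $w_{0}$, carrying out the computation for smooth data where all terms are classical, and passing to the limit in $H^{s}(\Omega)$ using the uniform bounds just obtained.
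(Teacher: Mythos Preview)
Your proof is correct and is in fact very close to the paper's; the only real difference is the \emph{order} in which you use the factorization. The paper sets $\varphi=v+v_{t}$, i.e.\ it writes the operator as $(\partial_{t}-\Delta)(\partial_{t}+1)$ rather than $(\partial_{t}+1)(\partial_{t}-\Delta)$. Then $\varphi$ solves the heat equation $\varphi_{t}-\Delta\varphi=F$ with initial datum $\varphi(0)=w_{0}+w_{1}\in L^{2}(\Omega)$, the smoothing estimate $\|e^{t\Delta}\|_{L^{2}\to H^{s}}\le Mt^{-s/2}$ gives directly
\[
\|\varphi(t)\|_{H^{s}(\Omega)}\le Mt^{-s/2}\|w_{0}+w_{1}\|_{L^{2}(\Omega)}+\tfrac{2M}{2-s}\,\|F\|_{L^{\infty}(0,T;L^{2}(\Omega))},
\]
and the ODE $v_{t}+v=\varphi$, $v(0)=w_{0}$, yields $v(t)-e^{-t}w_{0}=\int_{0}^{t}e^{-(t-\tau)}\varphi(\tau)\,d\tau$, from which (3.17) follows by one more integration.

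After a change of variable your final representation coincides with this one, so both routes give the same estimate. The practical gain of the paper's ordering is that the initial datum for the heat problem is $w_{0}+w_{1}\in L^{2}(\Omega)$ rather than $w_{1}-\Delta w_{0}\in H^{-1}(\Omega)$; this eliminates entirely the integration-by-parts step and the approximation argument that you (rightly) flagged as the most delicate point of your argument.
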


\begin{proof}
Denoting $\varphi =v+v_{t}$, by (3.15), we have%
\begin{equation}
\left\{
\begin{array}{c}
\varphi _{t}-\Delta \varphi =(1+\lambda _{1})w_{t}-g(w)+h\text{\ \ \ \ in \ }%
(0,T)\times \Omega \text{,} \\
\varphi =0\text{ \ \ \ \ \ \ \ \ \ \ \ \ \ \ \ \ \ \ \ \ \ \ \ \ \ \ \ \ \ \
\ \ \ \ \ on \ }(0,T)\times \partial \Omega \text{,} \\
\varphi (0,\cdot )=w_{0}+w_{1}\text{ \ \ \ \ \ \ \ \ \ \ \ \ \ \ \ \ \ \ \ \
\ \ \ \ \ \ \ \ \ \ \ \ \ \ in \ }\Omega \text{.}%
\end{array}%
\right.  \tag{3.18}
\end{equation}%
It is well known (see, for example \cite[p. 116]{24}) that the Laplace
operator $\Delta $ with $D(\Delta )=H^{2}(\Omega )\cap H_{0}^{1}(\Omega )$
generates an analitic semigroup in $L^{2}(\Omega )$ and
\begin{equation}
\left\Vert e^{\Delta t}\right\Vert _{\mathcal{L(}L^{2}(\Omega ),H^{s}(\Omega
))}\leq Mt^{-\frac{s}{2}}\text{,}  \tag{3.19}
\end{equation}%
for $t,s\geq 0$. Hence, by using the variations of constants formula, from
(3.18) and (3.19), we obtain that $\varphi \in C(\left[ 0,T\right]
;L^{2}(\Omega ))\cap C((0,T];H^{s}(\Omega ))\cap L^{2}(0,T;H_{0}^{1}(\Omega
))$ and
\begin{equation}
\left\Vert \varphi (t)\right\Vert _{H^{s}(\Omega )}\leq Mt^{-\frac{s}{2}%
}\left\Vert w_{0}+w_{1}\right\Vert _{L^{2}(\Omega )}+\frac{2}{2-s}%
M(1+\left\Vert g(w)\right\Vert _{L^{\infty }(0,T;L^{2}(\Omega ))}+\left\Vert
w_{t}\right\Vert _{L^{\infty }(0,T;L^{2}(\Omega ))})\text{,}  \tag{3.20}
\end{equation}%
for every $t\in \lbrack 0,T]$ and $s\in \lbrack 0,2)$. As a consequence,
solving the equation $v+v_{t}=\varphi $, we get (3.17). By using the
embedding $H^{1+\varepsilon }(\Omega )\subset L^{\infty }(\Omega )$, from
(3.17), (3.20) and the equation $v+v_{t}=\varphi $, it follows that $v\in
L^{\infty }(0,T;H_{0}^{1}(\Omega )\cap L^{\infty }(\Omega ))$ and $v_{t}\in
L^{1}(0,T;H_{0}^{1}(\Omega )\cap L^{\infty }(\Omega ))$. Hence, $v\in C(%
\left[ 0,T\right] ;H_{0}^{1}(\Omega )\cap L^{\infty }(\Omega ))$, which,
together with $\varphi \in C(\left[ 0,T\right] ;L^{2}(\Omega ))\cap
L^{2}(0,T;H_{0}^{1}(\Omega ))$, yields $v_{t}\in C(\left[ 0,T\right]
;L^{2}(\Omega ))\cap L^{2}(0,T;H_{0}^{1}(\Omega ))$.
\end{proof}

\begin{lemma}
Assume that the conditions (2.2)-(2.4) are satisfied and $w(t,x)$ is the
weak solution of the problem (2.1) on $\left[ 0,T\right] \times \Omega $.
Then $u\in C(\left[ 0,T\right] ;L^{\infty }(\Omega ))$ and for every $\delta
>0$ there exists $c(\delta ,T)>0$ such that%
\begin{equation}
\left\Vert u(t)\right\Vert _{L^{\infty }(\Omega )}\leq \delta +c(\delta ,T)%
\underset{0}{\overset{T}{\int }}\left\langle
f_{1}(w_{t}(s)),w_{t}(s)\right\rangle ds,\text{ \ }\forall t\in \lbrack 0,T]%
\text{, }  \tag{3.21}
\end{equation}%
where $u\in C(\left[ 0,T\right] ;H_{0}^{1}(\Omega ))$, with $u_{t}\in
C_{s}(0,T;L^{2}(\Omega ))\cap L^{2}(0,T;H_{0}^{1}(\Omega ))$ and $\underset{%
t\searrow 0}{\lim }\left\Vert u_{t}(t)\right\Vert _{L^{2}(\Omega )}=0$, is
the weak solution of (3.16).
\end{lemma}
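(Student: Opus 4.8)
The plan is to repeat the reduction that made Lemma 3.2 work: set $\psi=u+u_{t}$. Then the second–order system (3.16) collapses to a single linear heat equation
\[
\psi_{t}-\Delta\psi=-f_{1}(w_{t})\ \text{ in }(0,T)\times\Omega,\qquad \psi=0\text{ on }(0,T)\times\partial\Omega,\qquad \psi(0,\cdot)=0,
\]
the zero initial datum coming from $u(0)=u_{t}(0)=0$, which forces $\psi(0)=u(0)+u_{t}(0)=0$. Once $\psi$ is controlled, $u$ is recovered by integrating the first–order ODE $u_{t}+u=\psi$ with $u(0)=0$, i.e. $u(t)=\int_{0}^{t}e^{-(t-s)}\psi(s)\,ds$; thus any bound on $\psi$ transfers to $u$ after a harmless convolution with $e^{-(t-s)}$, and $u\in C([0,T];X)$ whenever $\psi\in C([0,T];X)$. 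The energy identity for (3.16), handled exactly as in Lemma 3.1, will deliver $u\in C([0,T];H_{0}^{1}(\Omega))$, $u_{t}\in C_{s}(0,T;L^{2}(\Omega))\cap L^{2}(0,T;H_{0}^{1}(\Omega))$ and $\lim_{t\searrow0}\|u_{t}(t)\|_{L^{2}(\Omega)}=0$; the genuine content is the $L^{\infty}$ bound (3.21).

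For (3.21) I would split the source by a threshold $R>0$, writing $f_{1}(w_{t})=f_{1}(w_{t})\chi_{\{|w_{t}|\le R\}}+f_{1}(w_{t})\chi_{\{|w_{t}|>R\}}=:F_{R}+G_{R}$, and correspondingly $\psi=\psi_{R}+\widetilde{\psi}_{R}$, $u=u_{R}+\widetilde{u}_{R}$. On $\{|w_{t}|\le R\}$ the nonlinearity is bounded, $\|F_{R}(\tau)\|_{L^{\infty}(\Omega)}\le\max_{|\xi|\le R}|f_{1}(\xi)|=:C_{R}$, so Duhamel's formula together with the analytic–semigroup estimate (3.19), in the form $\|e^{\Delta t}\|_{\mathcal{L}(L^{2},H^{s})}\le Mt^{-s/2}$, puts $\psi_{R}\in C([0,T];H^{s}(\Omega))$ for any $s\in(1,2)$ with $\|\psi_{R}\|_{C([0,T];H^{s})}\le\frac{c}{2-s}\,C_{R}$; since $H^{s}(\Omega)\subset L^{\infty}(\Omega)$ for $s>1$, this gives $\|u_{R}(t)\|_{L^{\infty}(\Omega)}\le c(T)C_{R}$, and because $f_{1}(0)=0$ makes $C_{R}\to0$ as $R\to0$, I fix $R=R(\delta,T)$ so small that this piece is $\le\delta$. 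For the tail I use that (2.2) makes $f_{1}=f+\lambda_{1}\mathrm{id}$ strictly increasing through $0$, so $f_{1}(w_{t})w_{t}=|f_{1}(w_{t})|\,|w_{t}|$ and hence $|G_{R}|\le R^{-1}f_{1}(w_{t})w_{t}$ pointwise, whence $\|G_{R}(\tau)\|_{L^{1}(\Omega)}\le R^{-1}\langle f_{1}(w_{t}(\tau)),w_{t}(\tau)\rangle$; the remaining task is to push this $L^{1}$–in–space, dissipation–weighted bound through the heat evolution so as to obtain an $L^{\infty}$ estimate proportional to $\int_{0}^{T}\langle f_{1}(w_{t}),w_{t}\rangle\,ds$.

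The hard part is exactly this last step, and it is dictated by the two–dimensional borderline: $\|e^{\Delta t}\|_{\mathcal{L}(L^{1},L^{\infty})}\sim t^{-1}$ has a non–integrable singularity, so a source lying only in $L^{1}(\Omega)$ cannot be carried into $L^{\infty}(\Omega)$ by Duhamel, and mapping $L^{1}\to H^{s}$ reaches only $s<1$, just below the embedding $H^{s}\subset L^{\infty}$. To close the gap I would exploit the exponential integrability special to this problem: since $w_{t}\in L^{2}(0,T;H_{0}^{1}(\Omega))$ and, in dimension two, $H_{0}^{1}(\Omega)\subset L_{\Psi}^{\ast}(\Omega)$ with $\Psi(z)=e^{z^{2}}$ (the Trudinger--Moser embedding already used for (3.7)), while the structural hypothesis (2.4) keeps the growth of $f$ subcritical relative to $\Psi$, the field $f_{1}(w_{t})$ in fact lies in $L^{p}(\Omega)$ for some $p>1$ with norm controlled by the energy; interpolating this exponential–integrability bound against the $L^{1}$ tail estimate upgrades $G_{R}$ to a space on which the heat semigroup does reach $H^{s}$ with $s>1$, hence $L^{\infty}(\Omega)$, the constant carrying the factor $R^{-1}$ and the time integral $\int_{0}^{T}\langle f_{1}(w_{t}),w_{t}\rangle\,ds$. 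Adding the two contributions yields (3.21), and the continuity $u\in C([0,T];L^{\infty}(\Omega))$ then follows from the strong continuity of the heat evolution and dominated convergence, just as the $H^{s}$–continuity was obtained for $\varphi$ in Lemma 3.2. I expect the delicate bookkeeping to be precisely this quantitative interpolation: arranging the exponential–integrability estimate so that the dependence on the dissipation integral stays linear while the image still lands strictly above the $L^{\infty}$ threshold.
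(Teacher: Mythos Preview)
Your reduction $\psi=u+u_{t}$ and the plan to split the source are both right, and you correctly locate the obstruction: in two dimensions $\|e^{\Delta t}\|_{\mathcal{L}(L^{1},L^{\infty})}\sim t^{-1}$ is non-integrable, so an $L^{1}$-controlled tail cannot be pushed to $L^{\infty}$ by Duhamel alone. The gap is in your proposed cure. Trudinger--Moser gives a bound on $\int_{\Omega}e^{k|w_{t}(\tau)|^{2}}\,dx$ only when $k\|\nabla w_{t}(\tau)\|_{L^{2}}^{2}$ stays below the sharp threshold, and you have merely $w_{t}\in L^{2}(0,T;H_{0}^{1})$---no pointwise-in-time control on $\|\nabla w_{t}(\tau)\|$, hence no usable $L^{p}$ bound on $f_{1}(w_{t}(\tau))$, and nothing to interpolate against. (Nor can the dissipation integral alone supply $L^{p}$: for $f_{1}(s)\sim e^{|s|^{\alpha}}$, $|f_{1}(s)|^{p}\le C\,sf_{1}(s)$ would force $e^{(p-1)|s|^{\alpha}}\le C|s|$, impossible for any $p>1$.) In particular, condition (2.4) appears in your sketch only as a qualitative growth remark; any proof of (3.21) must use it sharply.

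The paper's proof uses the same substitution but then, via the pointwise Gaussian bound of Lemma~A.3,
\[
|\psi(\tau,x)|\le\frac{1}{4\pi}\int_{0}^{\tau}\frac{1}{\tau-s}\int_{\Omega}e^{-|x-y|^{2}/4(\tau-s)}\,|f_{1}(w_{t}(s,y))|\,dy\,ds,
\]
splits with a \emph{time-dependent} threshold $|f_{1}(w_{t})|\lessgtr\varepsilon(\tau-s)^{-\alpha}$ for some $\alpha\in(0,1)$. On the small set the Gaussian integral contributes $(\tau-s)$, leaving an integrable $(\tau-s)^{-\alpha}$ and an $O(\varepsilon)$ term. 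On the large set $|w_{t}|\ge f_{1}^{-1}(\varepsilon(\tau-s)^{-\alpha})$, so $|f_{1}(w_{t})|\le f_{1}(w_{t})w_{t}/f_{1}^{-1}(\varepsilon(\tau-s)^{-\alpha})$; dropping the Gaussian and integrating $\|\psi(\tau)\|_{L^{\infty}}$ in $\tau$, everything reduces to the finiteness of
\[
\int_{0}^{T}\frac{d\sigma}{\sigma\,f_{1}^{-1}(\varepsilon\sigma^{-\alpha})}\;=\;\frac{1}{\alpha}\int_{f_{1}^{-1}(\varepsilon T^{-\alpha})}^{\infty}\frac{f_{1}'(\nu)}{\nu f_{1}(\nu)}\,d\nu,
\]
obtained via $\lambda=\sigma^{-\alpha}$, $\nu=f_{1}^{-1}(\varepsilon\lambda)$---and this is exactly condition (2.4). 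Then $u(t)=\int_{0}^{t}e^{-(t-\tau)}\psi(\tau)\,d\tau$ inherits the $L^{\infty}$ bound and continuity. The missing idea in your proposal is this singular, kernel-adapted cutoff together with the change of variables that turns (2.4) into the precise integrability needed at the two-dimensional borderline.
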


\begin{proof}
Setting $h(t,x)=-f_{1}(w_{t}(t,x))$ and $v=u+u_{t}$, by (3.16), we have%
\begin{equation*}
\left\{
\begin{array}{c}
v_{t}-\Delta v=h(t,x)\text{\ \ \ \ in \ }(0,T)\times \Omega \text{,} \\
v=0\text{ \ \ \ \ \ \ \ \ \ \ \ \ \ \ \ on \ }(0,T)\times \partial \Omega
\text{,} \\
v(0,\cdot )=0\text{ \ \ \ \ \ \ \ \ \ \ \ \ \ \ \ \ \ \ \ \ \ \ in \ }\Omega
\text{.}%
\end{array}%
\right.
\end{equation*}%
By Lemma A.3, it follows that%
\begin{equation*}
\left\vert v(\tau ,x)\right\vert \leq \text{ }\frac{1}{4\pi }\underset{0}{%
\overset{\tau }{\int }}\frac{1}{(\tau -s)}\underset{\Omega }{\int }e^{-\frac{%
\left\vert x-y\right\vert ^{2}}{4(\tau -s)}}\left\vert h(s,y)\right\vert dyds
\end{equation*}%
\begin{equation}
\text{ }=\frac{1}{4\pi }\underset{0}{\overset{\tau }{\int }}\frac{1}{(\tau
-s)}\underset{\Omega }{\int }e^{-\frac{\left\vert x-y\right\vert ^{2}}{%
4(\tau -s)}}\left\vert f_{1}(w_{t}(s,y))\right\vert dyds\text{, \ a.e. in }%
(0,T)\times \Omega \text{.}  \tag{3.22}
\end{equation}%
Now, let us estimate the right hand side of (3.22).%
\begin{equation*}
\underset{0}{\overset{\tau }{\int }}\frac{1}{(\tau -s)}\underset{\Omega }{%
\int }e^{-\frac{\left\vert x-y\right\vert ^{2}}{4(\tau -s)}}\left\vert
f_{1}(w_{t}(s,y))\right\vert dyds
\end{equation*}%
\begin{equation*}
=\underset{0}{\overset{\tau }{\int }}\frac{1}{(\tau -s)}\underset{\left\{
x\in \Omega :f_{1}^{-1}(-\varepsilon (\tau -s)^{-\alpha })\leq
w_{t}(s,y)\leq f_{1}^{-1}(\varepsilon (\tau -s)^{-\alpha })\right\} }{\int }%
e^{-\frac{\left\vert x-y\right\vert ^{2}}{4(\tau -s)}}\left\vert
f_{1}(w_{t}(s,y))\right\vert dyds
\end{equation*}%
\begin{equation*}
+\underset{0}{\overset{\tau }{\int }}\frac{1}{(\tau -s)}\underset{\left\{
x\in \Omega :w_{t}(s,y)>f_{1}^{-1}(\varepsilon (\tau -s)^{-\alpha })\right\}
}{\int }e^{-\frac{\left\vert x-y\right\vert ^{2}}{4(\tau -s)}}\left\vert
f_{1}(w_{t}(s,y))\right\vert dyds
\end{equation*}%
\begin{equation*}
+\underset{0}{\overset{\tau }{\int }}\frac{1}{\tau -s}\underset{\left\{ x\in
\Omega :w_{t}(s,y)<f_{1}^{-1}(-\varepsilon (\tau -s)^{-\alpha })\right\} }{%
\int }e^{-\frac{\left\vert x-y\right\vert ^{2}}{4(\tau -s)}}\left\vert
f_{1}(w(s,y))\right\vert dyds
\end{equation*}%
\begin{equation*}
\leq \varepsilon \underset{0}{\overset{\tau }{\int }}\frac{1}{(\tau
-s)^{1+\alpha }}\underset{R^{2}}{\int }e^{-\frac{\left\vert x-y\right\vert
^{2}}{4(\tau -s)}}dyds+\underset{0}{\overset{\tau }{\int }}\frac{1}{\left(
\tau -s\right) f_{1}^{-1}(\varepsilon (\tau -s)^{-\alpha })}\underset{\Omega
}{\int }f_{1}(w_{t}(s,y))w_{t}(s,y)dyds
\end{equation*}%
\begin{equation*}
-\underset{0}{\overset{\tau }{\int }}\frac{1}{\left( \tau -s\right)
f_{1}^{-1}(-\varepsilon (\tau -s)^{-\alpha })}\underset{\Omega }{\int }%
f_{1}(w_{t}(s,y))w_{t}(s,y)dyds
\end{equation*}%
\begin{equation*}
=4\pi \varepsilon \underset{0}{\overset{\tau }{\int }}\frac{1}{(\tau
-s)^{\alpha }}ds+\underset{0}{\overset{\tau }{\int }}\frac{1}{\left( \tau
-s\right) f_{1}^{-1}(\varepsilon (\tau -s)^{-\alpha })}\underset{\Omega }{%
\int }f_{1}(w_{t}(s,y))w_{t}(s,y)dyds
\end{equation*}%
\begin{equation*}
-\underset{0}{\overset{\tau }{\int }}\frac{1}{\left( \tau -s\right)
f_{1}^{-1}(-\varepsilon (\tau -s)^{-\alpha })}\underset{\Omega }{\int }%
f_{1}(w_{t}(s,y))w_{t}(s,y)dyds,\text{ \ }\forall (\tau ,x)\in (0,T)\times
\Omega ,
\end{equation*}%
where $\alpha \in (0,1)$. Hence, we have%
\begin{equation*}
\underset{0}{\overset{t}{\int }}\left\Vert \underset{0}{\overset{\tau }{\int
}}\frac{1}{(\tau -s)}\underset{\Omega }{\int }e^{-\frac{\left\vert
x-y\right\vert ^{2}}{4(\tau -s)}}\left\vert f_{1}(w_{t}(s,y))\right\vert
dyds\right\Vert _{L^{\infty }(\Omega )}d\tau
\end{equation*}%
\begin{equation*}
\leq \frac{4\pi \varepsilon }{(2-\alpha )(1-\alpha )}t^{2-\alpha }
\end{equation*}%
\begin{equation*}
+\underset{0}{\overset{t}{\int }}\underset{0}{\overset{\tau }{\int }}\left(
\frac{1}{\left( \tau -s\right) f_{1}^{-1}(\varepsilon (\tau -s)^{-\alpha })}-%
\frac{1}{\left( \tau -s\right) f_{1}^{-1}(-\varepsilon (\tau -s)^{-\alpha })}%
\right) \underset{\Omega }{\int }f_{1}(w_{t}(s,y))w_{t}(s,y)dydsd\tau
\end{equation*}%
\begin{equation*}
=\underset{0}{\overset{t}{\int }}\underset{\Omega }{\int }%
f_{1}(w_{t}(s,y))w_{t}(s,y)dyds\underset{s}{\overset{t}{\int }}\left( \frac{1%
}{\left( \tau -s\right) f_{1}^{-1}(\varepsilon (\tau -s)^{-\alpha })}-\frac{1%
}{\left( \tau -s\right) f_{1}^{-1}(-\varepsilon (\tau -s)^{-\alpha })}%
\right) d\tau ds
\end{equation*}%
\begin{equation}
+\frac{4\pi \varepsilon }{(2-\alpha )(1-\alpha )}t^{2-\alpha }\text{.}
\tag{3.23}
\end{equation}%
By the condition (2.4), we obtain%
\begin{equation*}
\underset{s}{\overset{t}{\int }}\left( \frac{1}{\left( \tau -s\right)
f_{1}^{-1}(\varepsilon (\tau -s)^{-\alpha })}-\frac{1}{\left( \tau -s\right)
f_{1}^{-1}(-\varepsilon (\tau -s)^{-\alpha })}\right) d\tau
\end{equation*}%
\begin{equation*}
=\underset{0}{\overset{t-s}{\int }}\left( \frac{1}{\sigma
f_{1}^{-1}(\varepsilon \sigma ^{-\alpha })}-\frac{1}{\sigma
f_{1}^{-1}(-\varepsilon \sigma ^{-\alpha })}\right) d\sigma \leq \frac{1}{%
\alpha }\underset{T^{-\alpha }}{\overset{\infty }{\int }}\frac{1}{\lambda
f_{1}^{-1}(\varepsilon \lambda )}d\lambda
\end{equation*}%
\begin{equation*}
-\frac{1}{\alpha }\underset{T^{-\alpha }}{\overset{\infty }{\int }}\frac{1}{%
\lambda f_{1}^{-1}(-\varepsilon \lambda )}d\lambda =\frac{1}{\alpha }%
\underset{\varepsilon T^{-\alpha }}{\overset{\infty }{\int }}\frac{1}{%
\lambda f_{1}^{-1}(\lambda )}d\lambda
\end{equation*}%
\begin{equation*}
-\frac{1}{\alpha }\underset{\varepsilon T^{-\alpha }}{\overset{\infty }{\int
}}\frac{1}{\lambda f_{1}^{-1}(-\lambda )}d\lambda =\frac{1}{\alpha }\underset%
{f_{1}^{-1}(\varepsilon T^{-\alpha })}{\overset{\infty }{\int }}\frac{%
f_{1}^{\prime }(\nu )}{\nu f_{1}(\nu )}d\nu
\end{equation*}%
\begin{equation*}
+\frac{1}{\alpha }\underset{-\infty }{\overset{f_{1}^{-1}(-\varepsilon
T^{-\alpha })}{\int }}\frac{f_{1}^{\prime }(\nu )}{\nu f_{1}(\nu )}d\nu
<\infty ,
\end{equation*}%
which, together with (3.23), yields%
\begin{equation*}
\underset{0}{\overset{t}{\int }}\left\Vert \underset{0}{\overset{\tau }{\int
}}\frac{1}{(\tau -s)}\underset{\Omega }{\int }e^{-\frac{\left\vert
x-y\right\vert ^{2}}{4(\tau -s)}}\left\vert f_{1}(w_{t}(s,y))\right\vert
dyds\right\Vert _{L^{\infty }(\Omega )}d\tau \leq \frac{4\pi \varepsilon }{%
(2-\alpha )(1-\alpha )}T^{2-\alpha }
\end{equation*}%
\begin{equation*}
+k_{\varepsilon ,T}\underset{0}{\overset{T}{\int }}\underset{\Omega }{\int }%
f_{1}(w_{t}(s,y))w_{t}(s,y)dyds,\text{ \ \ \ }\forall t\in \lbrack 0,T]\text{%
,}
\end{equation*}%
where $k_{\varepsilon ,T}=\frac{1}{\alpha }\left( \underset{%
f_{1}^{-1}(\varepsilon T^{-\alpha })}{\overset{\infty }{\int }}\frac{%
f_{1}^{\prime }(\nu )}{\nu f_{1}(\nu )}d\nu +\underset{-\infty }{\overset{%
f_{1}^{-1}(-\varepsilon T^{-\alpha })}{\int }}\frac{f_{1}^{\prime }(\nu )}{%
\nu f_{1}(\nu )}d\nu \right) $. Taking into account the last inequality in
(3.22), we get $v\in L^{1}(0,T;L^{\infty }(\Omega ))$ and
\begin{equation*}
\underset{0}{\overset{T}{\int }}\left\Vert v(\tau )\right\Vert _{L^{\infty
}(\Omega )}d\tau \leq \frac{2\varepsilon }{(2-\alpha )(1-\alpha )}%
T^{3-\alpha }
\end{equation*}%
\begin{equation}
+\frac{Tk_{\varepsilon ,T}}{4\pi }\underset{0}{\overset{T}{\int }}\underset{%
\Omega }{\int }f_{1}(w_{t}(s,y))w_{t}(s,y)dyds\text{, \ \ }\forall
\varepsilon >0\text{.}  \tag{3.24}
\end{equation}

Now, solving the problem%
\begin{equation*}
\left\{
\begin{array}{c}
u_{t}(t,x)+u(t,x)=v(t,x)\text{ \ in }(0,T)\times \Omega , \\
u(0,x)=0\text{ \ in }\Omega \text{\ \ }%
\end{array}%
\right.
\end{equation*}%
and taking into account (3.24), we obtain $u\in C(\left[ 0,T\right]
;L^{\infty }(\Omega )),$ $u_{t}\in L^{1}(0,T;L^{\infty }(\Omega ))$ and
\begin{equation*}
\left\Vert u(t)\right\Vert _{L^{\infty }(\Omega )}\leq \underset{0}{\overset{%
t}{\int }}e^{-(t-\tau )}\left\Vert v(\tau )\right\Vert _{L^{\infty }(\Omega
)}d\tau \leq \underset{0}{\overset{t}{\int }}\left\Vert v(\tau )\right\Vert
_{L^{\infty }(\Omega )}d\tau
\end{equation*}%
\begin{equation}
\leq \frac{2\varepsilon }{(2-\alpha )(1-\alpha )}T^{3-\alpha }+\frac{%
Tk_{\varepsilon ,T}}{4\pi }\underset{0}{\overset{T}{\int }}\left\langle
f_{1}(w_{t}(s)),w_{t}(s)\right\rangle ds\text{, \ \ }\forall t\in \lbrack
0,T]\text{ and }\forall \varepsilon >0\text{,}  \tag{3.25}
\end{equation}%
which yields (3.21).
\end{proof}

Thus, by Lemma 3.1-3.3, it follows that the problem (2.1) has a weak
solution $w\in C(\left[ 0,T\right] ;H_{0}^{1}(\Omega )\cap L^{\infty
}(\Omega ))$ with $w_{t}\in C_{s}(0,T;L^{2}(\Omega ))\cap
L^{2}(0,T;H_{0}^{1}(\Omega ))$. Also, by (3.7), (3.10), (3.14), (3.17) and
(3.21), we have%
\begin{equation}
\left\Vert w(t)\right\Vert _{L^{\infty }(\Omega )}\leq c(T,\left\Vert
(w_{0},w_{1})\right\Vert _{(H_{0}^{1}(\Omega )\cap L^{\infty }(\Omega
))\times L^{2}(\Omega )}),\text{ }\forall t\in \lbrack 0,T]\text{,}
\tag{3.26}
\end{equation}%
where $c:R_{+}\times R_{+}\rightarrow R_{+}$ is a nondecreasing function
with respect to each variable.

Now, we are in a position to prove the uniqueness of the weak solution.

\begin{lemma}
Under the conditions (2.2)-(2.4), the problem (2.1) has a unique weak
solution.
\end{lemma}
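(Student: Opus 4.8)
The plan is to argue by a direct comparison. Let $w$ and $\widetilde{w}$ be two weak solutions of (2.1) on $[0,T]\times\Omega$ issuing from the same data $(w_0,w_1)$, and set $z=w-\widetilde{w}$. Then $z(0)=0$, and subtracting the two weak formulations gives, for every admissible test function $v$,
\[
\frac{d}{dt}\langle z_t,v\rangle+\langle\nabla z_t,\nabla v\rangle+\langle\nabla z,\nabla v\rangle+\langle f(w_t)-f(\widetilde{w}_t),v\rangle+\langle g(w)-g(\widetilde{w}),v\rangle=0
\]
in $\mathcal{D}'(0,T)$. I would run a Gronwall argument on the energy $E(t)=\|z_t(t)\|_{L^2(\Omega)}^2+\|\nabla z(t)\|_{L^2(\Omega)}^2$, obtained by (formally) choosing $v=z_t$, which produces
\[
\tfrac12\tfrac{d}{dt}E(t)+\|\nabla z_t(t)\|_{L^2(\Omega)}^2+\langle f(w_t)-f(\widetilde{w}_t),z_t\rangle+\langle g(w)-g(\widetilde{w}),z_t\rangle=0 .
\]
The whole point is to show that the two nonlinear pairings are controlled by $E$, so that $E(0^{+})=0$ forces $E\equiv0$ and hence $w=\widetilde{w}$.

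For the source term I would use the $L^\infty$ bound: by (3.26) both $w$ and $\widetilde{w}$ stay in a fixed ball of $L^\infty((0,T)\times\Omega)$ of radius $R=R(T,\widetilde{r})$, and since $g\in C^1(\mathbb{R})$ it is Lipschitz on $[-R,R]$ with constant $L_g=\max_{|s|\le R}|g'(s)|$. Hence $|g(w)-g(\widetilde{w})|\le L_g|z|$ pointwise, so $|\langle g(w)-g(\widetilde{w}),z_t\rangle|\le \tfrac{L_g}{2}(\|z\|_{L^2(\Omega)}^2+\|z_t\|_{L^2(\Omega)}^2)$; this is precisely where the $L^\infty$ regularity of Lemmas 3.2--3.3 is indispensable, since no growth restriction is imposed on $g'$. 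For the damping term I would use (2.2): writing $f_1(s)=f(s)+\lambda_1 s$, the hypothesis $\inf_{s}f'(s)>-\lambda_1$ makes $f_1$ nondecreasing, so $\langle f_1(w_t)-f_1(\widetilde{w}_t),z_t\rangle=\int_\Omega(f_1(w_t)-f_1(\widetilde{w}_t))(w_t-\widetilde{w}_t)\,dx\ge0$, whence $\langle f(w_t)-f(\widetilde{w}_t),z_t\rangle\ge-\lambda_1\|z_t\|_{L^2(\Omega)}^2$. Combining this with the Poincar\'e inequality $\|\nabla z_t\|_{L^2(\Omega)}^2\ge\lambda_1\|z_t\|_{L^2(\Omega)}^2$ shows that $\|\nabla z_t\|_{L^2(\Omega)}^2+\langle f(w_t)-f(\widetilde{w}_t),z_t\rangle\ge0$, so that $\tfrac12\tfrac{d}{dt}E\le \tfrac{L_g}{2}(\|z\|_{L^2(\Omega)}^2+\|z_t\|_{L^2(\Omega)}^2)$. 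Since $\|z\|_{L^2(\Omega)}^2\le\lambda_1^{-1}\|\nabla z\|_{L^2(\Omega)}^2\le\lambda_1^{-1}E$, this becomes $\tfrac{d}{dt}E\le c\,E$, and together with $E(0^{+})=0$ Gronwall's lemma yields $E\equiv0$.

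The hard part is rigorously justifying the energy identity, because $z_t$ is not an admissible test function: in dimension two $w_t,\widetilde{w}_t$ lie only in $H_0^1(\Omega)$ and not in $L^\infty(\Omega)$, while the definition of weak solution tests only against $H_0^1(\Omega)\cap L^\infty(\Omega)$; moreover $f(w_t)-f(\widetilde{w}_t)$ is merely in $L^1(0,T;L^1(\Omega))$ and $z_{tt}\in L^1(0,T;H^{-1}(\Omega)+L^1(\Omega))$, and since $L^1(\Omega)\not\hookrightarrow H^{-1}(\Omega)$ in two dimensions the pairings $\langle z_{tt},z_t\rangle$ and $\langle f(w_t)-f(\widetilde{w}_t),z_t\rangle$ are not a priori defined. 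I would resolve this by exploiting the monotone structure rather than any integrability: the integrand $(f_1(w_t)-f_1(\widetilde{w}_t))(w_t-\widetilde{w}_t)$ is pointwise nonnegative, so its integral is a well-defined element of $[0,\infty]$ and may simply be discarded, turning the troublesome identity into a one-sided inequality that involves only the finite quantities $\|z_t\|^2$, $\|\nabla z_t\|^2$ and the $L^2$-bounded term $g(w)-g(\widetilde{w})$. To legitimize the manipulation of $\langle z_{tt},z_t\rangle$ I would regularize in time (Steklov averages of the difference equation, or approximation of both solutions through the Galerkin scheme of Lemma 3.1), derive the inequality at the regularized level where every pairing is finite and the monotone term keeps its sign, and then pass to the limit by weak lower semicontinuity together with the estimates (2.5); the same limiting procedure absorbs the initial layer, so that $z(0)=0$ and the strong attainment (3.1) of the velocity give $E(0^{+})=0$. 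This monotonicity-plus-$L^\infty$ mechanism is the crux; once it is in place the Gronwall step is routine.
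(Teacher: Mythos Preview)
Your plan is essentially the paper's: test the difference equation with (an approximation of) $z_t$, use the $L^\infty$ bound from Lemmas~3.2--3.3 to make $g$ Lipschitz on the range of the solutions, exploit the monotonicity of $f_1=f+\lambda_1\,\mathrm{id}$ to discard the damping term with the right sign, and close by Gronwall. The paper's regularization is exactly the Steklov/finite-difference device you suggest: it tests (3.27) against $\frac{u(t\pm h)-u(t)}{2h}$, which lies in $H_0^1(\Omega)\cap L^\infty(\Omega)$ precisely because $u\in C([0,T];L^\infty(\Omega))$; note that the $L^\infty$ regularity is used here a second time, not only to control $g$ but to make the test function admissible.

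One point, however, needs more than what you wrote. Your claim that ``the monotone term keeps its sign at the regularized level'' is not correct as stated: at level $h$ the damping contribution is
\[
\int\!\!\int \bigl(f_1(w_t^{(1)})-f_1(w_t^{(2)})\bigr)\,\frac{u(\tau+h)-u(\tau-h)}{2h}\,dx\,d\tau,
\]
and since $\frac{u(\tau+h)-u(\tau-h)}{2h}\neq w_t^{(1)}(\tau)-w_t^{(2)}(\tau)$, monotonicity gives no sign before the limit. Showing that the $\liminf_{h\searrow 0}$ of this quantity is nonnegative is the technical heart of the proof; the paper isolates it as Lemma~A.4 (with boundary terms handled by Lemma~A.5), using Young's inequality for the complementary pair $M_\varphi,N_\varphi$ built from $\varphi=f_1$. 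The hypothesis of Lemma~A.4 requires $\int_0^T\!\int_\Omega\bigl(|f_1(w_t^{(i)})|+|f_1(-w_t^{(i)})|\bigr)|w_t^{(i)}|<\infty$, and this is where the second half of (2.4), namely $|f(-s)|\le c(1+|f(s)|)$, is used. So the mechanism is indeed ``monotonicity plus $L^\infty$'', but the monotonicity enters only after a nontrivial limit passage that you should not elide. Finally, the Galerkin alternative you mention cannot be used here: an arbitrary weak solution need not be a limit of the Galerkin scheme of Lemma~3.1 (that would follow \emph{from} uniqueness, which is what you are proving).
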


\begin{proof}
Assume that there are two solutions to (2.1) as $w^{(i)}\in C(\left[ 0,T%
\right] ;H_{0}^{1}(\Omega )\cap L^{\infty }(\Omega ))$ with $w_{t}^{(i)}\in
C_{s}(0,T;L^{2}(\Omega ))\cap L^{2}(0,1;H_{0}^{1}(\Omega ))$, $%
w^{(i)}(0)=w_{0}$ and $\underset{t\searrow 0}{\lim }\left\Vert
w_{t}^{(i)}(t)-w_{1}\right\Vert _{L^{2}(\Omega )}=0$, $i=1,2$. Let $%
u(t,x)=w_{1}(t,x)-w_{2}(t,x)$. Testing the equation
\begin{equation*}
u_{tt}-\Delta u_{t}+f(w_{t}^{(1)})-f(w_{t}^{(2)})-\Delta
u+g(w^{(1)})-g(w^{(2)})=0\text{,}
\end{equation*}%
by $\frac{u(t)-u(t-h)}{2h}$ and $\frac{u(t+h)-u(t)}{2h}$ on $(\tau
+h,s+h)\times \Omega $ and $(\tau ,s)\times \Omega $, respectively, and then
summing these relations, we obtain%
\begin{equation*}
\frac{1}{2h}\left\langle u_{t}(s+h)+u_{t}(s),u(s+h)-u(s)\right\rangle -\frac{%
1}{2h}\left\langle u_{t}(\tau +h)+u_{t}(\tau ),u(\tau +h)-u(\tau
)\right\rangle
\end{equation*}%
\begin{equation*}
-\frac{1}{2h}\underset{\tau }{\overset{s}{\int }}\left\langle
u_{t}(t+h)+u_{t}(t),u_{t}(t+h)-u_{t}(t)\right\rangle dt
\end{equation*}%
\begin{equation*}
+\frac{1}{2h}\underset{\tau }{\overset{s}{\int }}\left\langle \nabla
u_{t}(t+h)+\nabla u_{t}(t),\nabla u(t+h)-\nabla u(t)\right\rangle dt
\end{equation*}%
\begin{equation*}
+\underset{\tau }{\overset{s}{\int }}\left\langle
f(w_{t}^{(1)}(t))-f(w_{t}^{(2)}(t)),\frac{u(t+h)-u(t-h)}{2h}\right\rangle dt
\end{equation*}%
\begin{equation*}
+\underset{s}{\overset{s+h}{\int }}\left\langle
f(w_{t}^{(1)}(t))-f(w_{t}^{(2)}(t)),\frac{u(t)-u(t-h)}{2h}\right\rangle dt
\end{equation*}%
\begin{equation*}
-\underset{\tau }{\overset{\tau +h}{\int }}\left\langle
f(w_{t}^{(1)}(t))-f(w_{t}^{(2)}(t)),\frac{u(t+h)-u(t)}{2h}\right\rangle dt
\end{equation*}%
\begin{equation*}
+\frac{1}{2h}\underset{\tau }{\overset{s}{\int }}\left\langle \nabla
u(t+h)+\nabla u(t),\nabla u(t+h)-\nabla u(t)\right\rangle dt
\end{equation*}%
\begin{equation*}
=\frac{1}{2}\underset{\tau }{\overset{s}{\int }}\left\langle
g(w^{(2)}(t+h))-g(w^{(1)}(t+h)),\frac{u(t+h)-u(t)}{h}\right\rangle dt
\end{equation*}%
\begin{equation}
+\frac{1}{2}\underset{\tau }{\overset{s}{\int }}\left\langle
g(w^{(2)}(t))-g(w^{(1)}(t)),\frac{u(t+h)-u(t)}{h}\right\rangle dt,\text{ \ }%
\forall \lbrack \tau ,s]\subset (0,T),  \tag{3.27}
\end{equation}%
where $h$ is a sufficiently small positive number. Since
\begin{equation*}
\frac{1}{2h}\left\langle u_{t}(s+h)+u_{t}(s),u(s+h)-u(s)\right\rangle -\frac{%
1}{2h}\underset{\tau }{\overset{s}{\int }}\left\langle
u_{t}(t+h)+u_{t}(t),u_{t}(t+h)-u_{t}(t)\right\rangle dt
\end{equation*}%
\begin{equation*}
+\frac{1}{2h}\underset{\tau }{\overset{s}{\int }}\left\langle \nabla
u(t+h)+\nabla u(t),\nabla u(t+h)-\nabla u(t)\right\rangle dt=\frac{1}{4h}%
\frac{d}{ds}\left\langle u(s+h)-u(s),u(s+h)-u(s)\right\rangle
\end{equation*}%
\begin{equation*}
+\frac{1}{h}\left\langle u_{t}(s),u(s+h)-u(s)\right\rangle -\frac{1}{2h}%
\underset{s}{\overset{s+h}{\int }}\left\Vert u_{t}(t)\right\Vert
_{L^{2}(\Omega )}^{2}dt+\frac{1}{2h}\underset{\tau }{\overset{\tau +h}{\int }%
}\left\Vert u_{t}(t)\right\Vert _{L^{2}(\Omega )}^{2}dt
\end{equation*}%
\begin{equation*}
+\frac{1}{2h}\underset{s}{\overset{s+h}{\int }}\left\Vert \nabla
u(t)\right\Vert _{L^{2}(\Omega )}^{2}dt-\frac{1}{2h}\underset{\tau }{\overset%
{\tau +h}{\int }}\left\Vert \nabla u(t)\right\Vert _{L^{2}(\Omega )}^{2}dt
\end{equation*}%
and
\begin{equation*}
\frac{1}{2}\underset{\tau }{\overset{s}{\int }}\left\langle
g(w^{(2)}(t+h))-g(w^{(1)}(t+h)),\frac{u(t+h)-u(t)}{h}\right\rangle dt
\end{equation*}%
\begin{equation*}
+\frac{1}{2}\underset{\tau }{\overset{s}{\int }}\left\langle
g(w^{(2)}(t))-g(w^{(1)}(t)),\frac{u(t+h)-u(t)}{h}\right\rangle dt
\end{equation*}%
\begin{equation*}
\leq c\underset{\tau }{\overset{s}{\int }}\left( \left\Vert
u(t+h)\right\Vert _{L^{2}(\Omega )}+\left\Vert u(t)\right\Vert
_{L^{2}(\Omega )}\right) \left\Vert \frac{u(t+h)-u(t)}{h}\right\Vert
_{L^{2}(\Omega )}dt\text{,}
\end{equation*}%
integrating (3.27) on $(\tau ,\sigma )$ with respect to $s$, passing to the
limit as $h\searrow 0$ and taking into account Lemma A.4-A.5, we get%
\begin{equation*}
\underset{\tau }{\overset{\sigma }{\int }}\left\Vert u_{t}(s)\right\Vert
_{L^{2}(\Omega )}^{2}ds+\frac{1}{2}\underset{\tau }{\overset{\sigma }{\int }}%
\left\Vert \nabla u(s)\right\Vert _{L^{2}(\Omega )}^{2}ds+\underset{\tau }{%
\overset{\sigma }{\int }}\left\Vert \nabla u_{t}(s)\right\Vert
_{L^{2}(\Omega )}^{2}ds
\end{equation*}%
\begin{equation*}
\leq \frac{1}{2}\underset{h\searrow 0}{\lim \sup }\frac{1}{h}\underset{\tau }%
{\overset{\sigma }{\int }}\underset{s}{\overset{s+h}{\int }}\left\Vert
u_{t}(t)\right\Vert _{L^{2}(\Omega )}^{2}dtds+(\sigma -\tau )\left\Vert
\nabla u(\tau )\right\Vert _{L^{2}(\Omega )}^{2}+\widehat{c}(\sigma -\tau
)\left\Vert u_{t}(\tau )\right\Vert _{L^{2}(\Omega )}
\end{equation*}%
\begin{equation}
+2c\underset{0}{\overset{\sigma }{\int }}\underset{0}{\overset{s}{\int }}%
\left\Vert u(t)\right\Vert _{L^{2}(\Omega )}\left\Vert u_{t}(t)\right\Vert
_{L^{2}(\Omega )}dtds,\text{ \ \ }\forall \lbrack \tau ,\sigma ]\subset (0,T)%
\text{.}  \tag{3.28}
\end{equation}%
By Lebesgue's convergence theorem, we find%
\begin{equation*}
\underset{h\searrow 0}{\lim }\frac{1}{h}\underset{\tau }{\overset{\sigma }{%
\int }}\underset{s}{\overset{s+h}{\int }}\left\Vert u_{t}(t)\right\Vert
_{L^{2}(\Omega )}^{2}dtds=\underset{h\searrow 0}{\lim }\frac{1}{h}\underset{%
\tau }{\overset{\sigma }{\int }}\underset{0}{\overset{h}{\int }}\left\Vert
u_{t}(s+t)\right\Vert _{L^{2}(\Omega )}^{2}dtds=\underset{h\searrow 0}{\lim }%
\underset{\tau }{\overset{\sigma }{\int }}\underset{0}{\overset{1}{\int }}%
\left\Vert u_{t}(s+h\lambda )\right\Vert _{L^{2}(\Omega )}^{2}d\lambda ds
\end{equation*}%
\begin{equation*}
=\underset{h\searrow 0}{\lim }\underset{0}{\overset{1}{\int }}\underset{\tau
}{\overset{\sigma }{\int }}\left\Vert u_{t}(s+h\lambda )\right\Vert
_{L^{2}(\Omega )}^{2}dsd\lambda =\underset{h\searrow 0}{\lim }\underset{0}{%
\overset{1}{\int }}\text{ }\underset{\tau +h\lambda }{\overset{\sigma
+h\lambda }{\int }}\left\Vert u_{t}(\tau )\right\Vert _{L^{2}(\Omega
)}^{2}d\tau d\lambda =\underset{\tau }{\overset{\sigma }{\int }}\left\Vert
u_{t}(\tau )\right\Vert _{L^{2}(\Omega )}^{2}d\tau \text{.}
\end{equation*}%
Hence, taking the last equality into consideration in (3.28) and then
passing to the limit as $\tau \searrow 0$, we have%
\begin{equation*}
\underset{0}{\overset{t}{\int }}\left( \left\Vert u_{t}(s)\right\Vert
_{L^{2}(\Omega )}^{2}+\left\Vert \nabla u(s)\right\Vert _{L^{2}(\Omega
)}^{2}\right) ds\leq 2c\underset{0}{\overset{t}{\int }}\underset{0}{\overset{%
s}{\int }}\left( \left\Vert u(\sigma )\right\Vert _{L^{2}(\Omega
)}^{2}+\left\Vert u_{t}(\sigma )\right\Vert _{L^{2}(\Omega )}^{2}\right)
d\sigma ds,\text{ \ }\forall t\in \lbrack 0,T]\text{.}
\end{equation*}%
Denoting $y(s)=\underset{0}{\overset{s}{\int }}\left( \left\Vert u(\sigma
)\right\Vert _{L^{2}(\Omega )}^{2}+\left\Vert u_{t}(\sigma )\right\Vert
_{L^{2}(\Omega )}^{2}\right) d\sigma $, by the last inequality, it follows
that%
\begin{equation*}
y(t)\leq 2c\underset{0}{\overset{t}{\int }}y(s)ds,\text{ \ }\forall t\in
\lbrack 0,T]\text{.}
\end{equation*}%
Thus, applying Gronwall's lemma, we obtain $y(s)=0$ and consequently $%
u(s,.)=0$, for every $s\in \lbrack 0,T]$.
\end{proof}

Denoting $H(t,x)=-g(w(t,x)+h(x)$, by (3.26), we have $H\in L^{\infty
}(0,T;L^{2}(\Omega ))$. Also, due to (2.1),%
\begin{equation}
\left\{
\begin{array}{c}
w_{tt}-\Delta w_{t}+f(w_{t})-\Delta w=H(t,x)\text{\ \ \ \ in \ }(0,T)\times
\Omega \text{, \ } \\
w=0\text{ \ \ \ \ \ \ \ \ \ \ \ \ \ \ \ \ \ \ \ \ \ \ \ \ \ \ \ \ \ \ \ \ \
\ \ \ \ \ \ \ on \ }(0,T)\times \partial \Omega \text{,} \\
w(0,\cdot )=w_{0}\text{ },\text{ \ \ \ \ \ \ \ \ \ \ }w_{t}(0,\cdot )=w_{1}%
\text{\ \ \ \ \ \ \ \ \ \ \ \ \ \ \ \ \ in \ }\Omega \text{.}%
\end{array}%
\right.  \tag{3.29}
\end{equation}%
Applying the techniques of the proof of Lemma 3.3, we can say that the
function $w\in C(\left[ 0,T\right] ;H_{0}^{1}(\Omega )\cap L^{\infty
}(\Omega ))$ with $w_{t}\in C_{s}(0,T;L^{2}(\Omega ))\cap
L^{2}(0,T;H_{0}^{1}(\Omega ))$, is a unique solution to (3.29).

Formally multiplying (3.29) by $w_{tt}$ and integrating over $(s,T)\times
\Omega $, we obtain%
\begin{equation*}
\frac{1}{2}\underset{s}{\overset{T}{\int }}\left\Vert w_{tt}(t)\right\Vert
_{L^{2}(\Omega )}^{2}dt+\frac{1}{2}\left\Vert \nabla w_{t}(T)\right\Vert
_{L^{2}(\Omega )}^{2}+\left\langle F(w(T)),1\right\rangle +\left\langle
\nabla w(T),\nabla w_{t}(T)\right\rangle
\end{equation*}%
\begin{equation*}
\leq \underset{s}{\overset{T}{\int }}\left\Vert \nabla w_{t}(t)\right\Vert
_{L^{2}(\Omega )}^{2}dt+\frac{1}{2}\left\Vert \nabla w_{t}(s)\right\Vert
_{L^{2}(\Omega )}^{2}+\left\langle F(w(s)),1\right\rangle
\end{equation*}%
\begin{equation*}
+\left\langle \nabla w(s),\nabla w_{t}(s)\right\rangle +\frac{1}{2}\underset{%
s}{\overset{T}{\int }}\left\Vert H(t)\right\Vert _{L^{2}(\Omega )}^{2}dt%
\text{,}
\end{equation*}%
where $F(w)=\underset{0}{\overset{w}{\int }}f(s)ds$. Integrating the last
equality on $[0,T]$ with respect to $s$ and taking into account (2.2) and
(2.5), we find
\begin{equation}
\underset{0}{\overset{T}{\int }}\underset{s}{\overset{T}{\int }}\left\Vert
w_{tt}(t)\right\Vert _{L^{2}(\Omega )}^{2}dtds\leq \widetilde{c}%
(T,\left\Vert (w_{0},w_{1})\right\Vert _{(H_{0}^{1}(\Omega )\cap L^{\infty
}(\Omega ))\times L^{2}(\Omega )})\text{.}  \tag{3.30}
\end{equation}%
Since the problem (3.29) admits a unique solution, using Galerkin's
approximations one can justify (3.30). So, by (3.30), we have $w_{tt}\in
L_{loc}^{2}(0,T;L^{2}(\Omega ))$, which together with $w_{t}\in
C_{s}(0,T;L^{2}(\Omega ))$ and (3.1) implies that $w_{t}\in C(\left[ 0,T%
\right] ;L^{2}(\Omega ))$.

Thus, to finish the proof of Theorem 2.1, we just need to show (2.6). Let $w$%
, $v\in $ $C([0,T];H_{0}^{1}(\Omega )\cap L^{\infty }(\Omega ))\cap
C^{1}([0,T];L^{2}(\Omega ))\cap W^{1,2}(0,T;H_{0}^{1}(\Omega ))\cap
W_{loc}^{2,2}(0,T;L^{2}(\Omega ))$ are the weak solutions to (2.1). Then due
to (2.1)$_{1},$ the function $u(t,x)=w(t,x)-v(t,x)$ satisfies the equation%
\begin{equation*}
u_{tt}-\Delta u_{t}+f(w_{t})-f(v_{t})-\Delta u=g(v)-g(w)\text{.}
\end{equation*}%
Testing the above equation by $2u_{t}$ on $(s,t)\times \Omega $ and taking
into account (2.2) and (2.5), we get%
\begin{equation*}
\left\Vert u_{t}(t)\right\Vert _{L^{2}(\Omega )}^{2}+\left\Vert \nabla
u(t)\right\Vert _{L^{2}(\Omega )}^{2}\leq M\underset{s}{\overset{t}{\int }}%
\left( \left\Vert u_{t}(\tau )\right\Vert _{L^{2}(\Omega )}^{2}+\left\Vert
\nabla u(\tau )\right\Vert _{L^{2}(\Omega )}^{2}\right) d\tau
\end{equation*}%
\begin{equation*}
+\left\Vert u_{t}(s)\right\Vert _{L^{2}(\Omega )}^{2}+\left\Vert \nabla
u(s)\right\Vert _{L^{2}(\Omega )}^{2},\text{ \ \ }0<s\leq t<T\text{.}
\end{equation*}%
Therefore, applying Gronwall's lemma, we obtain (2.6).

\bigskip

\section{Dissipativity and asymptotic compactness}

We begin with the following dissipativity result.

\begin{lemma}
Assume that the conditions (2.2)-(2.4) are satisfied. Then for every bounded
subset $B\subset (H_{0}^{1}(\Omega )\cap L^{\infty }(\Omega ))\times
L^{2}(\Omega )$, there exists $c_{B}>0$ such that%
\begin{equation}
\underset{\varphi \in B}{\sup }\left\Vert S(t)\varphi \right\Vert
_{(H_{0}^{1}(\Omega )\cap L^{\infty }(\Omega ))\times L^{2}(\Omega )}\leq
c_{B}\text{, \ \ }\forall t\geq 0\text{.}  \tag{4.1}
\end{equation}
\end{lemma}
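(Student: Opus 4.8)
The plan is to establish a uniform dissipative estimate by working with a modified energy functional and deriving a differential inequality of the form $E'(t) \leq -\kappa E(t) + C$, then separately bounding the $L^{\infty}$-norm. I would begin with the standard energy. Multiplying the equation (formally, or rigorously via the Galerkin approximations already constructed in Lemma 3.1, since (3.30) guarantees $w_{tt}\in L^2_{loc}$) by $w_t$ and integrating over $\Omega$, I obtain the energy identity
\begin{equation*}
\frac{d}{dt}\mathcal{E}(t) + \left\Vert \nabla w_t(t)\right\Vert_{L^2(\Omega)}^2 + \left\langle f(w_t(t)),w_t(t)\right\rangle = 0,
\end{equation*}
where $\mathcal{E}(t)=\frac{1}{2}\left\Vert w_t(t)\right\Vert_{L^2(\Omega)}^2+\frac{1}{2}\left\Vert \nabla w(t)\right\Vert_{L^2(\Omega)}^2+\left\langle G(w(t)),1\right\rangle-\left\langle h,w(t)\right\rangle$. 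The conditions (2.2) on $f$ and the $\liminf$ condition in (2.3) on $g$ guarantee that $\mathcal{E}$ is bounded below and coercive in $H_0^1\times L^2$; the key structural input is $\inf_s f'(s)>-\lambda_1$ together with $\liminf_{|s|\to\infty} g(s)/s>-\lambda_1$, which via the Poincar\'e inequality ensures the gradient terms dominate. This handles the $H_0^1\times L^2$ component of the norm, but the pure energy identity only yields boundedness on finite time intervals; to get a bound uniform in $t\geq 0$ I must introduce dissipation.

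The essential step is the multiplier $w$ (rather than $w_t$). Testing the equation by $\epsilon w$ for a small $\epsilon>0$ produces terms $\left\langle w_{tt},w\right\rangle=\frac{d}{dt}\left\langle w_t,w\right\rangle-\left\Vert w_t\right\Vert^2$, $\left\langle -\Delta w_t,w\right\rangle=\left\langle\nabla w_t,\nabla w\right\rangle$, and the crucial $\left\Vert\nabla w\right\Vert^2$ term, whose negative of a small multiple I add to the energy derivative to gain a $-\kappa\left\Vert\nabla w\right\Vert^2$. The awkward terms are $\epsilon\left\langle f(w_t),w\right\rangle$ and $\epsilon\left\langle g(w),w\right\rangle$: the former is controlled by the already-present $\left\langle f_1(w_t),w_t\right\rangle$ dissipation after an absorption (using the $L^\infty$ bound on $w$ to dominate $\left\langle f(w_t),w\right\rangle$), and the latter is sign-favorable away from the origin by (2.3). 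Choosing $\epsilon$ small enough to absorb cross-terms into the $\left\Vert\nabla w_t\right\Vert^2$ and $\left\langle f_1(w_t),w_t\right\rangle$ dissipation, I arrive at
\begin{equation*}
\frac{d}{dt}\Lambda(t)+\kappa\Lambda(t)\leq C,
\end{equation*}
for a functional $\Lambda$ equivalent to $\mathcal{E}$, which by Gronwall gives the uniform $H_0^1\times L^2$ bound in (4.1).

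For the $L^{\infty}(\Omega)$ component I would reuse the decomposition $w=v+u$ from Lemma 3.2--3.3. The point is that (3.17) controls $\left\Vert v(t)\right\Vert_{L^\infty}$ in terms of $\left\Vert g(w)\right\Vert_{L^\infty(0,T;L^2)}$ and $\left\Vert w_t\right\Vert_{L^\infty(0,T;L^2)}$ (via the embedding $H^{1+\varepsilon}\subset L^\infty$), while (3.21) bounds $\left\Vert u(t)\right\Vert_{L^\infty}$ by $\delta$ plus a constant times the integrated damping $\int_0^T\left\langle f_1(w_t),w_t\right\rangle$. The subtlety is that the naive constants in these estimates grow with $T$, so I cannot simply let $T\to\infty$; instead I must exploit the now-established uniform-in-time bounds on $\left\Vert w_t\right\Vert_{L^2}$, $\left\Vert g(w)\right\Vert_{L^2}$ (via the Orlicz/Moser--Trudinger argument already used for (3.7)), and the dissipation integral (which is bounded uniformly once $\Lambda$ is dissipative) by re-running the decomposition on translated time windows $[t-1,t]$ of \emph{fixed} length. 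The main obstacle I anticipate is precisely this uniformization: ensuring the exponentially-growing nonlinearity does not let the $L^\infty$ constant blow up as $t\to\infty$, which requires that the per-window data $(w(t-1),w_t(t-1))$ stay in a fixed bounded set of $(H_0^1\cap L^\infty)\times L^2$—a fact that is itself part of what we are proving, so the argument must be organized to first close the $H_0^1\times L^2\cap(\text{damping integral})$ bounds and only then feed them into the fixed-length-window version of Lemma 3.2--3.3 to extract the uniform $L^\infty$ control.
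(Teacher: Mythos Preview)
Your proposal has two gaps.

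First, for the $H_0^1\times L^2$ bound, your claim that ``the pure energy identity only yields boundedness on finite time intervals'' is mistaken. The identity already gives $\mathcal{E}(t)\leq\mathcal{E}(0)$ for all $t\geq 0$, since the dissipation $\|\nabla w_t\|_{L^2}^2+\langle f(w_t),w_t\rangle$ is nonnegative by (2.2) and Poincar\'e; combined with the coercivity you yourself note, this is a uniform-in-time bound, and it is precisely what the paper invokes via (2.5)$_1$. Your $\epsilon w$ multiplier is not only unnecessary but circular: to absorb $\epsilon\langle f(w_t),w\rangle$ you appeal to $\|w\|_{L^\infty}$ (since $f(w_t)$ is only in $L^1$), and a uniform-in-$t$ bound on $\|w\|_{L^\infty}$ is exactly the part of (4.1) you have not yet proved. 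The paper never tests by $w$ here.

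Second, for the $L^\infty$ bound you correctly identify the fixed-window strategy and even name the obstruction (``the per-window data \ldots\ stay in a fixed bounded set of $(H_0^1\cap L^\infty)\times L^2$---a fact that is itself part of what we are proving''), but you do not say how to break it. The paper's mechanism is the contraction factor built into (3.17): on each window $[s,s+1]$ the decomposition yields
\[
\|w(s+1)\|_{L^\infty(\Omega)}\leq e^{-1}\|w(s)\|_{L^\infty(\Omega)}+c_B^{(2)},
\]
where $c_B^{(2)}$ depends only on the already-uniform $H_0^1\times L^2$ and damping-integral bounds (through (3.17), (3.21), and the Orlicz estimate behind (3.7)), \emph{not} on $\|w(s)\|_{L^\infty}$. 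Iterating this discrete inequality over $s=0,1,2,\ldots$ gives a geometric series and hence a uniform bound. A single fixed-window application, as you sketch, cannot close without isolating this $e^{-1}$ factor.
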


\begin{proof}
Let $\varphi \in B$ and $(w(t),w_{t}(t))=S(t)\varphi $. By (2.5),
immediately, it follows that%
\begin{equation}
\left\Vert S(t)\varphi \right\Vert _{H_{0}^{1}(\Omega )\times L^{2}(\Omega
)}\leq c_{B}^{(1)},\text{ \ \ }\forall t\geq 0\text{.}  \tag{4.2}
\end{equation}

Denote by $v^{(s)}(t,x)$ the weak solution of (3.15) with $(w(s),w_{t}(s))$
and $w(t+s)$ instead of $(w_{0},w_{1})$ and $w(t)$, respectively. Also,
denote $u^{(s)}(t,x)=w(t+s,x)-v^{(s)}(t,x)$, for $(t,x)\in \lbrack
0,1]\times \Omega $ and $s\geq 0$. Then by Lemma 3.2-3.3 and (2.5), we find%
\begin{equation*}
\left\Vert w(t+s)\right\Vert _{L^{\infty }(\Omega )}\leq e^{-t}\left\Vert
w(s)\right\Vert _{L^{\infty }(\Omega )}+c_{B}^{(2)},\text{ \ }\forall t\in
\lbrack 0,1]\text{ and }\forall s\geq 0\text{ .}
\end{equation*}%
By the iteration, it follows that%
\begin{equation*}
\left\Vert w(n)\right\Vert _{L^{\infty }(\Omega )}\leq e^{-n}\left\Vert
w(0)\right\Vert _{L^{\infty }(\Omega )}+c_{B}^{(2)}\frac{1-e^{-(n-1)}}{%
1-e^{-1}}
\end{equation*}%
and consequently
\begin{equation}
\left\Vert w(n)\right\Vert _{L^{\infty }(\Omega )}\leq \left\Vert
w(0)\right\Vert _{L^{\infty }(\Omega )}+\frac{e}{e-1}c_{B}^{(2)},\text{ \ }%
\forall n\in
\mathbb{Z}
_{+}\text{.}  \tag{4.3}
\end{equation}%
Since for every $T\geq 0$ there exist $n_{T}\in
\mathbb{Z}
_{+}$ and $t_{T}\in \lbrack 0,1)$ such that%
\begin{equation*}
T=n_{T}+t_{T}\text{,}
\end{equation*}%
by (4.3), we get
\begin{equation*}
\left\Vert w(T)\right\Vert _{L^{\infty }(\Omega )}\leq c_{B}^{(3)},\text{ \ }%
\forall T\geq 0\text{,\ }
\end{equation*}%
which, together with (4.2), yields (4.1).
\end{proof}

Now, we will show asymptotic compactness of $\left\{ S(t)\right\} _{t\geq
0}^{\infty }$ in $(H_{0}^{1}(\Omega )\cap L^{\infty }(\Omega ))\times
L^{2}(\Omega )$.

\begin{lemma}
Let the conditions (2.2)-(2.4) be satisfied and $B$ be bounded subset of $%
(H_{0}^{1}(\Omega )\cap L^{\infty }(\Omega ))\times L^{2}(\Omega )$. Then
the set $\left\{ S(t_{m})\varphi _{m}\right\} _{m=1}^{\infty }$ is
relatively compact in $(H_{0}^{1}(\Omega )\cap L^{\infty }(\Omega ))\times
L^{2}(\Omega )$, where $t_{m}\rightarrow \infty $ and $\left\{ \varphi
_{m}\right\} _{m=1}^{\infty }\subset B$.
\end{lemma}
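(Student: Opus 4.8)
The plan is to exploit the strong (parabolic) damping together with the splitting $w=v+u$ of Lemmas 3.2--3.3, reducing everything to a fixed-length window and then upgrading weak convergence to strong. First I would fix a large window length $T>0$ and, for $t_{m}\geq T$, set $\psi _{m}=S(t_{m}-T)\varphi _{m}$; by the dissipativity estimate (Lemma 4.1) the family $\{\psi _{m}\}$ is bounded in $(H_{0}^{1}(\Omega )\cap L^{\infty }(\Omega ))\times L^{2}(\Omega )$, and by autonomy $S(t_{m})\varphi _{m}=S(T)\psi _{m}$. Denoting by $w^{m}$ the solution on the window issuing from $\psi _{m}$, the energy bounds (2.5) are uniform in $m$, so after passing to a subsequence one extracts weak-star limits $w^{m}\rightharpoonup \bar{w}$ exactly as in (3.10), with $\bar{w}$ solving the limiting equation. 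The whole task is then to prove that this convergence is strong at the endpoint $\tau =T$ in $(H_{0}^{1}(\Omega )\cap L^{\infty }(\Omega ))\times L^{2}(\Omega )$.

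For the displacement I would use the decomposition $w^{m}=v^{m}+u^{m}$ on the window. By Lemma 3.2, for $1<s<2$, the quantity $v^{m}(T)-e^{-T}\psi _{m}^{0}$ (where $\psi _{m}^{0}$ is the displacement component of $\psi _{m}$) is bounded in $H^{s}(\Omega )$ uniformly in $m$, its right-hand side being controlled by (2.5) and the bound (3.7) on $g(w^{m})$; since in dimension two $H^{s}(\Omega )$ embeds compactly into $H_{0}^{1}(\Omega )\cap L^{\infty }(\Omega )$ for $s>1$, the family $\{v^{m}(T)-e^{-T}\psi _{m}^{0}\}$ is precompact there, while the leftover $e^{-T}\psi _{m}^{0}$ has norm $\leq e^{-T}c_{B}$ and is made negligible by taking $T$ large. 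The part $u^{m}$, governed by Lemma 3.3 through the heat representation (Lemma A.3), has its $L^{\infty }$-size dictated by $\int_{t_{m}-T}^{t_{m}}\langle f_{1}(w_{t}^{m}),w_{t}^{m}\rangle \,ds$; this integral does \emph{not} tend to zero as $T\to \infty$, so $u^{m}$ must be shown to be precompact rather than small, which reduces to the strong convergence of $f_{1}(w_{t}^{m})$.

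The crux is thus the strong convergence of the velocities, and here I would invoke the strong damping. On an interior subwindow $[t_{m}-T+a,t_{m}]$ the bound (3.30), being determined only by $T$ and $\|\psi _{m}\|$, is uniform in $m$; hence $\{w_{t}^{m}\}$ is bounded in $L^{2}$ of the subwindow with values in $H_{0}^{1}(\Omega )$ while $\{w_{tt}^{m}\}$ is bounded in $L^{2}(L^{2})$, so the compactness theorem cited as [22] gives, along a subsequence, $w_{t}^{m}\to \bar{w}_{t}$ strongly in $L^{2}(L^{2})$ and a.e. By Lemma A.2 together with the uniform dissipation bound $\int \langle f_{1}(w_{t}^{m}),w_{t}^{m}\rangle \,ds\leq c$ this yields $f_{1}(w_{t}^{m})\to f_{1}(\bar{w}_{t})$ strongly in $L^{1}(L^{1})$, and $g(w^{m})\to g(\bar{w})$ from the displacement convergence. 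Feeding these into the linear problems (3.15)--(3.16) forces $v^{m}\to \bar{v}$ and $u^{m}\to \bar{u}$ in $C([0,T];H_{0}^{1}(\Omega )\cap L^{\infty }(\Omega ))$, settling the displacement, and the terminal velocity $w_{t}^{m}(t_{m})\to \bar{w}_{t}(T)$ strongly in $L^{2}(\Omega )$ then follows from the energy identity by a Ball-type argument: weak convergence together with a $\limsup $ of the energy controlled through (2.5) and weak lower semicontinuity of the norm.

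The main obstacle is precisely this passage to the limit in the exponentially growing damping term. Because $f$ grows like $se^{|s|^{\alpha }}$, strong $L^{2}$-convergence of $w_{t}^{m}$ is not in itself enough to pass to the limit in $f(w_{t}^{m})$; one must lean on the integrability hypothesis (2.4) via Lemmas A.2 and 3.3 to convert the finite but non-vanishing dissipation $\int \langle f_{1}(w_{t}^{m}),w_{t}^{m}\rangle \,ds$ into genuine $L^{1}$-compactness of $f_{1}(w_{t}^{m})$, hence $L^{\infty }$-compactness of the damping-driven component $u^{m}$. Making the $w_{tt}$-bound on the receding window and the terminal-time velocity convergence rigorous, all with constants independent of $m$, is where the real work lies; once this is done, the displacement converges in $H_{0}^{1}(\Omega )\cap L^{\infty }(\Omega )$ and the velocity in $L^{2}(\Omega )$, which is the asserted relative compactness of $\{S(t_{m})\varphi _{m}\}$.
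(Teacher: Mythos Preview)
Your route differs from the paper's and carries a genuine gap in the $L^{\infty}$ step. You claim that strong $L^{1}((0,T)\times\Omega)$ convergence of $f_{1}(w_{t}^{m})$, obtained via Lemma~A.2, forces $u^{m}\to\bar{u}$ in $C([0,T];L^{\infty}(\Omega))$. This does not follow: in two dimensions the heat kernel satisfies $\|K(t,\cdot)\|_{L^{\infty}}=\frac{1}{4\pi t}$, which is not integrable near $t=0$, so the solution operator of (3.16) is unbounded from $L^{1}((0,T)\times\Omega)$ to $L^{\infty}(\Omega)$. Lemma~3.3 circumvents this not through $L^{1}$ control of $f_{1}(w_{t})$ but through the dissipation $\int\langle f_{1}(w_{t}),w_{t}\rangle$ combined with condition~(2.4). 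To turn (3.21) into a \emph{compactness} statement one needs the tail dissipation $\int_{\tau}^{\tau+1}\!\int_{\{|w_{mt}|>M\}}f_{1}(w_{mt})w_{mt}$ to be small \emph{uniformly in $m$}; this is not implied by a.e.\ convergence of $w_{t}^{m}$ together with the uniform bound $\int\langle f_{1}(w_{t}^{m}),w_{t}^{m}\rangle\leq c$ (take $w_{t}^{m}$ equal to $m$ on a set of measure $1/(mf_{1}(m))$ to see that the tail can stay of order one).

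The paper's proof is organized around exactly this obstacle. Step~1 establishes $H_{0}^{1}\times L^{2}$ compactness by testing the equation for the \emph{difference} $w_{m_{n}}^{(T)}-w_{m_{k}}^{(T)}$ first against its time derivative and then against itself; the monotonicity in (2.2) handles the $f$-term in the first test, while in the second the cross term $\int_{0}^{T}\langle f(w_{m_{n}t}^{(T)})-f(w_{m_{k}t}^{(T)}),\,w_{m_{n}}^{(T)}-w_{m_{k}}^{(T)}\rangle$ is merely bounded, not small---so the resulting inequality is integrated in the initial time over $(0,T)$ and divided by $T$, after which that bounded term becomes $O(1/T)$. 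No $w_{tt}$ bound, no Ball argument, and no decomposition are used here. Step~2 then \emph{uses} the $H_{0}^{1}\times L^{2}$ compactness just proved: precompact families in $L^{2}$ are equi-integrable, which yields the uniform tightness (4.11), and testing the equation against a truncation of $w_{mt}$ converts (4.11) into the uniform tail-dissipation estimate (4.12). With (4.12) available, a refined decomposition $f_{1}=f_{1}^{M}+(f_{1}-f_{1}^{M})$ plus the Lemma~3.3 machinery gives, for every $\varepsilon>0$, a finite $\varepsilon$-net for $\{w_{m}(t_{m})\}$ in $L^{\infty}(\Omega)$.

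A secondary issue: your Ball-type argument for strong $L^{2}$ convergence of $w_{t}^{m}(t_{m})$ requires passing to the limit in the energy identity, hence convergence of the dissipation $\int\langle f(w_{t}^{m}),w_{t}^{m}\rangle$ and of $E(\psi_{m})$. Neither follows from your hypotheses---weak convergence of $\psi_{m}$ does not give convergence of $E(\psi_{m})$, and Fatou gives only a one-sided inequality for the dissipation, in the wrong direction for Ball's method. The paper's divide-by-$T$ trick sidesteps this entirely.
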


\begin{proof}
\textit{Step1}. We first prove relatively compactness of $\left\{
S(t_{m})\varphi _{m}\right\} _{m=1}^{\infty }$ in $H_{0}^{1}(\Omega )\times
L^{2}(\Omega )$. For any $T>0$ and $m\in
\mathbb{N}
$ such that $t_{m}\geq T$, let us define%
\begin{equation*}
(w_{m}^{(T)}(t),w_{mt}^{(T)}(t))=S(t+t_{m}-T)\varphi _{m}\text{.}
\end{equation*}%
By Lemma 4.1 and (2.5)$_{1}$, it follows that%
\begin{equation*}
\left\Vert w_{m}^{(T)}(t)\right\Vert _{H_{0}^{1}(\Omega )\cap L^{\infty
}(\Omega )}+\left\Vert w_{mt}^{(T)}(t)\right\Vert _{L^{2}(\Omega )}
\end{equation*}%
\begin{equation}
+\underset{0}{\overset{t}{\int }}\left\Vert \nabla
w_{mt}^{(T)}(s)\right\Vert _{L^{2}(\Omega )}^{2}ds+\underset{0}{\overset{t}{%
\int }}\left\langle f_{1}(w_{mt}^{(T)}(s)),w_{mt}^{(T)}(s)\right\rangle
ds\leq c_{1},\text{ }\forall t\geq 0\text{.}  \tag{4.4}
\end{equation}%
Then, by Banach-Alaoglu theorem, there exist a subsequence $\left\{
w_{m_{n}}^{(T)}(t)\right\} _{n=1}^{\infty }$ and a function $w\in L^{\infty
}(0,T;H_{0}^{1}(\Omega )\cap L^{\infty }(\Omega ))\cap W^{1,\infty
}(0,T;L^{2}(\Omega ))\cap W^{1,2}(0,T;H_{0}^{1}(\Omega ))$ such that%
\begin{equation}
\left\{
\begin{array}{c}
w_{m_{n}}^{(T)}(t)\rightarrow w(t)\text{ weakly star in }L^{\infty
}(0,T;H_{0}^{1}(\Omega )\cap L^{\infty }(\Omega ))\text{,} \\
w_{m_{n}t}^{(T)}(t)\rightarrow w_{t}(t)\text{ weakly star in }L^{\infty
}(0,T;L^{2}(\Omega ))\text{, \ \ \ \ \ \ \ \ \ \ } \\
w_{m_{n}t}^{(T)}(t)\rightarrow w_{t}(t)\text{ weakly in }%
L^{2}(0,T;H_{0}^{1}(\Omega ))\text{.\ \ \ \ \ \ \ \ \ \ \ \ \ \ \ }%
\end{array}%
\right.  \tag{4.5}
\end{equation}%
Testing the equation%
\begin{equation*}
w_{m_{n}tt}^{(T)}-w_{m_{k}tt}^{(T)}-\Delta
(w_{m_{n}t}^{(T)}-w_{m_{k}t}^{(T)})-\Delta
(w_{m_{n}}^{(T)}-w_{m_{k}}^{(T)})+f(w_{m_{n}t}^{(T)})-f(w_{m_{k}t}^{(T)})
\end{equation*}%
\begin{equation}
+g(w_{m_{n}}^{(T)})-g(w_{m_{k}}^{(T)})=0  \tag{4.6}
\end{equation}%
by $2(w_{m_{n}t}^{(T)}-w_{m_{k}t}^{(T)})$ on $(s,T)\times \Omega $ and
taking into account (2.2) and (4.4), we find%
\begin{equation*}
\left\Vert w_{m_{n}t}^{(T)}(T)-w_{m_{k}t}^{(T)}(T)\right\Vert _{L^{2}(\Omega
)}^{2}+\left\Vert \nabla (w_{m_{n}}^{(T)}(T)-w_{m_{k}}^{(T)}(T))\right\Vert
_{L^{2}(\Omega )}^{2}
\end{equation*}%
\begin{equation*}
+c_{1}\underset{s}{\overset{T}{\int }}\left\Vert
w_{m_{n}t}^{(T)}(t)-w_{m_{k}t}^{(T)}(t)\right\Vert _{L^{2}(\Omega
)}^{2}dt\leq \left\Vert w_{m_{n}t}^{(T)}(s)-w_{m_{k}t}^{(T)}(s)\right\Vert
_{L^{2}(\Omega )}^{2}+\left\Vert \nabla
(w_{m_{n}}^{(T)}(s)-w_{m_{k}}^{(T)}(s))\right\Vert _{L^{2}(\Omega )}^{2}
\end{equation*}%
\begin{equation}
+c_{2}\underset{s}{\overset{T}{\int }}\left\Vert
w_{m_{n}}^{(T)}(t)-w_{m_{k}}^{(T)}(t)\right\Vert _{L^{2}(\Omega )}^{2}dt%
\text{.}  \tag{4.7}
\end{equation}%
Also, testing (4.6) by $(w_{m_{n}}^{(T)}-w_{m_{k}}^{(T)})$ on $(0,T)\times
\Omega $ and considering (4.4), we get%
\begin{equation*}
\underset{0}{\overset{T}{\int }}\left\Vert \nabla
(w_{m_{n}}^{(T)}(t)-w_{m_{k}}^{(T)}(t))\right\Vert _{L^{2}(\Omega
)}^{2}dt\leq c_{3}+\underset{0}{\overset{T}{\int }}\left\Vert
w_{m_{n}t}^{(T)}(t)-w_{m_{k}t}^{(T)}(t)\right\Vert _{L^{2}(\Omega )}^{2}dt
\end{equation*}%
\begin{equation*}
+c_{4}\underset{0}{\overset{T}{\int }}\left\Vert
w_{m_{n}}^{(T)}(t)-w_{m_{k}}^{(T)}(t)\right\Vert _{L^{2}(\Omega )}^{2}dt
\end{equation*}%
\begin{equation*}
+\left\vert \underset{0}{\overset{T}{\int }}\left\langle
f(w_{m_{n}t}^{(T)}(t))-f(w_{m_{k}t}^{(T)}(t)),w_{m_{n}}^{(T)}(t)-w_{m_{k}}^{(T)}(t)\right\rangle dt\right\vert
\text{,}
\end{equation*}%
which, together with (4.7), implies that%
\begin{equation*}
\underset{0}{\overset{T}{\int }}\left\Vert
w_{m_{n}t}^{(T)}(t)-w_{m_{k}t}^{(T)}(t)\right\Vert _{L^{2}(\Omega )}^{2}dt+%
\underset{0}{\overset{T}{\int }}\left\Vert \nabla
(w_{m_{n}}^{(T)}(t)-w_{m_{k}}^{(T)}(t))\right\Vert _{L^{2}(\Omega )}^{2}dt
\end{equation*}%
\begin{equation*}
\leq c_{5}+c_{6}\underset{0}{\overset{T}{\int }}\left\Vert
w_{m_{n}}^{(T)}(t)-w_{m_{k}}^{(T)}(t)\right\Vert _{L^{2}(\Omega )}^{2}dt+
\end{equation*}%
\begin{equation}
+\left\vert \underset{0}{\overset{T}{\int }}\left\langle
f(w_{m_{n}t}^{(T)}(t))-f(w_{m_{k}t}^{(T)}(t)),w_{m_{n}}^{(T)}(t)-w_{m_{k}}^{(T)}(t)\right\rangle dt\right\vert
\text{.}  \tag{4.8}
\end{equation}%
Integrating (4.7) over $(0,T)$ with respect to $s$ and taking (4.8) into
consideration, we obtain%
\begin{equation*}
T\left\Vert w_{m_{n}t}^{(T)}(T)-w_{m_{k}t}^{(T)}(T)\right\Vert
_{L^{2}(\Omega )}^{2}+T\left\Vert \nabla
(w_{m_{n}}^{(T)}(T)-w_{m_{k}}^{(T)}(T))\right\Vert _{L^{2}(\Omega )}^{2}
\end{equation*}%
\begin{equation*}
\leq c_{5}+c_{7}(1+T)\underset{0}{\overset{T}{\int }}\left\Vert
w_{m_{n}}^{(T)}(t)-w_{m_{k}}^{(T)}(t)\right\Vert _{L^{2}(\Omega )}^{2}dt
\end{equation*}%
\begin{equation}
+\left\vert \underset{0}{\overset{T}{\int }}\left\langle
f(w_{m_{n}t}^{(T)}(t))-f(w_{m_{k}t}^{(T)}(t)),w_{m_{n}}^{(T)}(t)-w_{m_{k}}^{(T)}(t)\right\rangle dt\right\vert
\text{.}  \tag{4.9}
\end{equation}%
By using the compact embedding theorem (see \cite{22}), from (4.5), it
follows that
\begin{equation*}
\underset{n,k\rightarrow \infty }{\lim }\underset{0}{\overset{T}{\int }}%
\left\Vert w_{m_{n}}^{(T)}(t)-w_{m_{k}}^{(T)}(t)\right\Vert _{L^{2}(\Omega
)}^{2}dt=0
\end{equation*}%
and taking into account (4.4), we have
\begin{equation*}
\underset{n\rightarrow \infty }{\lim \sup }\underset{k\rightarrow \infty }{%
\lim \sup }\left\vert \underset{0}{\overset{T}{\int }}\left\langle
f(w_{m_{n}t}^{(T)}(t))-f(w_{m_{k}t}^{(T)}(t)),w_{m_{n}}^{(T)}(t)-w_{m_{k}}^{(T)}(t)\right\rangle dt\right\vert
\end{equation*}%
\begin{equation*}
\leq \underset{n\rightarrow \infty }{\lim \sup }\underset{k\rightarrow
\infty }{\lim \sup }\left\vert \underset{0}{\overset{T}{\int }}\underset{%
\left\{ x:x\in \Omega \text{ }\left\vert w_{m_{n}t}(t,x)\right\vert
>1\right\} }{\int }\left\langle
f(w_{m_{n}t}^{(T)}(t)),w_{m_{n}}^{(T)}(t)-w_{m_{k}}^{(T)}(t)\right\rangle
dt\right\vert
\end{equation*}%
\begin{equation*}
+\underset{n\rightarrow \infty }{\lim \sup }\underset{k\rightarrow \infty }{%
\lim \sup }\left\vert \underset{0}{\overset{T}{\int }}\underset{\left\{
x:x\in \Omega \text{ }\left\vert w_{m_{k}t}(t,x)\right\vert >1\right\} }{%
\int }\left\langle
f(w_{m_{k}t}^{(T)}(t)),w_{m_{n}}^{(T)}(t)-w_{m_{k}}^{(T)}(t)\right\rangle
dt\right\vert
\end{equation*}%
\begin{equation*}
\leq c_{8}\underset{n\rightarrow \infty }{\lim \sup }\underset{0}{\overset{T}%
{\int }}\left\langle
f_{1}(w_{m_{n}t}^{(T)}(t)),w_{m_{n}t}^{(T)}(t)\right\rangle dt+c_{8}\underset%
{n\rightarrow \infty }{\lim \sup }\underset{0}{\overset{T}{\int }}\left\Vert
w_{m_{n}t}^{(T)}(t)\right\Vert _{L^{2}(\Omega )}^{2}dt\leq c_{9}\text{.}
\end{equation*}%
Hence, passing to the limit in (4.9), we obtain%
\begin{equation*}
\underset{n\rightarrow \infty }{\lim \sup }\underset{k\rightarrow \infty }{%
\lim \sup }\left\Vert S(t_{m_{n}})\varphi _{m_{n}}-S(t_{m_{k}})\varphi
_{m_{k}}\right\Vert _{H_{0}^{1}(\Omega )\times L^{2}(\Omega )}\leq \frac{%
c_{10}}{\sqrt{T}},\text{ \ \ }\forall T>0
\end{equation*}%
and consequently%
\begin{equation}
\underset{n\rightarrow \infty }{\lim }\underset{k\rightarrow \infty }{\lim
\sup }\left\Vert S(t_{m_{n}})\varphi _{m_{n}}-S(t_{m_{k}})\varphi
_{m_{k}}\right\Vert _{H_{0}^{1}(\Omega )\times L^{2}(\Omega )}=0\text{.}
\tag{4.10}
\end{equation}%
So, passing to the limit in the inequality%
\begin{equation*}
\left\Vert S(t_{m_{n}})\varphi _{m_{n}}-S(t_{m_{\nu }})\varphi _{m_{\nu
}}\right\Vert _{H_{0}^{1}(\Omega )\times L^{2}(\Omega )}
\end{equation*}%
\begin{equation*}
\leq \underset{k\rightarrow \infty }{\lim \sup }\left\Vert
S(t_{m_{n}})\varphi _{m_{n}}-S(t_{m_{k}})\varphi _{m_{k}}\right\Vert
_{H_{0}^{1}(\Omega )\times L^{2}(\Omega )}+\underset{k\rightarrow \infty }{%
\lim \sup }\left\Vert S(t_{m_{k}})\varphi _{m_{k}}-S(t_{m_{\nu }})\varphi
_{m_{\nu }}\right\Vert _{H_{0}^{1}(\Omega )\times L^{2}(\Omega )}
\end{equation*}%
and taking (4.10) into consideration, we get%
\begin{equation*}
\underset{n,\nu \rightarrow \infty }{\lim }\left\Vert S(t_{m_{n}})\varphi
_{m_{n}}-S(t_{m_{\nu }})\varphi _{m_{\nu }}\right\Vert _{H_{0}^{1}(\Omega
)\times L^{2}(\Omega )}=0.
\end{equation*}%
Thus, the subsequence $\left\{ S(t_{m_{n}})\varphi _{m_{n}}\right\}
_{n=1}^{\infty }$ is a Cauchy sequence in $H_{0}^{1}(\Omega )\times
L^{2}(\Omega )$ and consequently converges. By the same way, we can show
that every subsequence of $\left\{ S(t_{m})\varphi _{m}\right\}
_{m=1}^{\infty }$ has a convergent subsequence in $H_{0}^{1}(\Omega )\times
L^{2}(\Omega )$. This gives us relatively compactness of $\left\{
S(t_{m})\varphi _{m}\right\} _{m=1}^{\infty }$ in $H_{0}^{1}(\Omega )\times
L^{2}(\Omega )$.

\textit{Step2. }Now, let us prove relatively compactness of $\left\{
w_{m}(t_{m})\right\} _{m=1}^{\infty }$ in $L^{\infty }(\Omega )$, where $%
(w_{m}(t),w_{mt}(t)=S(t)\varphi _{m}$. By the relatively compactness of $%
\left\{ S(t_{m})\varphi _{m}\right\} _{m=1}^{\infty }$ in $H_{0}^{1}(\Omega
)\times L^{2}(\Omega )$, for any $\delta >0$ there exist $T_{\delta }>0$ and
$M_{\delta }>0$ such that%
\begin{equation}
\underset{\left\{ x:x\in \Omega \text{ }\left\vert w_{mt}(t,x)\right\vert
>M_{\delta }\right\} }{\int }\left( \left\vert \nabla w_{m}(t,x)\right\vert
^{2}+\left\vert w_{mt}(t,x)\right\vert ^{2}\right) dx<\delta ,\text{ \ }%
\forall t\geq T_{\delta }\text{.}  \tag{4.11}
\end{equation}%
Testing the equation%
\begin{equation*}
w_{mtt}-\Delta w_{mt}+f(w_{mt})-\Delta w_{m}+g(w_{m})=h
\end{equation*}%
by $\left\{
\begin{array}{c}
w_{mt}+M_{\delta },\text{ \ \ }w_{mt}\leq -M_{\delta }, \\
0,\text{ \ \ \ \ \ \ \ \ \ \ \ \ \ \ }\left\vert w_{mt}\right\vert \leq
M_{\delta }, \\
w_{mt}-M_{\delta },\text{ \ \ \ }w_{mt}\geq M_{\delta }\text{ \ }%
\end{array}%
\right. $ on $(s,s+1)\times \Omega $ and taking into account (4.11), we find%
\begin{equation*}
\underset{s}{\overset{s+1}{\int }}\underset{\left\{ x:x\in \Omega \text{ }%
w_{mt}(t,x)>M_{\delta }\right\} }{\int }f_{1}(w_{mt}(t,x))(w_{mt}(t,x)-M_{%
\delta })dxdt
\end{equation*}%
\begin{equation*}
+\underset{s}{\overset{s+1}{\int }}\underset{\left\{ x:x\in \Omega \text{ }%
w_{mt}(t,x)<-M_{\delta }\right\} }{\int }f_{1}(w_{mt}(t,x))(w_{mt}(t,x)+M_{%
\delta })dxdt\leq c_{11}\delta ,\text{ \ }\forall s\geq T_{\delta }\text{,}
\end{equation*}%
and consequently%
\begin{equation}
\underset{s}{\overset{s+1}{\int }}\underset{\left\{ x:x\in \Omega \text{ }%
\left\vert w_{mt}(t,x)\right\vert >2M_{\delta }\right\} }{\int }%
f_{1}(w_{mt}(t,x))w_{mt}(t,x)dxdt\leq 2c_{11}\delta ,\text{ \ }\forall s\geq
T_{\delta }\text{.}  \tag{4.12}
\end{equation}%
Now, denote $w_{m}^{\tau }(t,x)$ $=w_{m}(t+\tau ,x)$, $f_{1}^{M}(y)=\left\{
\begin{array}{c}
f_{1}(w),\text{ \ \ }\left\vert y\right\vert \leq M \\
0,\text{ \ \ \ \ \ \ }\left\vert y\right\vert >M%
\end{array}%
\right. $, $\Gamma _{\varepsilon }(t,x)=(1+\varepsilon +\lambda
_{1})w_{mt}^{\tau }(t,x)+w_{m}^{\tau }(t,x)-g(w_{m}^{\tau
}(t,x))-f_{1}^{M}(w_{mt}^{\tau }(t,x))$\ $+h(x)$ and $\Phi
(y)=f_{1}(y)-f_{1}^{M}(y)+\varepsilon y$, where $M\geq 1$ and $\varepsilon
\in (0,1)$. Decompose $w_{m}^{\tau }(t)$ as $w_{m}^{\tau }(t)=v_{\varepsilon
m}^{\tau }(t)+u_{\varepsilon m}^{\tau }(t)$, where $v_{\varepsilon m}^{\tau
}(t)$ and $u_{\varepsilon m}^{\tau }(t)$ are solutions of the problems%
\begin{equation*}
\left\{
\begin{array}{c}
v_{\varepsilon mtt}^{\tau }-\Delta v_{\varepsilon mt}^{\tau }+v_{\varepsilon
mt}^{\tau }-\Delta v_{\varepsilon m}^{\tau }=\Gamma _{\varepsilon }(t,x)%
\text{\ \ \ \ in \ }(0,1)\times \Omega \text{,} \\
v_{\varepsilon m}^{\tau }=0\text{ \ \ \ \ \ \ \ \ \ \ \ \ \ \ \ \ \ \ \ \ \
\ \ \ \ \ \ \ \ \ \ \ \ \ \ \ \ \ \ \ \ \ \ on \ }(0,1)\times \partial
\Omega \text{,} \\
v_{\varepsilon m}^{\tau }(0,\cdot )=w_{m}^{\tau }(0),\text{ \ \ \ \ }%
v_{\varepsilon mt}^{\tau }(0,\cdot )=w_{mt}^{\tau }(0)\text{\ \ \ \ \ \ \ \
\ \ \ \ in \ }\Omega \text{,}%
\end{array}%
\right.
\end{equation*}%
and%
\begin{equation*}
\left\{
\begin{array}{c}
u_{\varepsilon mtt}^{\tau }-\Delta u_{\varepsilon mt}^{\tau }+u_{\varepsilon
mt}^{\tau }-\Delta u_{\varepsilon m}^{\tau }=-\Phi _{\varepsilon
}(w_{mt}^{\tau })\text{\ \ \ in \ }(0,1)\times \Omega \text{,} \\
u_{\varepsilon m}^{\tau }=0\text{ \ \ \ \ \ \ \ \ \ \ \ \ \ \ \ \ \ \ \ \ \
\ \ \ \ \ \ \ \ \ \ \ \ \ \ \ \ \ \ \ \ \ \ \ \ \ on \ }(0,1)\times \partial
\Omega \text{,} \\
u_{\varepsilon m}^{\tau }(0,\cdot )=0,\text{ \ \ \ \ \ \ \ \ \ \ }%
u_{\varepsilon mt}^{\tau }(0,\cdot )=0\text{\ \ \ \ \ \ \ \ \ \ \ \ \ \ \ \
\ \ in \ }\Omega \text{. \ \ \ \ \ }%
\end{array}%
\right.
\end{equation*}%
By using the techniques of the proof of Lemma 3.2, one can show that%
\begin{equation}
\left\Vert v_{\varepsilon m}^{\tau }(1)-e^{-1}w_{m}^{\tau }(0)\right\Vert
_{H^{s}(\Omega )}\leq \frac{\widehat{c}_{1}}{2-s}(1+\left\Vert
f_{1}\right\Vert _{C[-M,M]}),\text{ \ \ }\forall s\in \lbrack 0,2)\text{.}
\tag{4.13}
\end{equation}%
Also, using the arguments of the proof of Lemma 3.3, we have%
\begin{equation*}
\left\Vert u_{\varepsilon m}^{\tau }(t)\right\Vert _{L^{\infty }(\Omega
)}\leq \frac{2\varepsilon }{(2-\alpha )(1-\alpha )}
\end{equation*}%
\begin{equation}
+\frac{1}{4\pi \alpha }\left( \underset{\Phi ^{-1}(\varepsilon )}{\overset{%
\infty }{\int }}\frac{\Phi ^{\prime }(\nu )}{\nu \Phi (\nu )}d\nu +\underset{%
-\infty }{\overset{\Phi ^{-1}(-\varepsilon )}{\int }}\frac{\Phi ^{\prime
}(\nu )}{\nu \Phi (\nu )}d\nu \right) \underset{0}{\overset{1}{\int }}%
\left\langle \Phi (w_{mt}^{\tau }(s)),w_{mt}^{\tau }(s)\right\rangle ds,%
\text{ \ }\forall t\in \lbrack 0,1]\text{,}  \tag{4.14}
\end{equation}%
where $\alpha \in (0,1)$. By the definition of $\Phi $, it follows that%
\begin{equation*}
\Phi ^{-1}(\varepsilon )=1\text{ and }\Phi ^{-1}(-\varepsilon )=-1\text{.}
\end{equation*}%
Hence,%
\begin{equation*}
\underset{\Phi ^{-1}(\varepsilon )}{\overset{\infty }{\int }}\frac{\Phi
^{\prime }(\nu )}{\nu \Phi (\nu )}d\nu +\underset{-\infty }{\overset{\Phi
^{-1}(-\varepsilon )}{\int }}\frac{\Phi ^{\prime }(\nu )}{\nu \Phi (\nu )}%
d\nu =2\underset{1}{\overset{M}{\int }}\frac{1}{v^{2}}dv+\underset{M}{%
\overset{\infty }{\int }}\frac{f_{1}^{\prime }(v)+\varepsilon }{v\left(
f_{1}(v)+\varepsilon v\right) }dv
\end{equation*}%
\begin{equation*}
+\underset{-\infty }{\overset{-M}{\int }}\frac{f_{1}^{\prime
}(v)+\varepsilon }{v\left( f_{1}(v)+\varepsilon v\right) }dv\leq 2\underset{1%
}{\overset{\infty }{\int }}\frac{1}{v^{2}}dv+\underset{1}{\overset{\infty }{%
\int }}\frac{f_{1}^{\prime }(v)}{vf_{1}(v)}dv+\underset{-\infty }{\overset{-1%
}{\int }}\frac{f_{1}^{\prime }(v)}{vf_{1}(v)}dv\leq \widehat{c}_{2}\text{.}
\end{equation*}%
Taking into account the last estimate in (4.14), we obtain%
\begin{equation*}
\left\Vert u_{\varepsilon m}^{\tau }(t)\right\Vert _{L^{\infty }(\Omega
)}\leq \varepsilon \widehat{c}_{3}+\underset{0}{\overset{1}{\int }}\underset{%
\left\{ x:x\in \Omega \text{ }\left\vert w_{mt}^{\tau }(t,x)\right\vert
>M\right\} }{\int }f_{1}(w_{mt}^{\tau }(t,x))w_{mt}^{\tau }(t,x)dxdt
\end{equation*}%
\begin{equation}
=\varepsilon \widehat{c}_{3}+\underset{\tau }{\overset{\tau +1}{\int }}%
\underset{\left\{ x:x\in \Omega \text{ }\left\vert w_{mt}(s,x)\right\vert
>M\right\} }{\int }f_{1}(w_{mt}(s,x))w_{mt}(s,x)dxdt,\text{ \ }\forall t\in
\lbrack 0,1]\text{ and }\forall \tau \geq 0\text{.}  \tag{4.15}
\end{equation}%
Thus, by (4.12)-(4.15), we conclude that for any $\delta >0$ there exist $%
\widetilde{M}_{\delta }>0$ and $\widetilde{T}_{\delta }>0$ such that%
\begin{equation*}
w_{m}(\tau +1)-e^{-1}w_{m}(\tau )\in O_{\delta }^{\infty }(B(0,r(\widetilde{M%
}_{\delta },s)),\text{ }\forall \tau \geq \widetilde{T}_{\delta }\text{,}
\end{equation*}%
where $r(M_{\delta },s)=\frac{\widehat{c}_{1}}{2-s}(1+\left\Vert
f_{1}\right\Vert _{C[-M,M]})$, $B(0,r)=\left\{ u:u\in H^{s}(\Omega
),\left\Vert u\right\Vert _{H^{s}(\Omega )}<r\right\} $, $s\in (1,2)$, and $%
O_{\delta }^{\infty }(B(0,r)$ is $\delta $-neighbourhood of $B(0,r)$ in $%
L^{\infty }(\Omega )$. Since%
\begin{equation*}
w_{m}(\tau +n)-e^{-n}w_{m}(\tau )
\end{equation*}%
\begin{equation*}
=w_{m}(\tau +n)-e^{-1}w_{m}(\tau +n-1)+e^{-1}(w_{m}(\tau
+n-1)-e^{-1}w_{m}(\tau +n-2))+...
\end{equation*}%
\begin{equation*}
+e^{-n+1}(w_{m}(\tau +1)-e^{-1}w_{m}(\tau ))\text{,}
\end{equation*}%
by the last conclusion, we have%
\begin{equation*}
w_{m}(\tau +n)-e^{-n}w_{m}(\tau )\in O_{n\delta }^{\infty }(B(0,nr(%
\widetilde{M}_{\delta },s)),\text{ }\forall \tau \geq \widetilde{T}_{\delta }%
\text{.}
\end{equation*}%
By (4.1), for any $\varepsilon >0$ there exists $n_{\varepsilon }\in
\mathbb{N}
$ such that
\begin{equation*}
e^{-n_{\varepsilon }}\left\Vert w_{m}(\tau )\right\Vert _{L^{\infty }(\Omega
)}<\frac{\varepsilon }{3},\text{ \ }\forall \tau \geq 0\text{ and }\forall
m\in
\mathbb{N}
\text{.}
\end{equation*}%
Choosing $\delta =\frac{\varepsilon }{3n_{\varepsilon }}$ , by the last two
relations, we get%
\begin{equation}
w_{m}(\tau +n_{\varepsilon })\in O_{\frac{2\varepsilon }{3}}^{\infty
}(B(0,r_{\varepsilon }(s)),\text{ }\forall \tau \geq \widetilde{T}_{\delta }%
\text{ and }\forall s\in (1,2)\text{.}  \tag{4.16}
\end{equation}%
where $r_{\varepsilon }(s)=$ $n_{\varepsilon }r(\widetilde{M}_{\delta },s)$.
By the compact embedding $H^{s}(\Omega )\subset L^{\infty }(\Omega )$, for $%
s>1$, it follows that $B(0,r_{\varepsilon }(s))$ is relatively compact
subset of $L^{\infty }(\Omega )$. Hence, by (4.16), the set $\left\{
w_{m}(\tau +n_{\varepsilon })\right\} _{\tau \geq \widetilde{T}_{\delta }}$
and particularly, $\left\{ w_{m}(t_{m})\right\} _{m=1}^{\infty }$ has a
finite $\varepsilon $-net in $L^{\infty }(\Omega )$. Since $\varepsilon $ is
arbitrary positive number, we obtain relatively compactness of $\left\{
w_{m}(t_{m})\right\} _{m=1}^{\infty }$ in $L^{\infty }(\Omega )$, which,
together with the compactness proved in \textit{Step1}, completes the proof.
\end{proof}

Since, by (2.2) and (3.14), the problem (2.1) admits a strict Lyapunov
function $L(w(t))=\frac{1}{2}\left\Vert w_{t}(t)\right\Vert _{L^{2}(\Omega
)}^{2}+\frac{1}{2}\left\Vert \nabla w(t)\right\Vert _{L^{2}(\Omega
)}^{2}+\left\langle G(w(t)),1\right\rangle -\left\langle h,w(t)\right\rangle
$, by Lemma 4.1-4.2 and \cite[Corollary 2.29]{25}, we have Theorem 2.2.

\appendix

\section{\protect\bigskip}

\begin{lemma}
Let $\Omega \subset R^{N}$ be a bounded domain with smooth boundary. Then
for $u_{0}\in H_{0}^{1}(\Omega )\cap L^{\infty }(\Omega )$, there exists a
sequence $\left\{ u_{n}\right\} \subset C_{0}^{\infty }(\Omega )$ such that%
\begin{equation*}
\underset{n\rightarrow \infty }{\lim }\left\Vert u_{0}-u_{n}\right\Vert
_{H_{0}^{1}(\Omega )}=0\text{ \ and } \ \underset{n}{\sup }\left\Vert
u_{n}\right\Vert _{C(\overline{\Omega })}\leq \left\Vert u_{0}\right\Vert
_{L^{\infty }(\Omega )}\text{.}
\end{equation*}
\end{lemma}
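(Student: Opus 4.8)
The plan is to first approximate $u_0$ in $H_0^1(\Omega)$ by smooth compactly supported functions in the usual way, then restore the pointwise bound by truncation, and finally recover smoothness by mollification. Write $M=\left\Vert u_0\right\Vert_{L^\infty(\Omega)}$; if $M=0$ we simply take $u_n\equiv 0$, so assume $M>0$. By the density of $C_0^\infty(\Omega)$ in $H_0^1(\Omega)$, choose $\phi_n\in C_0^\infty(\Omega)$ with $\left\Vert\phi_n-u_0\right\Vert_{H_0^1(\Omega)}\to 0$. These need not respect the bound $\left\Vert\phi_n\right\Vert_{L^\infty(\Omega)}\le M$, so I introduce the truncation $T_M(s)=\max\{-M,\min\{M,s\}\}$ and set $\psi_n=T_M(\phi_n)$. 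Since $T_M$ is Lipschitz with $T_M(0)=0$, each $\psi_n\in H_0^1(\Omega)$ with $\left\Vert\psi_n\right\Vert_{L^\infty(\Omega)}\le M$; moreover $\psi_n$ vanishes wherever $\phi_n$ does, so $\mathrm{supp}\,\psi_n\subseteq\mathrm{supp}\,\phi_n\subset\subset\Omega$.

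The key step is to show $\psi_n\to u_0$ in $H_0^1(\Omega)$. Since $|u_0|\le M$ a.e., we have $T_M(u_0)=u_0$, so this amounts to the continuity of $T_M$ at $u_0$. The $L^2$ part is immediate from the Lipschitz bound $\left\Vert\psi_n-u_0\right\Vert_{L^2(\Omega)}\le\left\Vert\phi_n-u_0\right\Vert_{L^2(\Omega)}$. For the gradients I would use Stampacchia's formula $\nabla T_M(v)=\chi_{\{|v|<M\}}\nabla v$ to write
\begin{equation*}
\nabla\psi_n-\nabla u_0=\chi_{\{|\phi_n|<M\}}\left(\nabla\phi_n-\nabla u_0\right)+\nabla u_0\left(\chi_{\{|\phi_n|<M\}}-\chi_{\{|u_0|<M\}}\right).
\end{equation*}
The first term tends to $0$ in $L^2(\Omega)$ because $\left\Vert\nabla\phi_n-\nabla u_0\right\Vert_{L^2(\Omega)}\to 0$ and the indicator is bounded by $1$. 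For the second term, along any subsequence on which $\phi_n\to u_0$ a.e., the indicator difference converges to $0$ a.e. on $\{|u_0|\neq M\}$, while on $\{|u_0|=M\}$ one has $\nabla u_0=0$ a.e. (the gradient of a Sobolev function vanishes a.e. on its level sets); dominated convergence (the integrand is bounded by $2|\nabla u_0|\in L^2(\Omega)$) then gives convergence to $0$. Since every subsequence admits a further subsequence along which the limit is $0$, the full sequence converges, whence $\psi_n\to u_0$ in $H_0^1(\Omega)$.

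Finally I would mollify. Fix a standard nonnegative mollifier $\rho_\varepsilon$ with $\int_{R^{N}}\rho_\varepsilon\,dx=1$. Since each $\psi_n$ is Lipschitz with compact support in $\Omega$, for $\varepsilon$ smaller than $\mathrm{dist}(\mathrm{supp}\,\psi_n,\partial\Omega)$ the convolution $\psi_n*\rho_\varepsilon$ lies in $C_0^\infty(\Omega)$, and since convolution against a probability density does not increase the sup-norm, $\left\Vert\psi_n*\rho_\varepsilon\right\Vert_{C(\overline{\Omega})}\le\left\Vert\psi_n\right\Vert_{L^\infty(\Omega)}\le M$. As $\psi_n*\rho_\varepsilon\to\psi_n$ in $H_0^1(\Omega)$ when $\varepsilon\to 0$, for each $n$ I pick $\varepsilon_n$ so small that $u_n:=\psi_n*\rho_{\varepsilon_n}\in C_0^\infty(\Omega)$ and $\left\Vert u_n-\psi_n\right\Vert_{H_0^1(\Omega)}<1/n$. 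Then $\left\Vert u_n-u_0\right\Vert_{H_0^1(\Omega)}\le 1/n+\left\Vert\psi_n-u_0\right\Vert_{H_0^1(\Omega)}\to 0$ and $\sup_n\left\Vert u_n\right\Vert_{C(\overline{\Omega})}\le M=\left\Vert u_0\right\Vert_{L^\infty(\Omega)}$, as required.

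I expect the main obstacle to be the continuity of the truncation in $H_0^1(\Omega)$, that is, controlling the indicator-difference term in the gradient decomposition above; the reduction to the a.e. vanishing of $\nabla u_0$ on the level set $\{|u_0|=M\}$ together with dominated convergence is precisely what makes it work. The remaining bookkeeping (compact support of $\psi_n$, the non-increase of the sup-norm under mollification, and the diagonal choice of $\varepsilon_n$) is routine.
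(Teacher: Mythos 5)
Your proof is correct and follows essentially the same route as the paper: approximate in $H_{0}^{1}(\Omega )$ by $C_{0}^{\infty }(\Omega )$ functions, truncate at $M=\left\Vert u_{0}\right\Vert _{L^{\infty }(\Omega )}$, verify that the truncations still converge to $u_{0}$ in $H_{0}^{1}(\Omega )$ by exploiting that $\nabla u_{0}$ vanishes a.e.\ on the level set $\left\{ \left\vert u_{0}\right\vert =M\right\} $ and that $\left\{ \left\vert u_{0}\right\vert >M\right\} $ is null, and then mollify to restore smoothness while preserving the sup bound. The only cosmetic difference is that you control the bad set via Stampacchia's chain rule and dominated convergence along a.e.-convergent subsequences, whereas the paper phrases the same estimate through a reverse-Fatou bound over $\underset{m}{\cap }\underset{k\geq m}{\cup }\left\{ \left\vert v_{n_{k}}\right\vert >M\right\} $.
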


\begin{proof}
By the definition of $H_{0}^{1}(\Omega )$, there exists $\left\{
v_{n}\right\} \subset C_{0}^{\infty }(\Omega )$ such that
\begin{equation*}
v_{n}\rightarrow u_{0}\text{ strongly in }H_{0}^{1}(\Omega )\text{.}
\end{equation*}%
Then there exists a subsequence\ $\left\{ v_{n_{k}}\right\} \subset
C_{0}^{\infty }(\Omega )$ such that
\begin{equation}
v_{n_{k}}\rightarrow u_{0}\text{ \ a.e. in }\Omega \text{.}  \tag{A.1}
\end{equation}%
Let $M_{0}=\left\Vert u_{0}\right\Vert _{L^{\infty }(\Omega )}$. Setting $%
w_{k}(x):=\left\{
\begin{array}{c}
-M_{0},\text{ \ \ }v_{n_{k}}(x)<-M_{0}, \\
v_{n_{k}},\text{ \ \ }\left\vert v_{n_{k}}(x)\right\vert \leq M_{0}, \\
M_{0},\text{ \ \ }v_{n_{k}}(x)>M_{0}\text{\ \ \ }%
\end{array}%
\right. $we have $w_{k}\in H_{0}^{1}(\Omega )\cap C(\overline{\Omega })$, $%
\underset{k}{\sup }\left\Vert w_{k}\right\Vert _{C(\overline{\Omega })}\leq
M_{0}$, $\overline{\left\{ x:x\in \Omega ,\text{ }w_{k}\neq 0\right\} }%
\subset \Omega $ and%
\begin{equation*}
\underset{k\rightarrow \infty }{\lim \sup }\left\Vert \nabla
(u_{0}-w_{k})\right\Vert _{L^{2}(\Omega )}^{2}=\underset{k\rightarrow \infty
}{\lim \sup }\underset{\left\{ x:\text{ }\left\vert v_{n_{k}}(x)\right\vert
>M_{0}\right\} }{\int }\left\vert \nabla u_{0}(x)\right\vert ^{2}dx
\end{equation*}%
\begin{equation}
\leq \underset{m\rightarrow \infty }{\lim \sup }\underset{\underset{k\geq m}{%
\cup }\left\{ x:\text{ }\left\vert v_{n_{k}}(x)\right\vert >M_{0}\right\} }{%
\int }\left\vert \nabla u_{0}(x)\right\vert ^{2}dx=\underset{\underset{m}{%
\cap }\underset{k\geq m}{\cup }\left\{ x:\text{ }\left\vert
v_{n_{k}}(x)\right\vert >M_{0}\right\} }{\int }\left\vert \nabla
u_{0}(x)\right\vert ^{2}dx.  \tag{A.2}
\end{equation}%
Let $A$ be a subset of $\Omega $, where a sequence $\left\{
v_{n_{k}}\right\} $ does not converge to $u_{0}$. Then we have
\begin{equation}
\underset{m}{\cap }\underset{k\geq m}{\cup }\left\{ x:\text{ }\left\vert
v_{n_{k}}(x)\right\vert >M_{0}\right\} \subset \left\{ x:\text{ }\left\vert
u_{0}(x)\right\vert \geq M_{0}\right\} \cup A.  \tag{A.3}
\end{equation}%
Since, by (A.1), $mes(A)=0$, taking into account (A.3) in (A.2), we find that%
\begin{equation*}
\underset{k\rightarrow \infty }{\lim \sup }\left\Vert \nabla
(u_{0}-w_{k})\right\Vert _{L^{2}(\Omega )}^{2}\leq \underset{\left\{ x:\text{
}\left\vert u_{0}(x)\right\vert \geq M_{0}\right\} \cup A}{\int }\left\vert
\nabla u_{0}(x)\right\vert ^{2}dx
\end{equation*}%
\begin{equation*}
=\underset{\left\{ x:\text{ }\left\vert u_{0}(x)\right\vert \geq
M_{0}\right\} }{\int }\left\vert \nabla u_{0}(x)\right\vert ^{2}dx.
\end{equation*}%
Also, considering
\begin{equation*}
mes\left( \left\{ x:\left\vert u_{0}(x)\right\vert >M_{0}\right\} \right) =0%
\text{ }\ \text{and}\underset{\left\{ x:\text{ }\left\vert
u_{0}(x)\right\vert =M_{0}\right\} }{\int }\left\vert \nabla
u_{0}(x)\right\vert ^{2}dx=0,
\end{equation*}%
in the last inequality, we obtain%
\begin{equation*}
w_{k}\rightarrow u_{0}\text{ strongly in }H_{0}^{1}(\Omega ).
\end{equation*}%
Now, to complete the proof, it is sufficient to show that for any $w\in
H_{0}^{1}(\Omega )\cap C(\overline{\Omega })$ with \newline
$\overline{\left\{ x:x\in \Omega ,\text{ }w\neq 0\right\} }\subset \Omega $,
there exists $\left\{ u_{n}\right\} \subset C_{0}^{\infty }(\Omega )$ such
that%
\begin{equation}
\underset{n\rightarrow \infty }{\lim }\left\Vert w-u_{n}\right\Vert
_{H_{0}^{1}(\Omega )}=0\text{ and }\underset{n}{\sup }\left\Vert
u_{n}\right\Vert _{C(\overline{\Omega })}\leq \left\Vert w\right\Vert _{C(%
\overline{\Omega })}  \tag{A.4}
\end{equation}%
Denoting $\overline{w}(x)=\left\{
\begin{array}{c}
w(x),\text{ }x\in \Omega \\
0,\text{ }x\in R^{N}\backslash \Omega%
\end{array}%
\right. $ and $u_{n}(x)=(\rho _{n}\ast \overline{w})(x)$, we have $\overline{%
\left\{ x:x\in \Omega ,\text{ }u_{n}\neq 0\right\} }\subset \Omega $ for
sufficiently large $n$, $\left\{ u_{n}\right\} \subset C^{\infty }(R^{N})$,
\begin{equation*}
\underset{n\rightarrow \infty }{\lim }\left\Vert \overline{w}%
-u_{n}\right\Vert _{H^{1}(R^{2})}=0\text{ and }\underset{n}{\sup }\left\Vert
u_{n}\right\Vert _{L^{\infty }(R^{2})}\leq \left\Vert w\right\Vert _{C(%
\overline{\Omega })},
\end{equation*}%
where\ $\ast $ denotes the convolution, $\rho _{n}(x)=\left\{
\begin{array}{c}
Kn^{N}e^{-\frac{1}{1-n^{2}\left\vert x\right\vert ^{2}}},\text{ \ }%
\left\vert x\right\vert <\frac{1}{n}, \\
0,\text{ \ \ \ \ \ \ \ \ \ \ \ \ \ \ \ }\left\vert x\right\vert \geq \frac{1%
}{n}%
\end{array}%
\right. $, $n\in
\mathbb{N}
$ and $K^{-1}=\underset{\left\{ x:\text{ }\left\vert x\right\vert <1\right\}
}{\int }e^{-\frac{1}{1-\left\vert x\right\vert ^{2}}}dx$. So, the
restriction of the sequence $\left\{ u_{n}\right\} $ to $\Omega $ satisfies
(A.4), for sufficiently large $n$.
\end{proof}

\begin{lemma}
Let $Q\subset R^{N}$ be a measurable set with finite measure and $\varphi $
a continuous function on $R$ such that $\varphi (s)s\geq 0$ for every $s\in
R $. If
\begin{equation*}
u_{n}\rightarrow u\text{ a.e. in }Q\text{ \ and }\underset{n}{\sup }\underset%
{Q}{\int }\varphi (u_{n}(x))u_{n}(x)dx<\infty \text{,}
\end{equation*}%
then
\begin{equation}
\underset{n\rightarrow \infty }{\lim }\left\Vert \varphi (u_{n})-\varphi
(u)\right\Vert _{L^{1}(Q)}=0.  \tag{A.5}
\end{equation}
\end{lemma}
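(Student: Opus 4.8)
The plan is to upgrade the a.e.\ convergence to $L^{1}$ convergence by proving that the sequence $\{\varphi(u_{n})\}$ is uniformly integrable and then applying Vitali's convergence theorem on the finite measure space $Q$. Two observations set this up. First, since $\varphi$ is continuous, $u_{n}\rightarrow u$ a.e.\ gives $\varphi(u_{n})\rightarrow \varphi(u)$ a.e.\ in $Q$. Second, the sign condition $\varphi(s)s\geq 0$ means $\varphi(s)s=\left\vert \varphi(s)\right\vert \left\vert s\right\vert$, so the hypothesis reads $C:=\underset{n}{\sup}\int_{Q}\left\vert \varphi(u_{n})\right\vert \left\vert u_{n}\right\vert dx<\infty$.

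The decisive step is a truncation at a level $K>0$. On $\{x:\left\vert u_{n}(x)\right\vert \leq K\}$ continuity of $\varphi$ gives $\left\vert \varphi(u_{n})\right\vert \leq M_{K}$, where $M_{K}:=\underset{\left\vert s\right\vert \leq K}{\max}\left\vert \varphi(s)\right\vert$, so this part is controlled by the measure of the domain of integration. On the complementary set I would use the sign condition in the form
\[
\left\vert \varphi(u_{n}(x))\right\vert =\frac{\left\vert \varphi(u_{n}(x))\right\vert \left\vert u_{n}(x)\right\vert }{\left\vert u_{n}(x)\right\vert }\leq \frac{1}{K}\varphi(u_{n}(x))u_{n}(x),
\]
whence $\int_{\{\left\vert u_{n}\right\vert >K\}}\left\vert \varphi(u_{n})\right\vert dx\leq C/K$ uniformly in $n$. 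This tail bound, in which the energy bound and the sign condition together annihilate the large-value part independently of $n$, is exactly where the real content lies; I expect it to be the only genuine obstacle, the remaining steps being routine.

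With the splitting in hand, uniform integrability follows at once: given $\varepsilon >0$, fix $K$ with $C/K<\varepsilon /2$ and then $\delta >0$ with $M_{K}\delta <\varepsilon /2$, so that $mes(E)<\delta$ forces $\int_{E}\left\vert \varphi(u_{n})\right\vert dx\leq M_{K}\,mes(E)+C/K<\varepsilon$ for every $n$. Taking $E=Q$ in the same estimate bounds $\{\varphi(u_{n})\}$ in $L^{1}(Q)$, and Fatou's lemma then yields $\varphi(u)\in L^{1}(Q)$. Since $mes(Q)<\infty$, the a.e.\ convergence $\varphi(u_{n})\rightarrow \varphi(u)$ implies convergence in measure, and Vitali's theorem combined with the uniform integrability just established delivers $\underset{n\rightarrow \infty}{\lim}\left\Vert \varphi(u_{n})-\varphi(u)\right\Vert _{L^{1}(Q)}=0$, i.e.\ (A.5).
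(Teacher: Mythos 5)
Your proof is correct. The engine of both arguments is the same tail estimate: on the set where $\left\vert u_{n}\right\vert >K$, the sign condition gives $\left\vert \varphi (u_{n})\right\vert \leq \frac{1}{K}\varphi (u_{n})u_{n}$, so the energy bound kills the large-value part uniformly in $n$. Where you diverge from the paper is in the packaging. The paper does not invoke Vitali's theorem; it runs a hands-on Egorov argument: it extracts a set $Q_{\varepsilon }$ of almost full measure on which $u_{n}\rightarrow u$ uniformly, truncates with the sets $A_{k}=\{\left\vert u(x)\right\vert \leq k\}$ defined in terms of the \emph{limit} $u$ rather than $u_{n}$, and then uses the uniform convergence to transfer the bound from $u$ to $u_{n}$ (which is why the factor $\frac{1}{k-1}$ rather than $\frac{1}{k}$ appears there), finally juggling iterated $\limsup$'s in $\varepsilon $, $k$, $n$. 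Your route — establish uniform integrability of $\{\varphi (u_{n})\}$ by truncating directly on $\{\left\vert u_{n}\right\vert \leq K\}$ versus $\{\left\vert u_{n}\right\vert >K\}$, then cite Vitali on the finite measure space — is cleaner and more modular: truncating in terms of $u_{n}$ avoids the need for Egorov's uniform convergence altogether, and the bookkeeping collapses to a single two-parameter choice of $K$ and $\delta $. What the paper's version buys in exchange is self-containedness: it effectively reproves the relevant case of Vitali's theorem from Egorov and Fatou rather than quoting it. Both proofs are sound; yours is the one I would keep.
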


\begin{proof}
By the continuity of $\varphi $, we have
\begin{equation*}
\underset{Q}{\int }\left\vert \varphi (u_{n}(x))\right\vert dx\leq \underset{%
\left\{ x:x\in Q,\text{ }\left\vert u_{n}(x)\right\vert \leq 1\right\} }{%
\int }\left\vert \varphi (u_{n}(x))\right\vert dx
\end{equation*}%
\begin{equation*}
+\underset{\left\{ x:x\in Q,\text{ }\left\vert u_{n}(x)\right\vert
>1\right\} }{\int }\left\vert \varphi (u_{n}(x))\right\vert dx\leq
\left\Vert \varphi \right\Vert _{C[-1,1]}mes(Q)
\end{equation*}%
\begin{equation*}
+\underset{Q}{\int }\varphi (u_{n}(x))u_{n}(x)dx.
\end{equation*}%
So, $\varphi (u_{n})\in L^{1}(Q)$. Applying Fatou's lemma, we have
\begin{equation*}
\underset{Q}{\int }\varphi (u(x))u(x)dx\leq \underset{n\rightarrow \infty }{%
\lim \inf }\underset{Q}{\int }\varphi (u_{n}(x))u_{n}(x)dx<\infty ,
\end{equation*}%
which, as shown above, yields $\varphi (u)\in L^{1}(Q).$ Also, by Egorov's
theorem, for any $\varepsilon >0$, there exists $Q_{\varepsilon }\subset Q$
such that $mes(Q\backslash Q_{\varepsilon })<\varepsilon $ and%
\begin{equation*}
u_{n}\rightarrow u\text{ uniformly in }Q_{\varepsilon }.
\end{equation*}%
Now, denote $A_{k}=\left\{ x:x\in Q,\left\vert u(x)\right\vert \leq
k\right\} $ and $A_{nk}=\left\{ x:x\in Q,\left\vert u_{n}(x)\right\vert \leq
k\right\} $, for $k>1$. By the last approximation, we get
\begin{equation}
\varphi (u_{n})\rightarrow \varphi (u)\text{ uniformly in }Q_{\varepsilon
}\cap A_{k}.  \tag{A.6}
\end{equation}%
Since, for sufficiently large $n$,
\begin{equation*}
\underset{Q_{\varepsilon }\backslash A_{k}}{\int }\left\vert \varphi
(u_{n}(x))-\varphi (u(x))\right\vert dx\leq \frac{1}{k-1}\underset{Q}{\int }%
\varphi (u_{n}(x))u_{n}(x)dx
\end{equation*}%
\begin{equation*}
+\frac{1}{k}\underset{Q}{\int }\varphi (u(x))u(x)dx,
\end{equation*}%
using (A.6), we obtain%
\begin{equation*}
\underset{n\rightarrow \infty }{\lim \sup }\underset{Q}{\int }\left\vert
\varphi (u_{n}(x))-\varphi (u(x))\right\vert dx\leq \underset{n\rightarrow
\infty }{\lim \sup }\underset{Q_{\varepsilon }\cap A_{k}}{\int }\left\vert
\varphi (u_{n}(x))-\varphi (u(x))\right\vert dx
\end{equation*}%
\begin{equation*}
+\underset{n\rightarrow \infty }{\lim \sup }\underset{Q_{\varepsilon
}\backslash A_{k}}{\int }\left\vert \varphi (u_{n}(x))-\varphi
(u(x))\right\vert dx+\underset{n\rightarrow \infty }{\lim \sup }\underset{%
Q\backslash Q_{\varepsilon }}{\int }\left\vert \varphi (u_{n}(x))-\varphi
(u(x))\right\vert dx
\end{equation*}%
\begin{equation*}
\leq \frac{1}{k-1}\underset{n\rightarrow \infty }{\lim \sup }\underset{Q}{%
\int }\varphi (u_{n}(x))u_{n}(x)dx+\frac{1}{k}\underset{Q}{\int }\varphi
(u(x))u(x)dx
\end{equation*}%
\begin{equation*}
+\underset{n\rightarrow \infty }{\lim \sup }\underset{Q\backslash
Q_{\varepsilon }}{\int }\left\vert \varphi (u_{n}(x))-\varphi
(u(x))\right\vert dx.
\end{equation*}%
Passing to the limit as $k\rightarrow \infty $ and $\varepsilon \rightarrow
0 $, we find
\begin{equation}
\underset{n\rightarrow \infty }{\lim \sup }\underset{Q}{\int }\left\vert
\varphi (u_{n}(x))-\varphi (u(x))\right\vert dx\leq \underset{\varepsilon
\rightarrow 0}{\lim \sup }\underset{n\rightarrow \infty }{\lim \sup }%
\underset{Q\backslash Q_{\varepsilon }}{\int }\left\vert \varphi
(u_{n}(x))-\varphi (u(x))\right\vert dx.  \tag{A.7}
\end{equation}%
Now, let us estimate the right hand side of (A.7).%
\begin{equation*}
\underset{Q\backslash Q_{\varepsilon }}{\int }\left\vert \varphi
(u_{n}(x))-\varphi (u(x))\right\vert dx\leq \underset{Q\backslash
Q_{\varepsilon }}{\int }\left\vert \varphi (u_{n}(x))\right\vert dx+\underset%
{Q\backslash Q_{\varepsilon }}{\int }\left\vert \varphi (u(x))\right\vert dx
\end{equation*}%
\begin{equation*}
\leq \frac{1}{k}\underset{Q}{\int }\varphi (u_{n}(x))u_{n}(x)dx+\frac{1}{k}%
\underset{Q}{\int }\varphi (u(x))u(x)dx
\end{equation*}%
\begin{equation*}
+\underset{\left( Q\backslash Q_{\varepsilon }\right) \cap A_{nk}}{\int }%
\left\vert \varphi (u_{n}(x))\right\vert dx+\underset{\left( Q\backslash
Q_{\varepsilon }\right) \cap A_{k}}{\int }\left\vert \varphi
(u(x))\right\vert dx
\end{equation*}%
\begin{equation*}
\leq \frac{1}{k}\underset{Q}{\int }\varphi (u_{n}(x))u_{n}(x)dx+\frac{1}{k}%
\underset{Q}{\int }\varphi (u(x))u(x)dx
\end{equation*}%
\begin{equation*}
+2\left\Vert \varphi \right\Vert _{C[-k,k]}mes(Q\backslash Q_{\varepsilon })
\end{equation*}%
and consequently%
\begin{equation*}
\underset{\varepsilon \rightarrow 0}{\lim \sup }\underset{n\rightarrow
\infty }{\lim \sup }\underset{Q\backslash Q_{\varepsilon }}{\int }\left\vert
\varphi (u_{n}(x))-\varphi (u(x))\right\vert dx\leq \frac{1}{k}\underset{Q}{%
\int }\varphi (u(x))u(x)dx
\end{equation*}%
\begin{equation*}
+\frac{1}{k}\underset{n\rightarrow \infty }{\lim \sup }\underset{Q}{\int }%
\varphi (u_{n}(x))u_{n}(x)dx.
\end{equation*}%
Passing to the limit as $k\rightarrow \infty $, we obtain%
\begin{equation*}
\underset{\varepsilon \rightarrow 0}{\lim \sup }\underset{n\rightarrow
\infty }{\lim \sup }\underset{Q\backslash Q_{\varepsilon }}{\int }\left\vert
\varphi (u_{n}(x))-\varphi (u(x))\right\vert dx=0,
\end{equation*}%
which, together with (A.7), gives us (A.5).
\end{proof}

\begin{lemma}
Let $\Omega \subset R^{N}$ be a bounded domain with smooth boundary and \ $%
h\in L^{1}((0,T)\times \Omega )$. If $u\in C_{s}(0,T;L^{1}(\Omega ))\cap
L^{2}(0,T;H_{0}^{1}(\Omega ))$ with $\underset{t\searrow 0}{\lim }\left\Vert
u(t)\right\Vert _{L^{1}(\Omega )}$ $=0$, is a solution of%
\begin{equation}
\left\{
\begin{array}{c}
u_{t}(t,x)-\Delta u(s,x)=h(t,x),\text{ \ \ }(t,x)\in (0,T)\times \Omega ,%
\text{ \ \ \ \ \ \ \ \ \ \ \ \ \ } \\
u(t,x)=0,\text{ \ \ }(t,x)\in ((0,T)\times \partial \Omega )\cup (\left\{
0\right\} \times \Omega )\text{\ \ \ \ \ \ \ \ \ \ \ \ \ \ }%
\end{array}%
\right.  \tag{A.8}
\end{equation}%
then
\begin{equation}
\left\vert u(t,x)\right\vert \leq \frac{1}{(4\pi )^{\frac{N}{2}}}\underset{0}%
{\overset{t}{\int }}\frac{1}{(t-s)^{\frac{N}{2}}}\underset{\Omega }{\int }%
e^{-\frac{\left\vert x-y\right\vert ^{2}}{4(t-s)}}\left\vert
h(s,y)\right\vert dyds,\text{ \ a.e. in }(0,T)\times \Omega \text{,}
\tag{A.9}
\end{equation}%
where $C_{s}(0,T;L^{1}(\Omega ))=\left\{ u:u\in L^{\infty }(0,T;L^{1}(\Omega
)),\int\limits_{\Omega }\varphi (x)u(\cdot ,x)dx\in C[0,T]\text{ for every}%
\right. $\newline
$\left. \varphi \in L^{\infty }(\Omega )\right\} $.
\end{lemma}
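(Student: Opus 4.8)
The plan is to compare $u$ with the whole-space heat potential of the source and invoke the maximum principle. Write $P(t,x)=(4\pi t)^{-N/2}e^{-|x|^2/(4t)}$ for the Gaussian kernel on $R^N$, extend $h$ by zero outside $\Omega$, and define
\[
U(t,x)=\frac{1}{(4\pi)^{N/2}}\int_0^t\frac{1}{(t-s)^{N/2}}\int_\Omega e^{-\frac{|x-y|^2}{4(t-s)}}|h(s,y)|\,dy\,ds,
\]
which is precisely the right-hand side of (A.9). Since $U$ is the Duhamel representation of the whole-space solution with source $|h|\chi_\Omega$, it satisfies $U_t-\Delta U=|h|$ in $(0,T)\times\Omega$ together with $U\geq 0$ everywhere and $U(0,\cdot)=0$. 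Thus $U$ is a supersolution that dominates the parabolic boundary data of $u$.

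First I would establish (A.9) when the source is smooth, say $h\in C_0^\infty((0,T)\times\Omega)$, so that the solution of (A.8) is classical. Then $w_\pm:=U\pm u$ solves $\partial_t w_\pm-\Delta w_\pm=|h|\pm h\geq 0$ in $(0,T)\times\Omega$ with $w_\pm(0,\cdot)=0$ and $w_\pm\geq 0$ on $(0,T)\times\partial\Omega$ (because $u=0$ there while $U\geq 0$). The classical weak maximum principle for the heat equation then yields $w_\pm\geq 0$, i.e. $-U\leq u\leq U$, which is exactly (A.9) in the smooth case. Equivalently, this is the statement that the Dirichlet heat kernel $G_\Omega$ of $\Omega$ obeys $0\leq G_\Omega(t,x,y)\leq P(t,x-y)$, a standard consequence of the same comparison, whence (A.9) follows from the representation $u(t,x)=\int_0^t\int_\Omega G_\Omega(t-s,x,y)h(s,y)\,dy\,ds$ after taking absolute values inside the integral.

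To reach the stated $L^1$ setting I would approximate: pick $h_n\to h$ in $L^1((0,T)\times\Omega)$ with $h_n$ smooth, let $u_n$ be the corresponding classical solutions, so that $|u_n|\leq U_n$ pointwise, where $U_n$ is built from $|h_n|$ as above. By linearity of (A.8) and the energy estimate in the class $C_s(0,T;L^1(\Omega))\cap L^2(0,T;H_0^1(\Omega))$, one has $u_n\to u$, and along a subsequence a.e. in $(0,T)\times\Omega$; meanwhile $U_n\to U$ in $L^1((0,T)\times\Omega)$, since $|h|\mapsto U$ is bounded on $L^1$ by Young's inequality applied to the kernel $P$ (as $\int_{R^N}P(\tau)\,dx=1$ gives $\|U_n-U\|_{L^1}\leq T\,\|h_n-h\|_{L^1}$). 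Passing to the a.e. limit then gives $|u|\leq U$, i.e. (A.9).

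The main obstacle is the passage to the low-regularity regime: justifying solvability and the $L^1$-convergence $u_n\to u$ in the class $C_s(0,T;L^1)\cap L^2(0,T;H_0^1)$, after which the pointwise bound survives because it holds for every approximant. A cleaner alternative that avoids approximation is to run the comparison directly on the weak solution, testing the equation for $U-u$ (and for $U+u$) against the truncation $(u-U)^+$; the difficulty there is that $U$ need not belong to $H_0^1(\Omega)$ up to the boundary, so the integration by parts must be carried out in a time-mollified (Steklov-averaged) formulation and handled with care near $\partial\Omega$, exactly in the spirit of the difference-quotient argument used in the uniqueness proof of Lemma 3.4.
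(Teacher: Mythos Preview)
Your overall architecture coincides with the paper's: compare the Dirichlet solution with the whole-space Duhamel potential via the maximum principle when the data are smooth, then approximate $h$ in $L^{1}$ and pass to the limit. The smooth-case comparison $|u_{n}|\leq U_{n}$ and the $L^{1}$ convergence $U_{n}\to U$ are handled exactly as in the paper.

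The gap is precisely where you flag ``the main obstacle''. You assert $u_{n}\to u$ by ``linearity and the energy estimate in the class $C_{s}(0,T;L^{1})\cap L^{2}(0,T;H_{0}^{1})$'', but no $L^{2}$-based energy estimate is available here: the difference $u-u_{n}$ solves the heat equation with right-hand side $h-h_{n}\in L^{1}$ only, so the usual pairing against $u-u_{n}$ or $(u-u_{n})_{t}$ is not justified. What is actually needed---and what the paper spends most of the proof on---is the $L^{1}$ contraction
\[
\|u(t)-u_{n}(t)\|_{L^{1}(\Omega)}\leq \|h-h_{n}\|_{L^{1}((0,t)\times\Omega)},
\]
valid for weak solutions in the stated class. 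The paper obtains it by mollifying the equation in time, testing against a Lipschitz approximation $\varphi_{m}$ of $\operatorname{sgn}(u)$ (so that the elliptic term contributes with the right sign), and then removing the mollification and the cutoff using $u\in C_{s}(0,T;L^{1})$ together with $\|u(t)\|_{L^{1}}\to 0$ as $t\searrow 0$. This is the substantive step; once it is in place, your approximation argument goes through verbatim. Your alternative of testing $U\pm u$ against $(u\mp U)^{+}$ runs into the difficulty you note (boundary behaviour of $U$) and is not how the paper proceeds; the sign-test on $u$ itself, combined with the smooth-data comparison, is cleaner and is what you should supply.
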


\begin{proof}
Let $\theta _{tn}(\tau )=\left\{
\begin{array}{c}
0,\text{ \ \ \ \ \ \ \ \ \ \ \ \ \ \ \ \ \ \ \ \ }\forall \tau \in \lbrack 0,%
\frac{1}{2n}), \\
2n(\tau -\frac{1}{2n}),\text{ \ \ \ \ \ \ \ }\forall \tau \in \lbrack \frac{1%
}{2n},\frac{1}{n}), \\
1,\text{ \ \ \ \ \ \ \ \ \ \ \ \ \ \ \ \ \ \ }\forall \tau \in \lbrack \frac{%
1}{n},t-\frac{1}{n}), \\
2n(t-\frac{1}{2n}-\tau ),\text{ \ \ \ \ \ \ \ \ \ }\forall \tau \in \lbrack
t-\frac{1}{n},t-\frac{1}{2n}],\text{\ \ } \\
0,\text{ \ \ \ \ \ \ \ \ \ \ \ \ \ \ \ \ \ \ }\forall \tau \in (t-\frac{1}{2n%
},t]%
\end{array}%
\right. $. Also denote\newline
$\varphi _{m}(x)=\left\{
\begin{array}{c}
-1,\text{ \ \ }x<-\frac{1}{m}, \\
mx,\text{ \ \ }\left\vert x\right\vert \leq \frac{1}{m}, \\
1,\text{ \ \ }x>\frac{1}{m}%
\end{array}%
\right. $ , $\delta _{k}(t)=\left\{
\begin{array}{c}
Mk^{2}e^{-\frac{1}{1-k^{2}\left\vert t\right\vert ^{2}}},\text{ \ }%
\left\vert t\right\vert <\frac{1}{k}, \\
0,\text{ \ \ \ \ \ \ \ \ \ \ \ \ \ \ \ }\left\vert t\right\vert \geq \frac{1%
}{k}%
\end{array}%
\right. ,$ where $m,k\in
\mathbb{N}
$ and $M^{-1}=\underset{-1}{\overset{1}{\int }}e^{-\frac{1}{1-t^{2}}}dt$.
Testing (A.8)$_{1}$ by $(\theta _{\tau n}(t)\delta _{k}(s-t)$ on $(0,\tau )$%
, we get%
\begin{equation*}
\frac{\partial }{\partial s}(\theta _{\tau n}u(.,x)\ast \delta
_{k})(s)-(u(.,x)\theta _{\tau n}^{\prime }\ast \delta _{k})(s)-\Delta
(\theta _{\tau n}u(.,x)\ast \delta _{k})(s)
\end{equation*}%
\begin{equation*}
=(h(.,x)\theta _{\tau n}\ast \delta _{k})(s),\text{ \ }(s,x)\in (0,\tau
)\times \Omega ,\text{ }\tau \in (\frac{1}{2n},T).
\end{equation*}%
Testing the last equation by $\varphi _{m}((\theta _{\tau n}u)\ast \delta
_{k})(s,x))$ on $(\frac{1}{3n},\tau )\times \Omega $, we obtain%
\begin{equation*}
\underset{\Omega }{\int }\Phi _{m}((\theta _{\tau n}u(.,x))\ast \delta
_{k})(\tau ))dx-\underset{\Omega }{\int }\Phi _{m}((\theta _{\tau
n}u(.,x))\ast \delta _{k})(\frac{1}{3n}))dx
\end{equation*}%
\begin{equation*}
-\underset{\frac{1}{3n}}{\overset{t}{\int }}\underset{\Omega }{\int }%
(u(.,x)\theta _{\tau n}^{\prime }\ast \delta _{k})(s)\varphi _{m}((\theta
_{\tau n}u(.,x))\ast \delta _{k})(s))dxds
\end{equation*}%
\begin{equation}
\leq \underset{\frac{1}{3n}}{\overset{t}{\int }}\underset{\Omega }{\int }%
\left\vert (h(.,x)\theta _{\tau n}\ast \delta _{k})(s)\varphi _{m}((\theta
_{\tau n}u(.,x))\ast \delta _{k})(s))\right\vert dxds,  \tag{A.10}
\end{equation}%
where $\Phi _{m}(y)=\underset{0}{\overset{y}{\int }}\varphi _{m}(x)dx.$ By
the definition of $\theta _{tn}$, $\delta _{k}$ and $\varphi _{m}$, we have%
\begin{equation*}
\left\{
\begin{array}{c}
\underset{k\rightarrow \infty }{\lim }\underset{\Omega }{\int }\Phi
_{m}((\theta _{\tau n}u(.,x)\ast \delta _{k})(\frac{1}{3n}))dx=0, \\
\underset{k\rightarrow \infty }{\lim }\underset{\Omega }{\int }\Phi
_{m}((\theta _{\tau n}u(.,x)\ast \delta _{k})(\tau ))dx=0, \\
\underset{k\rightarrow \infty }{\lim }\underset{\frac{1}{3n}}{\overset{\tau }%
{\int }}\underset{\Omega }{\int }(u(.,x)\theta _{\tau n}^{\prime }\ast
\delta _{k})(s)\varphi _{m}((\theta _{\tau n}u(.,x))\ast \delta _{k})(s))dxds
\\
=\underset{\frac{1}{2n}}{\overset{\tau }{\int }}\underset{\Omega }{\int }%
\theta _{\tau n}^{\prime }(s)u(s,x)\varphi _{m}(\theta _{\tau
n}(s)u(s,x))dxds, \\
\underset{k\rightarrow \infty }{\lim \sup }\underset{\frac{1}{3n}}{\overset{%
\tau }{\int }}\underset{\Omega }{\int }\left\vert (h(.,x)\theta _{\tau
n}\ast \delta _{k})(s)\varphi _{m}((\theta _{\tau n}u(.,x))\ast \delta
_{k})(s))\right\vert dxds \\
\leq \underset{k\rightarrow \infty }{\lim \sup }\underset{\frac{1}{3n}}{%
\overset{\tau }{\int }}\underset{\Omega }{\int }\left\vert (h(.,x)\theta
_{\tau n}\ast \delta _{k})(s))\right\vert dxds=\underset{\frac{1}{2n}}{%
\overset{\tau }{\int }}\underset{\Omega }{\int }\left\vert h(s,x)\theta
_{\tau n}(s)\right\vert dxds.%
\end{array}%
\right.
\end{equation*}%
Thus, passing to the limit in (A.10) as $k\rightarrow \infty $, we obtain%
\begin{equation*}
-\underset{\frac{1}{2n}}{\overset{\tau }{\int }}\underset{\Omega }{\int }%
u(s,x)\theta _{\tau n}^{\prime }(s)\varphi _{m}(\theta _{\tau
n}(s)u(s,x))dxds\leq \underset{\frac{1}{2n}}{\overset{\tau }{\int }}\underset%
{\Omega }{\int }\left\vert h(s,x)\theta _{\tau n}(s)\right\vert dxds
\end{equation*}%
Now, take the limit in the last inequality as $m\rightarrow \infty $, we find%
\begin{equation}
2n\underset{\tau -\frac{1}{n}}{\overset{\tau -\frac{1}{2n}}{\int }}\underset{%
\Omega }{\int }\left\Vert u(s)\right\Vert _{L^{1}(\Omega )}ds-2n\underset{%
\frac{1}{2n}}{\overset{\frac{1}{n}}{\int }}\left\Vert u(s)\right\Vert
_{L^{1}(\Omega )}ds\leq \underset{0}{\overset{\tau }{\int }}\left\Vert
h(s)\right\Vert _{L^{1}(\Omega )}ds\text{.}  \tag{A.11}
\end{equation}%
Since $u\in C_{s}(0,T;L_{1}(\Omega )),$ by the weak lower semi-continuity of
the norm, it follows that
\begin{equation*}
\underset{n\rightarrow \infty }{\lim \inf }2n\underset{t-\frac{1}{n}}{%
\overset{t-\frac{1}{2n}}{\int }}\underset{\Omega }{\int }\left\Vert u(\tau
)\right\Vert _{L^{1}(\Omega )}d\tau \geq \left\Vert u(t)\right\Vert
_{L^{1}(\Omega )}\text{.}
\end{equation*}%
Also, by the condition $\underset{t\searrow 0}{\lim }\left\Vert
u(t)\right\Vert _{L^{1}(\Omega )}=0$, we have%
\begin{equation*}
\underset{n\rightarrow \infty }{\lim }2n\underset{\frac{1}{2n}}{\overset{%
\frac{1}{n}}{\int }}\left\Vert u(\tau )\right\Vert _{L^{1}(\Omega )}d\tau =0%
\text{.}
\end{equation*}%
Hence, passing to the limit in (A.11) as $n\rightarrow \infty $, we get%
\begin{equation}
\left\Vert u(t)\right\Vert _{L^{1}(\Omega )}\leq \left\Vert h\right\Vert
_{L^{1}((0,t)\times \Omega )},\text{ \ }\forall t\in \lbrack 0,T]\text{.}
\tag{A.12}
\end{equation}%
Define $H(t,x)=\left\{
\begin{array}{c}
h(t,x),\text{ \ }(t,x)\in (0,T)\times \Omega \text{ \ } \\
0,\text{ \ }(t,x)\in R^{N+1}\backslash (0,T)\times \Omega%
\end{array}%
\right. $, $H_{m}(t,x)=(H\ast \rho _{m})(t,x)$ and $\overline{H}%
_{m}(t,x)=(\left\vert H\right\vert \ast \rho _{m})(t,x)$. Since $H_{m}\in
C_{0}(R^{N+1})$ and $\overline{H}_{m}\in C_{0}(R^{N+1})$, the problems%
\begin{equation*}
\left\{
\begin{array}{c}
u_{mt}(t,x)-\Delta u_{m}(s,x)=H_{m}(t,x),\text{ \ \ }(t,x)\in (0,T)\times
\Omega ,\text{ \ \ } \\
u_{m}(t,x)=0,\text{ \ \ \ \ \ \ \ \ \ \ \ }(t,x)\in ((0,T)\times \partial
\Omega )\cup (\left\{ 0\right\} \times \Omega )\text{\ \ \ }%
\end{array}%
\right.
\end{equation*}%
and
\begin{equation*}
\left\{
\begin{array}{c}
v_{mt}(t,x)-\Delta v_{m}(s,x)=\overline{H}_{m}(t,x),\text{ \ \ }(t,x)\in
(0,T)\times R^{N},\text{ \ } \\
v_{m}(t,x)=0,\text{ \ \ \ \ \ \ \ \ \ \ \ \ \ \ \ \ \ \ \ \ \ \ \ \ \ \ \ \ }%
(t,x)\in \left\{ 0\right\} \times R^{N}\text{\ \ \ \ \ \ \ }%
\end{array}%
\right.
\end{equation*}%
have unique smooth classical solutions. By (A.12), it follows that%
\begin{equation}
\left\Vert u(t)-u_{m}(t)\right\Vert _{L^{1}(\Omega )}\leq \left\Vert
H-H_{m}\right\Vert _{L^{1}((0,t)\times \Omega )},\text{ \ }\forall t\in
\lbrack 0,T]\text{.}  \tag{A.13}
\end{equation}%
On the other hand, applying Duhamel's principle (see \cite[p. 49]{26}), we
get
\begin{equation}
v_{m}(t,x)=\frac{1}{\left( 4\pi \right) ^{\frac{N}{2}}}\underset{0}{\overset{%
t}{\int }}\frac{1}{\left( t-s\right) ^{\frac{N}{2}}}\underset{R^{N}}{\int }%
e^{-\frac{\left\vert x-y\right\vert ^{2}}{4(t-s)}}\overline{H}_{m}(s,y)dyds.
\tag{A.14}
\end{equation}%
Denoting $w_{m}(t,x)=v_{m}(t,x)-u_{m}(t,x)$, we have%
\begin{equation*}
\left\{
\begin{array}{c}
w_{mt}(t,x)-\Delta w_{m}(s,x)\geq 0,\text{ \ \ }(t,x)\in (0,T)\times \Omega ,%
\text{ \ } \\
w_{m}(t,x)\geq 0,\text{ \ \ \ \ \ \ \ \ \ \ \ \ \ \ \ \ \ }(t,x)\in
(0,T)\times \partial \Omega ,\text{\ \ \ } \\
w_{m}(t,x)=0,\text{ \ \ \ \ \ \ \ \ \ \ \ \ \ \ \ \ \ \ \ \ }(t,x)\in
\left\{ 0\right\} \times \Omega \text{.\ \ \ \ \ }%
\end{array}%
\right.
\end{equation*}%
So, by the maximum principle, it follows that
\begin{equation*}
w_{m}(t,x)\geq 0\text{ or }v_{m}(t,x)\geq u_{m}(t,x)\text{, \ }\forall
(t,x)\in \lbrack 0,T]\times \overline{\Omega }\text{.}
\end{equation*}%
By the similar way, one can show that%
\begin{equation*}
u_{m}(t,x)\geq -v_{m}(t,x)\text{, \ }\forall (t,x)\in \lbrack 0,T]\times
\overline{\Omega }\text{.}
\end{equation*}%
Hence,
\begin{equation*}
\left\vert u_{m}(t,x)\right\vert \leq v_{m}(t,x)\text{, \ }\forall (t,x)\in
\lbrack 0,T]\times \overline{\Omega }\text{,}
\end{equation*}%
which, together with (A.14), yields%
\begin{equation}
\left\vert u_{m}(t,x)\right\vert \leq \frac{1}{\left( 4\pi \right) ^{\frac{N%
}{2}}}\underset{0}{\overset{t}{\int }}\frac{1}{\left( t-s\right) ^{\frac{N}{2%
}}}\underset{R^{N}}{\int }e^{-\frac{\left\vert x-y\right\vert ^{2}}{4(t-s)}}%
\overline{H}_{m}(s,y)dyds\text{, \ }\forall (t,x)\in \lbrack 0,T]\times
\overline{\Omega }\text{.}  \tag{A.15}
\end{equation}%
Since%
\begin{equation*}
\left\Vert \underset{0}{\overset{t}{\int }}\frac{1}{\left( t-s\right) ^{%
\frac{N}{2}}}\underset{R^{N}}{\int }e^{-\frac{\left\vert x-y\right\vert ^{2}%
}{4(t-s)}}\overline{H}_{m}(s,y)dyds-\underset{0}{\overset{t}{\int }}\frac{1}{%
\left( t-s\right) ^{\frac{N}{2}}}\underset{\Omega }{\int }e^{-\frac{%
\left\vert x-y\right\vert ^{2}}{4(t-s)}}\left\vert h(s,y)\right\vert
dyds\right\Vert _{L^{1}(\Omega )}
\end{equation*}%
\begin{equation*}
\leq \underset{R^{N}}{\int }\underset{0}{\overset{t}{\int }}\frac{1}{\left(
t-s\right) ^{\frac{N}{2}}}\underset{R^{N}}{\int }e^{-\frac{\left\vert
x-y\right\vert ^{2}}{4(t-s)}}\left\vert \overline{H}_{m}(s,y)-\left\vert
H(s,y)\right\vert \right\vert dydsdx
\end{equation*}%
\begin{equation*}
=\underset{0}{\overset{t}{\int }}\underset{R^{N}}{\int }\left\vert \overline{%
H}_{m}(s,y)-\left\vert H(s,y)\right\vert \right\vert \frac{1}{\left(
t-s\right) ^{\frac{N}{2}}}\underset{R^{N}}{\int }e^{-\frac{\left\vert
x-y\right\vert ^{2}}{4(t-s)}}dxdyds
\end{equation*}%
\begin{equation*}
=\left( 4\pi \right) ^{\frac{N}{2}}\underset{0}{\overset{t}{\int }}\underset{%
R^{N}}{\int }\left\vert \overline{H}_{m}(s,y)-\left\vert H(s,y)\right\vert
\right\vert dyds
\end{equation*}%
\begin{equation}
=\left( 4\pi \right) ^{\frac{N}{2}}\left\Vert \left\vert H\right\vert
-\left\vert H\right\vert \ast \rho _{m}\right\Vert _{L^{1}((0,t)\times
\Omega )},\text{ \ \ \ }\forall t\in \lbrack 0,T],  \tag{A.16}
\end{equation}%
passing to the limit in (A.15) and taking into account (A.13) and (A.16), we
obtain (A.9).
\end{proof}

\bigskip

\begin{lemma}
Let $Q\subset R^{N}$ be a measurable set of finite measure and $\varphi $ an
increasing continuous function such that $\varphi (0)=0$. Further assume
that $v_{i}\in L^{\infty }(0,T;L^{\infty }(Q))$ and $v_{it}\in
L^{1}(0,T;L^{1}(Q))$, $i=1,2$. If
\begin{equation*}
\underset{0}{\overset{T}{\int }}\underset{Q}{\int }\left( \left\vert \varphi
(v_{it}(t,x))\right\vert +\left\vert \varphi (-v_{it}(t,x))\right\vert
\right) \left\vert v_{it}(t,x)\right\vert dxdt<\infty ,\text{ }i=1,2,
\end{equation*}%
then%
\begin{equation}
\underset{h\searrow 0}{\lim \inf }\underset{s}{\overset{t}{\int }}\underset{s%
}{\overset{\sigma }{\int }}\underset{Q}{\int }\left( \varphi (v_{2t}(\tau
,x))-\varphi (v_{1t}(\tau ,x))\right) \frac{w(\tau +h,x)-w(\tau -h,x)}{2h}%
dxd\tau d\sigma \geq 0,\text{ }  \tag{A.17}
\end{equation}%
for every $[s,t]\subset (0,T)$, where $w(t,x)=v_{2}(t,x)-v_{1}(t,x)$.
\end{lemma}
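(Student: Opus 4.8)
The plan is to convert the assertion into a statement about the limit of $P_{h}:=\int_{s}^{t}(t-\tau)\int_{Q}\Psi(\tau)\,D_{h}w(\tau)\,dx\,d\tau$ and to exploit monotonicity of $\varphi$. Since $\varphi$ is increasing, $(\varphi(a)-\varphi(b))(a-b)\ge0$ for all $a,b\in R$, so taking $a=v_{2t}$, $b=v_{1t}$ gives the pointwise bound $\Psi\,w_{t}\ge0$ a.e., where $\Psi:=\varphi(v_{2t})-\varphi(v_{1t})$ and $w_{t}=v_{2t}-v_{1t}$; continuity of $\varphi$ together with the hypothesis shows $\varphi(v_{it})\in L^{1}((0,T)\times Q)$. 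For $0<h<\min\{s,T-t\}$ the translates $w(\tau\pm h)$ are defined, and since each $v_{i}(\cdot,x)$ is absolutely continuous in time, $D_{h}w(\tau):=\frac{w(\tau+h)-w(\tau-h)}{2h}=\frac{1}{2h}\int_{-h}^{h}w_{t}(\tau+\rho)\,d\rho$, which is a time convolution with an approximate identity, whence $D_{h}w\to w_{t}$ in $L^{1}((s,t)\times Q)$. Interchanging the $\sigma$- and $\tau$-integrations by Fubini then reduces the claim to
\[
\liminf_{h\searrow0}P_{h}\ge0,\qquad P_{h}=\int_{s}^{t}(t-\tau)\int_{Q}\Psi(\tau)\,D_{h}w(\tau)\,dx\,d\tau .
\]

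Next I would run a truncation argument. For $M\ge1$ set $\varphi_{M}(r)=\varphi(\max\{-M,\min\{M,r\}\})$, a bounded increasing function with $\varphi_{M}(0)=0$, and put $\Psi_{M}:=\varphi_{M}(v_{2t})-\varphi_{M}(v_{1t})$. For fixed $M$ the weight $(t-\tau)\Psi_{M}$ lies in $L^{\infty}$, so convergence $D_{h}w\to w_{t}$ in $L^{1}$ gives
\[
\lim_{h\searrow0}\int_{s}^{t}(t-\tau)\int_{Q}\Psi_{M}\,D_{h}w\,dx\,d\tau=L_{M}:=\int_{s}^{t}(t-\tau)\int_{Q}\Psi_{M}\,w_{t}\,dx\,d\tau\ge0 ,
\]
the sign coming from monotonicity of $\varphi_{M}$. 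Writing $E_{M}^{h}:=\int_{s}^{t}(t-\tau)\int_{Q}(\Psi-\Psi_{M})D_{h}w\,dx\,d\tau$ and using that the first piece converges, I obtain $\liminf_{h\searrow0}P_{h}=L_{M}+\liminf_{h\searrow0}E_{M}^{h}\ge-\limsup_{h\searrow0}\vert E_{M}^{h}\vert$. Since $\liminf_{h\searrow0}P_{h}$ does not depend on $M$, it suffices to prove $\limsup_{h\searrow0}\vert E_{M}^{h}\vert\to0$ as $M\to\infty$.

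This last estimate is the step I expect to be the main obstacle, because $D_{h}w$ is controlled only in $L^{1}$, not in $L^{\infty}$, so one cannot pass to the limit directly in the product of the difference quotient with the merely integrable factor $\Psi-\Psi_{M}$. The remedy is that $\Psi-\Psi_{M}$ is supported where the velocities are large: from $|\varphi(r)-\varphi_{M}(r)|\le|\varphi(r)|\mathbf{1}_{\{|r|>M\}}$ one gets $|\Psi-\Psi_{M}|\le\sum_{i}|\varphi(v_{it})|\mathbf{1}_{\{|v_{it}|>M\}}$, and with $|w_{t}(\tau+\rho)|\le\sum_{j}|v_{jt}(\tau+\rho)|$ the quantity $\vert E_{M}^{h}\vert$ is dominated by a finite sum of terms $T\sup_{|\rho|\le h}\int_{s}^{t}\int_{Q}|\varphi(v_{it}(\tau))|\mathbf{1}_{\{|v_{it}(\tau)|>M\}}\,|v_{jt}(\tau+\rho)|\,dx\,d\tau$ with $i,j\in\{1,2\}$. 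On the region $\{|v_{jt}(\tau+\rho)|\le|v_{it}(\tau)|\}$ the integrand is at most $|\varphi(v_{it})|\,|v_{it}|\,\mathbf{1}_{\{|v_{it}|>M\}}$, a tail of an integrable function, hence small for large $M$ uniformly in $\rho$. On the complementary region monotonicity of $\varphi$ forces $|\varphi(v_{it}(\tau))|\le|\varphi(v_{jt}(\tau+\rho))|+|\varphi(-v_{jt}(\tau+\rho))|$, so the integrand is bounded by $\Theta(\tau+\rho,x)$ with $\Theta:=(|\varphi(v_{jt})|+|\varphi(-v_{jt})|)|v_{jt}|$; this is exactly the integrand of the hypothesis and therefore belongs to $L^{1}((0,T)\times Q)$. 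After the substitution $\tau\mapsto\tau+\rho$ the corresponding contribution is an integral of $\Theta$ over a set whose measure is at most $M^{-1}\Vert v_{it}\Vert_{L^{1}((0,T)\times Q)}\to0$, so by uniform integrability of $\Theta$ it tends to $0$ as $M\to\infty$, uniformly in $\rho$. This is precisely where the second integrability condition, the one involving $\varphi(-v_{it})$, is indispensable. Combining the two regions yields $\limsup_{h\searrow0}\vert E_{M}^{h}\vert\to0$ as $M\to\infty$, which finishes the proof.
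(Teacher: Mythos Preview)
Your argument is correct and follows the same overall architecture as the paper---truncate $\varphi$ to a bounded $\varphi_{M}$, use monotonicity to get a nonnegative limit for the truncated piece, then show the remainder vanishes as $M\to\infty$---but the error estimate is handled by a genuinely different mechanism. The paper introduces the complementary Young pair $M_{\varphi}(u)=\int_{0}^{u}\varphi$ and $N_{\varphi}(v)=\int_{0}^{v}\varphi^{-1}$, bounds each cross term $(\varphi(v_{it})-\varphi_{M}(v_{it}))\cdot\frac{v_{j}(\tau\pm h)-v_{j}(\tau\mp h)}{2h}$ from below via Young's inequality $uv\ge -M_{\varphi}(u)-N_{\varphi}(-v)$, and then invokes Jensen's inequality on the convex function $M_{\varphi}$ to push the time average inside; this produces tail integrals of $\varphi(v_{it})v_{it}$ and $|\varphi(-v_{it})|\,|v_{it}|$ that tend to zero. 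Your route avoids the Orlicz machinery entirely: after the Fubini reduction to the weight $(t-\tau)$, you split the space-time domain according to whether $|v_{jt}(\tau+\rho)|\le|v_{it}(\tau)|$ or not, use the monotonicity of $\varphi$ directly on each piece, and close with Chebyshev plus absolute continuity of the fixed $L^{1}$ function $\Theta$. Your approach is more elementary and makes transparent why the hypothesis on $\varphi(-v_{it})$ is needed (it supplies the bound on the ``wrong-sign'' region); the paper's approach, on the other hand, is more systematic and generalizes naturally to settings where one already works in Orlicz spaces. One small correction: the identity $\liminf_{h}P_{h}=L_{M}+\liminf_{h}E_{M}^{h}$ should be an inequality $\ge$, since $\liminf$ is only superadditive, but your conclusion $\liminf_{h}P_{h}\ge -\limsup_{h}|E_{M}^{h}|$ is unaffected.
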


\begin{proof}
By the conditions of the lemma, it follows that
\begin{equation*}
\underset{0}{\overset{T}{\int }}\underset{Q}{\int }\left\vert \varphi
(v_{it}(t,x))\right\vert dxdt=\underset{0}{\overset{T}{\int }}\underset{%
\left\{ x:x\in Q,\left\vert v_{it}(t,x)\right\vert >1\right\} }{\int }%
\left\vert \varphi (v_{it}(t,x))\right\vert dxdt
\end{equation*}%
\begin{equation*}
+\underset{0}{\overset{T}{\int }}\underset{\left\{ x:x\in Q,\left\vert
v_{it}(t,x)\right\vert \leq 1\right\} }{\int }\left\vert \varphi
(v_{it}(t,x))\right\vert dxdt
\end{equation*}%
\begin{equation*}
\leq \underset{0}{\overset{T}{\int }}\underset{\left\{ x:x\in Q,\left\vert
v_{it}(t,x)\right\vert >1\right\} }{\int }\left\vert \varphi
(v_{it}(t,x))\right\vert \left\vert v_{it}(t,x)\right\vert dxdt
\end{equation*}%
\begin{equation*}
+Tmes(Q)\left\Vert \varphi \right\Vert _{C[-1,1]}<\infty ,\text{ \ }i=1,2.
\end{equation*}%
Therefore, the integral in (A.17) is well defined.

Now, let us denote $M_{\varphi }(t)=\underset{0}{\overset{t}{\int }}\varphi
(s)ds$, $N_{\varphi }(t)=\underset{0}{\overset{t}{\int }}\varphi ^{-1}(s)ds$
and $g(t)=-\varphi (-t)$. By Young's inequality (see \cite[p. 12]{27}), we
have%
\begin{equation}
uv\leq M_{\varphi }(u)+N_{\varphi }(v),\text{ \ \ }\forall u,v\geq 0\text{.}
\tag{A.18}
\end{equation}%
If $u<0$ and $v<0$, then again by Young's inequality,
\begin{equation*}
uv=-u(-v)\leq M_{g}(-u)+N_{g}(-v)=M_{\varphi }(u)+N_{\varphi }(u)\text{.}
\end{equation*}%
Since the right hand side of (A.18) is nonnegative for all $u,v\in R$, in
the case $uv<0$, this inequality is trivial. Hence, the inequality (A.18)
holds for all $u,v\in R$. Therefore we conclude that%
\begin{equation}
uv=-u(-v)\geq -M_{\varphi }(u)-N_{\varphi }(-v),\text{ \ }\forall u,v\in R.
\tag{A.19}
\end{equation}

Denote $\varphi _{M}(x)=\left\{
\begin{array}{c}
-M,\text{ \ \ }x<-M, \\
\varphi (x),\text{ \ \ }\left\vert x\right\vert \leq M, \\
M,\text{ \ \ }x>M%
\end{array}%
\right. $ . Since, by definition,%
\begin{equation*}
\underset{h\searrow 0}{\lim }\left\Vert \frac{w(\cdot +h,)-w(\cdot -h,)}{2h}%
-w_{t}(\cdot ,)\right\Vert _{L^{1}((0,t)\times Q)}=0,\text{ }\forall t\in
(0,T),
\end{equation*}%
we have%
\begin{equation*}
\frac{w(t+h,x)-w(t-h,x)}{2h}\rightarrow w_{t}(t,x)\text{ in the measure as }%
h\searrow 0\text{.}
\end{equation*}%
Hence, applying Lebesgue's convergence theorem and taking into account the
monotonicity of $\varphi _{M}$, we get%
\begin{equation*}
\underset{h\searrow 0}{\lim }\underset{s}{\overset{t}{\int }}\underset{s}{%
\overset{\sigma }{\int }}\underset{Q}{\int }\left( \varphi _{M}(v_{2t}(\tau
,x))-\varphi _{M}(v_{1t}(\tau ,x))\right) \frac{w(\tau +h,x)-w(\tau -h,x)}{2h%
}dxd\tau d\sigma
\end{equation*}%
\begin{equation*}
=\underset{s}{\overset{t}{\int }}\underset{s}{\overset{\sigma }{\int }}%
\underset{Q}{\int }\left( \varphi _{M}(v_{2t}(\tau ,x))-\varphi
_{M}(v_{1t}(\tau ,x))\right) w_{t}(\tau ,x)dxd\tau d\sigma \geq 0
\end{equation*}%
and consequently%
\begin{equation*}
\underset{h\searrow 0}{\lim \inf }\underset{s}{\overset{t}{\int }}\underset{s%
}{\overset{\sigma }{\int }}\underset{Q}{\int }\left( \varphi (v_{2t}(\tau
,x))-\varphi (v_{1t}(\tau ,x))\right) \frac{w(\tau +h,x)-w(\tau -h,x)}{2h}%
dxd\tau d\sigma
\end{equation*}%
\begin{equation*}
\underset{h\searrow 0}{\geq \lim \inf }\underset{s}{\overset{t}{\int }}%
\underset{s}{\overset{\sigma }{\int }}\underset{Q}{\int }\left( \varphi
(v_{2t}(\tau ,x))-\varphi _{M}(v_{2t}(\tau ,x))\right) \frac{v_{2}(\tau
+h,x)-v_{2}(\tau -h,x)}{2h}dxd\tau
\end{equation*}%
\begin{equation*}
+\underset{h\searrow 0}{\lim \inf }\underset{s}{\overset{t}{\int }}\underset{%
s}{\overset{\sigma }{\int }}\underset{Q}{\int }\left( \varphi (v_{2t}(\tau
,x))-\varphi _{M}(v_{2t}(\tau ,x))\right) \frac{v_{1}(\tau -h,x)-v_{1}(\tau
+h,x)}{2h}dxd\tau d\sigma
\end{equation*}%
\begin{equation*}
+\underset{h\searrow 0}{\lim \inf }\underset{s}{\overset{t}{\int }}\underset{%
s}{\overset{\sigma }{\int }}\underset{Q}{\int }\left( \varphi (v_{1t}(\tau
,x))-\varphi _{M}(v_{1t}(\tau ,x))\right) \frac{v_{1}(\tau +h,x)-v_{1}(\tau
-h,x)}{2h}dxd\tau d\sigma
\end{equation*}%
\begin{equation*}
+\underset{h\searrow 0}{\lim \inf }\underset{s}{\overset{t}{\int }}\underset{%
s}{\overset{\sigma }{\int }}\underset{Q}{\int }\left( \varphi (v_{1t}(\tau
,x))-\varphi _{M}(v_{1t}(\tau ,x))\right) \frac{v_{2}(\tau -h,x)-v_{2}(\tau
+h,x)}{2h}dxd\tau d\sigma
\end{equation*}%
\begin{equation}
=:I_{1}^{M}(s,t)+I_{2}^{M}(s,t)+I_{3}^{M}(s,t)+I_{4}^{M}(s,t),\text{ \ \ }%
\forall \lbrack s,t]\subset (0,T)\text{ and }\forall M>0.  \tag{A.20}
\end{equation}%
Now, let us estimate each $I_{i}^{M}(s,t)$ ($i=1,2,3,4$). By (A.19) and
Jensen's inequality for convex functions (see \cite[p. 62]{27}), we have%
\begin{equation*}
I_{1}^{M}(t)=\underset{h\searrow 0}{\lim \inf }\underset{s}{\overset{t}{\int
}}\underset{s}{\overset{\sigma }{\int }}\underset{Q}{\int }\left( \varphi
(v_{2t}(\tau ,x))-\varphi _{m}(v_{2t}(\tau ,x))\right) \frac{v_{2}(\tau
+h,x)-v_{2}(\tau -h,x)}{2h}dxd\tau d\sigma
\end{equation*}%
\begin{equation*}
\geq -\underset{s}{\overset{t}{\int }}\underset{s}{\overset{\sigma }{\int }}%
\underset{\left\{ x:x\in Q,\left\vert v_{2t}(\tau ,x)\right\vert >M\right\} }%
{\int }N_{\varphi }\left( \varphi (v_{2t}(\tau ,x))-\varphi _{m}(v_{2t}(\tau
,x))\right) dxd\tau d\sigma
\end{equation*}%
\begin{equation*}
-\underset{h\searrow 0}{\lim \sup }\underset{s}{\overset{t}{\int }}\underset{%
s}{\overset{\sigma }{\int }}\underset{\left\{ x:x\in Q,\left\vert
v_{2t}(\tau ,x)\right\vert >M\right\} }{\int }M_{\varphi }\left( \frac{1}{2}%
\underset{-1}{\overset{1}{\int }}-v_{2t}(\tau +\mu h,x)d\mu \right) dxd\tau
d\sigma
\end{equation*}%
\begin{equation*}
\geq -\underset{s}{\overset{t}{\int }}\underset{s}{\overset{\sigma }{\int }}%
\underset{\left\{ x:x\in Q,\left\vert v_{2t}(\tau ,x)\right\vert >M\right\} }%
{\int }N_{\varphi }\left( \varphi (v_{2t}(\tau ,x))\right) dxd\tau d\sigma
\end{equation*}%
\begin{equation}
-\frac{1}{2}\underset{h\searrow 0}{\lim \sup }\underset{s}{\overset{t}{\int }%
}\underset{s}{\overset{\sigma }{\int }}\underset{\left\{ x:x\in Q,\left\vert
v_{2t}(\tau ,x)\right\vert >M\right\} }{\int }\underset{-1}{\overset{1}{\int
}}M_{\varphi }(-v_{2t}(\tau +\mu h,x))d\mu dxd\tau d\sigma .  \tag{A.21}
\end{equation}%
By the definition of $N_{\varphi }$ and $M_{\varphi }$, we obtain%
\begin{equation*}
\underset{s}{\overset{t}{\int }}\underset{s}{\overset{\sigma }{\int }}%
\underset{\left\{ x:x\in Q,\left\vert v_{2t}(\tau ,x)\right\vert >M\right\} }%
{\int }N_{\varphi }\left( \varphi (v_{2t}(\tau ,x))\right) dxd\tau d\sigma
\end{equation*}%
\begin{equation}
\leq T\underset{0}{\overset{T}{\int }}\underset{\left\{ x:x\in Q,\left\vert
v_{2t}(\tau ,x)\right\vert >M\right\} }{\int }\varphi (v_{2t}(\tau
,x))v_{2t}(\tau ,x)dxd\tau ,  \tag{A.22}
\end{equation}%
and
\begin{equation*}
\underset{h\searrow 0}{\lim \sup }\underset{s}{\overset{t}{\int }}\underset{s%
}{\overset{\sigma }{\int }}\underset{\left\{ x:x\in Q,\left\vert v_{2t}(\tau
,x)\right\vert >M\right\} }{\int }\underset{-1}{\overset{1}{\int }}%
M_{\varphi }(-v_{2t}(\tau +\mu h,x))d\mu dxd\tau d\sigma
\end{equation*}%
\begin{equation*}
\leq 2\underset{s}{\overset{t}{\int }}\underset{s}{\overset{\sigma }{\int }}%
\underset{\left\{ x:x\in Q,\left\vert v_{2t}(\tau ,x)\right\vert >M\right\} }%
{\int }M_{\varphi }(-v_{2t}(\tau ,x))dxd\tau d\sigma +
\end{equation*}%
\begin{equation*}
+\underset{h\searrow 0}{\lim \sup }\underset{-1}{\overset{1}{\int }}\underset%
{s}{\overset{t}{\int }}\underset{s}{\overset{\sigma }{\int }}\underset{Q}{%
\int }\left\vert M_{\varphi }(-v_{2t}(\tau +\mu h,x))-M_{\varphi
}(-v_{2t}(\tau ,x))\right\vert dxd\tau d\sigma d\mu
\end{equation*}%
\begin{equation*}
\leq 2\underset{s}{\overset{t}{\int }}\underset{s}{\overset{\sigma }{\int }}%
\underset{\left\{ x:x\in Q,\left\vert v_{2t}(\tau ,x)\right\vert >M\right\} }%
{\int }\left\vert \varphi (-v_{2t}(\tau ,x))\right\vert \left\vert
v_{2t}(\tau ,x)\right\vert dxd\tau d\sigma
\end{equation*}%
\begin{equation}
+\underset{h\searrow 0}{\lim \sup }\underset{-1}{\overset{1}{\int }}\underset%
{s}{\overset{t}{\int }}\underset{s}{\overset{\sigma }{\int }}\underset{Q}{%
\int }\left\vert M_{\varphi }(-v_{2t}(\tau +\mu h,x))-M_{\varphi
}(-v_{2t}(\tau ,x))\right\vert dxd\tau d\sigma d\mu .  \tag{A.23}
\end{equation}%
Now, to pass to the limit under last the integral, we apply Lebesgue's
convergence theorem. Since $v_{2t}\in L^{1}(0,T;L^{1}(Q))$, we have (see
\cite[Remark 3.2]{22})
\begin{equation*}
\underset{h\searrow 0}{\lim }\underset{s}{\overset{\sigma }{\int }}\underset{%
Q}{\int }\left\vert v_{2t}(\tau +\mu h,x)-v_{2t}(\tau ,x)\right\vert dxd\tau
=0,\text{ \ }\forall \lbrack s,\sigma ]\subset (0,T),
\end{equation*}%
which yields
\begin{equation*}
v_{2t}(\cdot +h\mu ,\cdot )\rightarrow v_{2t}(\cdot ,\cdot )\text{ in
measure as }h\rightarrow 0.
\end{equation*}%
Also, for $\mu \in \lbrack -1,1]$ and sufficiently small $h>0$, it is easy
to see that%
\begin{equation*}
\underset{s}{\overset{\sigma }{\int }}\underset{Q}{\int }\left\vert
M_{\varphi }(-v_{2t}(\tau +\mu h,x))-M_{\varphi }(-v_{2t}(\tau
,x))\right\vert dxd\tau
\end{equation*}%
\begin{equation*}
\leq \underset{s+\mu h}{\overset{\sigma +\mu h}{\int }}\underset{Q}{\int }%
\left\vert \varphi (-v_{2t}(\tau ,x))\right\vert \left\vert v_{2t}(\tau
,x)\right\vert dxd\tau +\underset{s}{\overset{\sigma }{\int }}\underset{Q}{%
\int }\left\vert \varphi (-v_{2t}(\tau ,x))\right\vert \left\vert
v_{2t}(\tau ,x)\right\vert dxd\tau
\end{equation*}%
\begin{equation*}
\leq 2\underset{0}{\overset{T}{\int }}\underset{Q}{\int }\left\vert \varphi
(-v_{2t}(\tau ,x))\right\vert \left\vert v_{2t}(\tau ,x)\right\vert dxd\tau .
\end{equation*}%
Hence, by the Lebesgue's convergence theorem,%
\begin{equation}
\underset{h\searrow 0}{\lim \sup }\underset{-1}{\overset{1}{\int }}\underset{%
s}{\overset{t}{\int }}\underset{s}{\overset{\sigma }{\int }}\underset{Q}{%
\int }\left\vert M_{\varphi }(-v_{2t}(\tau +\mu h,x))-M_{\varphi
}(-v_{2t}(\tau ,x))\right\vert dxd\tau d\sigma d\mu =0.  \tag{A.24}
\end{equation}%
Taking into account (A.22)-(A.24) in (A.21), we get
\begin{equation*}
I_{1}^{M}(s,t)\geq -T\underset{0}{\overset{T}{\int }}\underset{\left\{
x:x\in Q,\left\vert v_{2t}(\tau ,x)\right\vert >M\right\} }{\int }\varphi
(v_{2t}(\tau ,x))v_{2t}(\tau ,x)dxd\tau
\end{equation*}%
\begin{equation*}
-T\underset{0}{\overset{T}{\int }}\underset{\left\{ x:x\in Q,\left\vert
v_{2t}(\tau ,x)\right\vert >M\right\} }{\int }\left\vert \varphi
(-v_{2t}(\tau ,x))\right\vert \left\vert v_{2t}(\tau ,x)\right\vert dxd\tau ,%
\text{ \ }\forall \lbrack s,t]\subset (0,T),
\end{equation*}%
and consequently%
\begin{equation*}
\underset{M\rightarrow \infty }{\lim \inf }I_{1}^{M}(s,t)\geq 0,\text{ \ \ \
}\forall \lbrack s,t]\subset (0,T).
\end{equation*}%
By the same way, one can show that%
\begin{equation*}
\underset{M\rightarrow \infty }{\lim \inf }I_{i}^{M}(s,t)\geq 0,\text{ \ \ }%
\forall \lbrack s,t]\subset (0,T),\text{ }i=2,3,4,
\end{equation*}%
which, together with (A.20), gives us (A.17).
\end{proof}

\begin{lemma}
Let $Q\subset R^{N}$ be a measurable set of finite measure and $\varphi $ an
increasing continuous function such that $\varphi (0)=0$. Also assume that $%
w\in L^{\infty }(0,T;L^{\infty }(Q))$ and $w_{t}\in L^{1}(0,T;L^{1}(Q))$. If
\begin{equation*}
\underset{0}{\overset{T}{\int }}\underset{Q}{\int }\left( \left\vert \varphi
(w_{t}(t,x))\right\vert +\left\vert \varphi (-w_{t}(t,x))\right\vert \right)
\left\vert w_{t}(t,x)\right\vert dxdt<\infty \text{,}
\end{equation*}%
then%
\begin{equation*}
\underset{h\rightarrow 0}{\lim \inf }\text{ }\underset{s\leq \sigma \leq t}{%
\max }\underset{\sigma }{\overset{\sigma +\left\vert h\right\vert }{\int }}%
\underset{Q}{\int }\left\vert \varphi (w_{t}(\tau ,x))\frac{w(\tau
+h,x)-w(\tau ,x)}{h}\right\vert dxd\tau =0\text{,}
\end{equation*}%
for every $[s,t]\subset (0,T)$.
\end{lemma}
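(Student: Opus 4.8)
The plan is to reduce everything to the single fact that the integral of a fixed $L^{1}$ function over a time-window of length $\left\vert h\right\vert $ tends to zero uniformly in the position of the window. First I would set $G(\tau ,x)=\left( \left\vert \varphi (w_{t}(\tau ,x))\right\vert +\left\vert \varphi (-w_{t}(\tau ,x))\right\vert \right) \left\vert w_{t}(\tau ,x)\right\vert $, which by hypothesis belongs to $L^{1}((0,T)\times Q)$, and treat the case $h>0$ (the case $h<0$ being symmetric after writing the window as $[\sigma ,\sigma +\left\vert h\right\vert ]$ and reflecting time). Using $w_{t}\in L^{1}(0,T;L^{1}(Q))$ I would write the difference quotient as an average, $\frac{w(\tau +h,x)-w(\tau ,x)}{h}=\int_{0}^{1}w_{t}(\tau +h\lambda ,x)\,d\lambda $, so that the integrand is dominated by $\left\vert \varphi (w_{t}(\tau ,x))\right\vert \int_{0}^{1}\left\vert w_{t}(\tau +h\lambda ,x)\right\vert \,d\lambda $.

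The key pointwise estimate I would establish is $\left\vert \varphi (w_{t}(\tau ,x))\right\vert \left\vert w_{t}(\tau +h\lambda ,x)\right\vert \leq G(\tau ,x)+G(\tau +h\lambda ,x)$, obtained purely from the monotonicity of $\varphi $. Indeed, if $\left\vert w_{t}(\tau +h\lambda ,x)\right\vert \leq \left\vert w_{t}(\tau ,x)\right\vert $ the left-hand side is at most $\left\vert \varphi (w_{t}(\tau ,x))\right\vert \left\vert w_{t}(\tau ,x)\right\vert \leq G(\tau ,x)$; in the opposite case, monotonicity of $\varphi $ together with $\varphi (0)=0$ gives $\left\vert \varphi (w_{t}(\tau ,x))\right\vert \leq \left\vert \varphi (w_{t}(\tau +h\lambda ,x))\right\vert +\left\vert \varphi (-w_{t}(\tau +h\lambda ,x))\right\vert $, whence the product is at most $G(\tau +h\lambda ,x)$. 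This is precisely the point where the two-sided hypothesis involving both $\varphi (w_{t})$ and $\varphi (-w_{t})$ is indispensable, since $\varphi $ need not be odd; I would regard this (elementary) step as the main conceptual obstacle.

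Integrating this bound over $Q$ and over $\tau \in [\sigma ,\sigma +h]$, and using Fubini to exchange the order with the $\lambda $-average, I would arrive at $\int_{\sigma }^{\sigma +h}\int_{Q}\left\vert \varphi (w_{t})\right\vert \left\vert \frac{w(\cdot +h)-w(\cdot )}{h}\right\vert \,dx\,d\tau \leq \int_{\sigma }^{\sigma +h}\psi (\tau )\,d\tau +\int_{0}^{1}\int_{\sigma +h\lambda }^{\sigma +h+h\lambda }\psi (r)\,dr\,d\lambda $, where $\psi (\tau )=\left\Vert G(\tau )\right\Vert _{L^{1}(Q)}\in L^{1}(0,T)$. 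Each inner integral is the integral of the fixed function $\psi $ over an interval of length exactly $h$.

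Finally I would invoke absolute continuity of the Lebesgue integral: for $\psi \in L^{1}(0,T)$ the modulus $\omega (h)=\sup \{\int_{E}\psi :\left\vert E\right\vert \leq h\}$ tends to $0$ as $h\to 0$. Since every window above has length $h$, the right-hand side is bounded by $2\omega (h)$ independently of $\sigma \in [s,t]$, so $\max_{s\leq \sigma \leq t}(\cdots )\leq 2\omega (h)\to 0$. This yields that the $\liminf $ (indeed the full limit) equals $0$, which is the assertion. The only bookkeeping needed is that for $h$ small enough all shifted arguments $\tau +h\lambda $ and $\sigma +h+h\lambda $ remain inside $(0,T)$, which is guaranteed by $[s,t]\subset (0,T)$.
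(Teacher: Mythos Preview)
Your argument is correct and in fact establishes the full limit, not merely the $\liminf$. The route differs genuinely from the paper's. The paper controls the product $\varphi(w_t)\cdot\frac{w(\cdot+h)-w(\cdot)}{h}$ via the Young inequality $uv\le M_\varphi(u)+N_\varphi(v)$ for the Orlicz pair $M_\varphi(t)=\int_0^t\varphi$, $N_\varphi(t)=\int_0^t\varphi^{-1}$, after splitting according to the sign of $w_t$; the difference quotient is then handled by Jensen's inequality applied to the convex $M_\varphi$, and the resulting terms $N_\varphi(\varphi(w_t))\le \varphi(w_t)w_t$ and $M_\varphi(\pm|w_t|)\le |\varphi(\pm|w_t|)|\,|w_t|$ are bounded by the same $L^1$ quantity you call $G$, before the appeal to absolute continuity. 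Your comparison step $|\varphi(a)|\,|b|\le G(\tau,x)+G(\tau+h\lambda,x)$, obtained by a two-case argument using only monotonicity of $\varphi$ and $\varphi(0)=0$, replaces that Orlicz machinery with an elementary pointwise inequality. This is cleaner and avoids Jensen entirely; the paper's approach, on the other hand, reuses the $M_\varphi$, $N_\varphi$ framework already set up for the preceding lemma. Both reduce the problem, as you do, to absolute continuity of the integral of a fixed $L^1$ function over windows of length $|h|$.
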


\begin{proof}
By using techniques of the previous lemma, we find%
\begin{equation*}
\underset{\sigma }{\overset{\sigma +\left\vert h\right\vert }{\int }}%
\underset{Q}{\int }\left\vert \varphi (w_{t}(\tau ,x))\frac{w(\tau
+h,x)-w(\tau ,x)}{h}\right\vert dxd\tau
\end{equation*}%
\begin{equation*}
=\underset{\sigma }{\overset{\sigma +\left\vert h\right\vert }{\int }}%
\underset{\left\{ x:x\in Q,\text{ }w_{t}(\tau ,x)\geq 0\right\} }{\int }%
\varphi (w_{t}(\tau ,x))\left\vert \frac{w(\tau +h,x)-w(\tau ,x)}{h}%
\right\vert dxd\tau
\end{equation*}%
\begin{equation*}
-\underset{\sigma }{\overset{\sigma +\left\vert h\right\vert }{\int }}%
\underset{\left\{ x:x\in Q,\text{ }w_{t}(\tau ,x)<0\right\} }{\int }\varphi
(w_{t}(\tau ,x))\left\vert \frac{w(\tau +h,x)-w(\tau ,x)}{h}\right\vert
dxd\tau
\end{equation*}%
\begin{equation*}
\leq \underset{\sigma }{\overset{\sigma +\left\vert h\right\vert }{\int }}%
\underset{\left\{ x:x\in Q,\text{ }w_{t}(\tau ,x)\geq 0\right\} }{\int }%
N_{\varphi }(\varphi (w_{t}(\tau ,x)))dxd\tau
\end{equation*}%
\begin{equation*}
+\underset{\sigma }{\overset{\sigma +\left\vert h\right\vert }{\int }}%
\underset{\left\{ x:x\in Q,\text{ }w_{t}(\tau ,x)\geq 0\right\} }{\int }%
M_{\varphi }\left( \left\vert \frac{w(\tau +h,x)-w(\tau ,x)}{h}\right\vert
\right) dxd\tau
\end{equation*}%
\begin{equation*}
+\underset{\sigma }{\overset{\sigma +\left\vert h\right\vert }{\int }}%
\underset{\left\{ x:x\in Q,\text{ }w_{t}(\tau ,x)<0\right\} }{\int }%
N_{\varphi }(\varphi (w_{t}(\tau ,x)))dxd\tau
\end{equation*}%
\begin{equation*}
+\underset{\sigma }{\overset{\sigma +\left\vert h\right\vert }{\int }}%
\underset{\left\{ x:x\in Q,\text{ }w_{t}(\tau ,x)<0\right\} }{\int }%
M_{\varphi }\left( -\left\vert \frac{w(\tau +h,x)-w(\tau ,x)}{h}\right\vert
\right) dxd\tau
\end{equation*}%
\begin{equation*}
\leq \underset{\sigma }{\overset{\sigma +\left\vert h\right\vert }{\int }}%
\underset{Q}{\int }N_{\varphi }(\varphi (w_{t}(\tau ,x)))dxd\tau +\underset{%
\sigma }{\overset{\sigma +\left\vert h\right\vert }{\int }}\underset{Q}{\int
}\underset{0}{\overset{1}{\int }}M_{\varphi }(\left\vert w_{t}(\tau +h\mu
,x)\right\vert )d\mu dxd\tau
\end{equation*}%
\begin{equation*}
+\underset{\sigma }{\overset{\sigma +\left\vert h\right\vert }{\int }}%
\underset{Q}{\int }\underset{0}{\overset{1}{\int }}M_{\varphi }(-\left\vert
w_{t}(\tau +h\mu ,x)\right\vert )d\mu dxd\tau \leq \underset{\sigma }{%
\overset{\sigma +\left\vert h\right\vert }{\int }}\underset{Q}{\int }\varphi
(w_{t}(\tau ,x)))w_{t}(\tau ,x)dxd\tau
\end{equation*}%
\begin{equation*}
+\underset{0}{\overset{1}{\int }}\underset{\sigma +h\mu }{\overset{\sigma
+h\mu +\left\vert h\right\vert }{\int }}\underset{Q}{\int }\left( \varphi
(\left\vert w_{t}(\tau ,x)\right\vert )-\varphi (-\left\vert w_{t}(\tau
,x)\right\vert )\right) \left\vert w_{t}(\tau ,x)\right\vert dxd\tau d\mu .
\end{equation*}%
Since, by the conditions on $\varphi $,
\begin{equation*}
\varphi (\left\vert w_{t}(\tau ,x)\right\vert )-\varphi (-\left\vert
w_{t}(\tau ,x)\right\vert )=\left\vert \varphi (w_{t}(\tau ,x))\right\vert
+\left\vert \varphi (-w_{t}(\tau ,x))\right\vert
\end{equation*}%
from the above inequality, it follows that%
\begin{equation*}
\underset{\sigma }{\overset{\sigma +\left\vert h\right\vert }{\int }}%
\underset{Q}{\int }\left\vert \varphi (w_{t}(\tau ,x))\frac{w(\tau
+h,x)-w(\tau ,x)}{h}\right\vert dxd\tau \leq \underset{\sigma }{\overset{%
\sigma +h}{\int }}\underset{Q}{\int }\varphi (w_{t}(\tau ,x)))w_{t}(\tau
,x)dxd\tau
\end{equation*}%
\begin{equation*}
+\underset{0}{\overset{1}{\int }}\underset{\sigma +h\mu }{\overset{\sigma
+h\mu +\left\vert h\right\vert }{\int }}\underset{Q}{\int }\left( \left\vert
\varphi (w_{t}(\tau ,x))\right\vert +\left\vert \varphi (-w_{t}(\tau
,x))\right\vert \right) \left\vert w_{t}(\tau ,x)\right\vert dxd\tau d\mu .
\end{equation*}%
By the absolutely continuity property of the Lebesgue integral, we have
\begin{equation*}
\underset{\sigma }{\overset{\sigma +\left\vert h\right\vert }{\int }}%
\underset{Q}{\int }\varphi (w_{t}(\tau ,x)))w_{t}(\tau ,x)dxd\tau
\rightarrow 0\text{ as }h\rightarrow 0
\end{equation*}%
and
\begin{equation*}
\underset{\sigma +h\mu }{\overset{\sigma +h\mu +\left\vert h\right\vert }{%
\int }}\underset{Q}{\int }\left( \left\vert \varphi (w_{t}(\tau
,x))\right\vert +\left\vert \varphi (-w_{t}(\tau ,x))\right\vert \right)
\left\vert w_{t}(\tau ,x)\right\vert dxd\tau \rightarrow 0\text{ as }%
h\rightarrow 0\text{,}
\end{equation*}%
uniformly with respect to $\sigma \in \lbrack s,t]$ and $\mu \in \lbrack
0,1] $. These approximations, together with the last inequality, complete
the proof.
\end{proof}

\bigskip \textbf{Acknowledgments.} The author is grateful to the referee for
helpful suggestions. \bigskip

\end{document}